\newtheorem{problem}[theorem]{Problem}
\title{inverse obstacle scattering for Maxwell's equations in an
unbounded structure}
\author{Peijun Li\thanks{Department of Mathematics, Purdue University, West
Lafayette, Indiana 47907, USA ({\tt lipeijun@math.purdue.edu}).} \and Jue
Wang\thanks{School of Science, Harbin Engineering University, Harbin, 150001,
China ({\tt wangjue3721@163.com}). The research was supported in part by a
National Natural Science Foundation of China (No. 11801116) and Fundamental
Research Funds for the Central Universities (No. GK2110260213).} \and Lei
Zhang\thanks{School of Mathematical Sciences, Heilongjiang University, Harbin,
150080, China ({\tt zl19802003@163.com}). The work was supported partially
by a National Natural Science Foundation of China (No. 11871198), the Special
Funds of Science and Technology Innovation Talents of Harbin (No. 2017RAQXJ099)
and the Fundamental Research Funds for the Universities of Heilongjiang
Province - Heilongjiang University Special Fund Project (No. RCYJTD201804).}}
\begin{document}

\maketitle

\begin{abstract}
This paper is concerned with analysis of electromagnetic wave scattering by an
obstacle which is embedded in a two-layered lossy medium separated by an
unbounded rough surface. Given a dipole point source, the
direct problem is to determine the electromagnetic wave field for the
given obstacle and unbounded rough surface; the inverse problem is to
reconstruct simultaneously the obstacle and unbounded rough surface from the
electromagnetic field measured on a plane surface above the obstacle. For
the direct problem, a new boundary integral equation is proposed and its
well-posedness is established. The analysis is based on the exponential decay of
the dyadic Green function for Maxwell's equations in a lossy medium. For the
inverse problem, the global uniqueness is proved and a local stability
is discussed. A crucial step in the proof of the stability is to obtain the
existence and characterization of the domain derivative of the electric field
with respect to the shape of the obstacle and unbounded rough surface.
\end{abstract}

\begin{keywords}
Maxwell's equations, inverse scattering problem, unbounded rough
surface, domain derivative, uniqueness, local stability
\end{keywords}

\begin{AMS}
78A46, 78M30
\end{AMS}

\pagestyle{myheadings}
\thispagestyle{plain}
\markboth{P. Li, J. Wang, and L. Zhang}{Obstacle Scattering for Maxwell's
Equations}

\section{Introduction}

Consider the electromagnetic scattering of a dipole point source illumination by
an obstacle which is embedded in a two-layered medium separated by an unbounded
rough surface in three dimensions. An obstacle is referred to as an impenetrable
medium which has a bounded closed surface; an unbounded rough surface stands for
a nonlocal perturbation of an infinite plane surface such that the perturbed
surface lies within a finite distance of the original plane. Given the dipole
point source, the direct problem is to determine the electromagnetic wave field
for the known obstacle and unbounded rough surface; the inverse problem is to
reconstruct both of the obstacle and the unbounded rough surface, from the
measured wave field. The scattering problems arise from diverse scientific
areas such as radar and sonar, geophysical exploration, nondestructive testing,
and medical imaging. In particular, the obstacle scattering in unbounded
structures has significant applications in radar based object recognition above
the sea surface and detection of underwater or underground mines.

As a fundamental problem in scattering theory, the obstacle scattering problem,
where the obstacle is embedded in a homogeneous medium, has been examined
extensively by numerous researchers. The details can be found in the monographs
\cite{CK-83, N-00} and \cite{CC-05, CK-98, KG-08} on the mathematical and
numerical studies of the direct and inverse problems, respectively. The
unbounded rough surface scattering problems have also been widely examined in
both of the mathematical and engineering communities. We refer to
\cite{DM-JASA97, EG-WRM04, HLS-JCP14, M-JASA91, O-91, SS-WRM01, V-94, WC-WRM01,
ZMW-WM13} for various solution methods including mathematical, computational,
approximate, asymptotic, and statistical methods. The scattering problems in
unbounded structures are quite challenging due to two major issues: the usual
Silver--M\"{u}ller radiation condition is no longer valid; the Fredholm
alternative argument does not apply due to the lack of compactness result. The
mathematical analysis can be found in \cite{CZ-SIAP98, CM-SIMA05, LR-JAM10,
LS-M2AS12, ZC-M2AS03} and \cite{HL-SIMA11, LWZ-SIMA11, LZZ-M2AS16} on the
well-posedness of the two-dimensional Helmholtz equation and the
three-dimensional Maxwell equations, respectively. The inverse problems have
also been considered mathematically and computationally for unbounded rough
surfaces in \cite{BL-SJIS14, BZ-IP16, BZ-SJIS18, LZZ-SJIS18}.

In this paper, we study the electromagnetic obstacle scattering for the
three-dimensional Maxwell equations in an unbounded structure. Specifically, we
consider the illumination of a time-harmonic electromagnetic wave, generated
from a dipole point source, onto a perfectly electrically conducting obstacle
which is embedded in a two-layered medium separated by an unbounded rough
surface. The obstacle is located either above or below the surface and may have
multiple disjoint components. For simplicity of presentation, we assume that the
obstacle has only one component and is located above the surface. The free
spaces are assumed to be filled with some homogeneous and lossy materials
accounting for the energy absorption. The problem has received much attention
and many computational work have been done in the engineering community
\cite{J-OTL01, KB-TAP11, LGJW-GRS12}. However, the rigorous analysis is very
rare, especially for the three-dimensional Maxwell equations.

In this work, we introduce an energy decaying condition to replace the
Silver--M\"{u}ller radiation condition in order to ensure the uniqueness of the
solution. The asymptotic behaviour of dyadic Green's function is analyzed and
plays an important role in the analysis for the well-posedness of the direct
problem. A new boundary integral equation is proposed for the associated
boundary value problem. Based on some energy estimates, the uniqueness of the
solution for the scattering problem is established. For the inverse problem, we
intend to answer the following question: what information can we extract about
the obstacle and the unbounded rough surface from the tangential trace of the
electric field measured on the plane surface above the obstacle? The first
result is a global uniqueness theorem. We show that any two obstacles and
unbounded rough surfaces are identical if they generate the same data. The proof
is based on a combination of the Holmgren uniqueness, unique continuation, and a
construction of singular perturbation. The second result is concerned with a
local stability: if two obstacles are ``close'' and two unbounded rough surfaces
are also ``close'', then for any $\delta>0$, the measurements of the two
tangential trace of the electric fields being $\delta$-close implies that
both of the two obstacles and the two unbounded rough surfaces are $\mathcal
O(\delta)$-close. A crucial step in the stability proof is to obtain the
existence and characterization of the domain derivative of the electric field
with respect to the shape of the obstacle and unbounded rough surface.

The paper is organized as follows. In Section 2, we introduce the model problem
and present some asymptotic analysis for dyadic Green's function of the Maxwell
equations. Section 3 is devoted to the well-posedness of the direct scattering
problem. An equivalent integral representation is proposed for the boundary
value problem. A new boundary integral equation is developed and its
well-posedness is established. In Sections 4 and 5, we discuss the global
uniqueness and local stability of the inverse problem, respectively. The domain
derivative is studied. The paper is concluded with some general remarks in
Section 6.

\section{Problem formulation}

Let us first specify the problem geometry which is shown in Figure \ref{pg}.
Let $S$ be an unbounded rough surface given by
\[
S=\{\boldsymbol x=(x_{1},x_{2},x_{3})\in \mathbb{R}^{3}: x_{3}=f(x_{1},
x_{2})\},
\]
where $f\in C^2(\mathbb R^2)$. The surface $S$ divides $\mathbb R^3$
into $\Omega_1^+$ and $\Omega_2$, where
\begin{equation*}
\Omega_{1}^{+}=\{\boldsymbol{x}\in \mathbb{R}^{3}: x_{3}>f(x_{1}, x_{2})\},
\quad
\Omega_{2}=\{\boldsymbol{x}\in \mathbb{R}^{3}: x_{3}<f(x_{1},x_{2})\}.
\end{equation*}
Let $D$ be a bounded obstacle with $C^2$ boundary $\Gamma$. The obstacle is
assumed to be a perfect electrical conductor which is located either in
$\Omega_1^+$ or in $\Omega_2$. For instance, we may assume that
$D\subset\subset\Omega_1^+$. Define $\Omega_1=\Omega_1^+\setminus\overline D$.
The domain $\Omega_j$ is assumed to be filled with some homogeneous, isotropic,
and absorbing medium which may be characterized by the dielectric permittivity
$\varepsilon_j>0$, the magnetic permeability $\mu_j>0$, and the electric
conductivity $\sigma_j>0$, $j=1, 2$.

\begin{figure}
\centering
\includegraphics[width=0.6\textwidth]{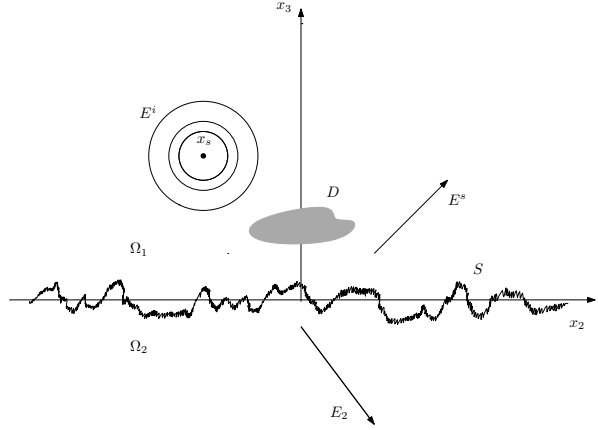}
\caption{Problem geometry of the obstacle scattering in an unbounded structure.}
\label{pg}
\end{figure}

In $\Omega_j$, the electromagnetic waves satisfy the time-harmonic Maxwell
equations (time dependence $e^{-\mathrm{i}\omega t}$):
\begin{equation*}\label{EHeq}
\begin{cases}
\nabla\times\boldsymbol{E}_{j}=\mathrm{i}\omega\mu_{j}\boldsymbol{H}_{j},\\
\nabla\times\boldsymbol{H}_{j}=-\mathrm{i}\omega\varepsilon_{j}\boldsymbol{E}_{j
}+\boldsymbol{J}_{j},\\
\nabla\cdot(\varepsilon_{j}\boldsymbol{E}_{j})=\rho_{j},\\
\nabla\cdot(\mu_{j}\boldsymbol{H}_{j})=0,
\end{cases}
\end{equation*}
where $\omega>0$ is the angular frequency, $\boldsymbol{E}_{j}$,
$\boldsymbol{H}_{j}$, $\boldsymbol{J}_{j}$ denote the electric field, the
magnetic field, the electric current density, respectively, and
$\rho_{j}=({\rm i}\omega)^{-1}\nabla\cdot\boldsymbol{J}_{j}$ is the
electric charge density. The external current source is assumed to be located in
$\Omega_1$. The relation between the electric current density and the electric
field is given by
\begin{align*}
\begin{cases}
\boldsymbol{J}_{1}=\sigma_{1}\boldsymbol{E}_{1}+\boldsymbol{J}_{cs} &\quad
\text{in} ~ \Omega_{1},\\
\boldsymbol{J}_{2}=\sigma_{2}\boldsymbol{E}_{2} &\quad
\text{in} ~ \Omega_{2},
\end{cases}
\end{align*}
where $\boldsymbol{J}_{cs}$ stands for the current source.

Using the above constitutive relation, we obtain coupled systems
\begin{equation}\label{EHeq1}
\begin{cases}
\nabla\times\boldsymbol{E}_{1}=\mathrm{i}\omega\mu_{1}\boldsymbol{H}_{1},\\
\nabla\times\boldsymbol{H}_{1}=-\mathrm{i}\omega\left(\varepsilon_{1}+\mathrm{i}
\frac{\sigma_{1}}{\omega}\right)\boldsymbol{E}_{1}+\boldsymbol{J}_{cs},\\
\left(\varepsilon_{1}+\mathrm{i}\frac{\sigma_{1}}{\omega}
\right)\nabla\cdot\boldsymbol{E}_{1}=\frac{1}{\mathrm{i}\omega}
\nabla\cdot\boldsymbol{J}_{cs},\\
\nabla\cdot(\mu_{1}\boldsymbol{H}_{1})=0,
\end{cases}
\quad\text{in}~\Omega_1,
\end{equation}
and
\begin{equation}\label{EHeq2}
\begin{cases}
\nabla\times\boldsymbol{E}_{2}=\mathrm{i}\omega\mu_{2}\boldsymbol{H}_{2},\\
\nabla\times\boldsymbol{H}_{2}=-\mathrm{i}\omega\left(\varepsilon_{2}+\mathrm{i}
\frac{\sigma_{2}}{\omega}\right)\boldsymbol{E}_{2}, \quad \quad \quad\\
\left(\varepsilon_{2}+\mathrm{i}\frac{\sigma_{2}}{\omega}
\right)\nabla\cdot\boldsymbol{E}_{2}=0,\\
\nabla\cdot(\mu_{2}\boldsymbol{H}_{2})=0,
\end{cases}
\quad\text{in}~ \Omega_2.
\end{equation}
Eliminating the magnetic field $\boldsymbol H_1$ in \eqref{EHeq1}, we obtain a
decoupled equation for the electric field $\boldsymbol E_1$:
\begin{equation}\label{Eeq1}
\nabla\times(\nabla\times\boldsymbol{E}_{1}(\boldsymbol{x}))
-\kappa_{1}^{2}\boldsymbol{E}_{1}(\boldsymbol{x})
=\mathrm{i}\omega\mu_{1}\boldsymbol{J}_{cs}(\boldsymbol{x}), \quad
\boldsymbol{x}\in\ \Omega_{1}.
\end{equation}
Similarly, it follows from \eqref{EHeq2} that we may deduce a decoupled Maxwell
system for the electric field $\boldsymbol E_2$:
\begin{equation}\label{Eeq2}
\nabla\times(\nabla\times\boldsymbol{E}_{2}(\boldsymbol{x}))
-\kappa_{2}^{2}\boldsymbol{E}_{2}(\boldsymbol{x})=0,\quad \boldsymbol{x}\in\
\Omega_{2}.
\end{equation}
Here $\kappa_{j}=\omega\sqrt{\left(\varepsilon_{j}+\mathrm{i}\frac{\sigma_{j}}{
\omega }\right)\mu_{j}}$ is the wave number in $\Omega_j, j=1, 2$.
Since $\varepsilon_j, \mu_j, \sigma_j$ are positive constants, $\kappa_j$ is a
complex constant with $\Re\kappa_j>0, \Im\kappa_j>0,$ which accounts for the
energy absorption.

By the perfect conductor assumption for the obstacle, it holds that
\begin{equation}\label{EBCS1}
\boldsymbol{\nu}_{\Gamma}\times{\boldsymbol{E}_{1}}=0\quad \text{on} ~ \Gamma,
\end{equation}
where $\boldsymbol{\nu}_{\Gamma}$ denotes the unit normal vector on the boundary
$\Gamma$ directed into the exterior of $D$. The usual continuity conditions
need to be imposed, i.e., the tangential traces of the electric and magnetic
fields are continuous across $S$:
\begin{equation}\label{EBCS}
\boldsymbol{\nu}_{S}\times{\boldsymbol{E}_{1}}=\boldsymbol{\nu}_{S}\times{
\boldsymbol{E}_{2}},
\quad \boldsymbol{\nu}_{S}\times{\boldsymbol{H}_{1}}=\boldsymbol{\nu}_{S}\times{
\boldsymbol{H}_{2}} \quad \text{on} ~ S,
\end{equation}
where $\boldsymbol{\nu}_{S}$ denotes the unit normal vector on $S$ pointing from
$\Omega_{2}$ to $\Omega_{1}$.

The incident electromagnetic fields
$(\boldsymbol{E}^{i}, \boldsymbol{H}^{i})$ satisfy Maxwell's equations
\begin{equation}\label{Eieq}
\begin{cases}
\nabla\times(\nabla\times\boldsymbol{E}^{i}(\boldsymbol{x}))
-\kappa_{1}^{2}\boldsymbol{E}^{i}(\boldsymbol{x})=\mathrm{i}\omega\mu_{1}
\boldsymbol{J}_{cs}(\boldsymbol{x}),\\
\nabla\times(\nabla\times\boldsymbol{H}^{i}(\boldsymbol{x}))
-\kappa_{1}^{2}\boldsymbol{H}^{i}(\boldsymbol{x})=\nabla\times\boldsymbol{J}_{cs
}(\boldsymbol{x}),
\end{cases}
\quad \boldsymbol{x}\in\Omega_1.
\end{equation}
In $\Omega_1$, the total electromagnetic fields
$(\boldsymbol{E}_{1},\boldsymbol{H}_{1})$
consist of the incident fields $(\boldsymbol{E}^{i},\boldsymbol{H}^{i})$ and the
scattered fields $(\boldsymbol{E}^{s},\boldsymbol{H}^{s})$. In $\Omega_{2}$, the
electromagnetic
fields $(\boldsymbol{E}_{2}(\boldsymbol{x}),\boldsymbol{H}_{2}(\boldsymbol{
x}))$ are called the transmitted fields.

In addition, we propose an energy decaying condition
\begin{equation}\label{EsIRC}
\lim_{r\to +\infty}\int_{\partial
B_{r}^{+}}|\boldsymbol{E}^{s}|^{2}{\rm d}s=0, \quad
\lim_{r\to +\infty}\int_{\partial
B_{r}^{+}}|\boldsymbol{H}^{s}|^{2}{\rm d}s=0
\end{equation}
and
\begin{equation}\label{E2IRC}
\lim_{r\to +\infty}\int_{\partial
B_{r}^{-}}|\boldsymbol{E}_{2}|^{2}{\rm d}s=0, \quad
\lim_{r\to +\infty}\int_{\partial
B_{r}^{-}}|\boldsymbol{H}_{2}|^{2}{\rm d}s=0,
\end{equation}
where $\partial B_{r}^{\pm}$ denotes the hemisphere of radius $r$ above or
below $S$.

The dyadic Green function is defined by the solution of the following equation
\begin{equation}\label{G1eq}
\nabla_{\boldsymbol{x}}\times(\nabla_{\boldsymbol{x}}\times\boldsymbol{G}_{j}
(\boldsymbol{x}-\boldsymbol{y}))
-\kappa_{j}^{2}\boldsymbol{G}_{j}(\boldsymbol{x} -\boldsymbol{y})
=\delta(\boldsymbol{x}-\boldsymbol{y})\boldsymbol{I}\quad \text{in} ~
\Omega_{j},
\end{equation}
where $\boldsymbol{I}$ is the unitary dyadic and $\delta$ is the Dirac delta
function. It is known that the dyadic Green function is given by
\begin{equation}\label{G}
\boldsymbol{G}_{j}(\boldsymbol{x}-\boldsymbol{y})
=\bigg[\boldsymbol{I}+\frac{
\nabla_{\boldsymbol{y}}\nabla_{\boldsymbol{y}}}{\kappa_{j}^{2}}\bigg]
\frac{\exp{(\mathrm{i}\kappa_{j}|\boldsymbol{x}-\boldsymbol{y}|)}}{
4\pi|\boldsymbol{x} -\boldsymbol{y}|}.
\end{equation}

We assume that the dipole point source is located at $\boldsymbol
x_s\in\Omega_1$ and has a polarization $\boldsymbol{q}\in
\mathbb{R}^{3}$, $|\boldsymbol{q}|=1$. Induced by this dipole point source, the
incident electromagnetic fields are
\begin{equation}\label{PSEi}
\boldsymbol{E}^{i}(\boldsymbol{x})=\boldsymbol{G}_{1}(\boldsymbol{x}-\boldsymbol
{x}_s)\boldsymbol{q},\quad \boldsymbol{H}^{i}(\boldsymbol{x}
)=\frac{1}{\mathrm{i}\omega\mu_{1}}(\nabla\times\boldsymbol{E}^{i}(\boldsymbol{x})),\quad\boldsymbol x\in \Omega_1.
\end{equation}
Hence the current source $\boldsymbol{J}_{cs}$ satisfies
\begin{equation*}\label{CS}
\mathrm{i}\omega\mu_{1}\boldsymbol{J}_{cs}(\boldsymbol{x}) =
\boldsymbol{q}\delta(\boldsymbol{x}-\boldsymbol{x}_s),
\quad\boldsymbol x\in\Omega_{1}.
\end{equation*}

Denote by $\mathcal{T}_{j}$ the set of functions $\boldsymbol{\psi}\in
C^{2} (\Omega_{j}) \cap C^{0,\alpha}(\overline{\Omega}_{j}), j=1, 2$.
The direct scattering problem can be stated as follows.

\begin{problem}\label{SP}
Given the incident field $\boldsymbol E^i$ in \eqref{PSEi}, the
direct  problem is to determine $\boldsymbol{E}^{s}\in\mathcal{T}_{1}$
and $\boldsymbol{E}_{2}\in\mathcal{T}_{2}$ such that

\begin{enumerate}[label=(\roman*)]

\item The electric fields $\boldsymbol{E}_{1}=\boldsymbol E^s+\boldsymbol E^i$
and $\boldsymbol{E}_{2}$ satisfy \eqref{Eeq1} and
\eqref{Eeq2}, respectively;

\item The electric field $\boldsymbol{E}_1$ satisfies the boundary condition
\eqref{EBCS1};

\item The electromagnetic fields $(\boldsymbol{E}_{j},
\boldsymbol{H}_{j}), j=1, 2$ satisfy \eqref{EBCS};

\item The scattered fields $(\boldsymbol E^s, \boldsymbol H^s)$ and the
transmitted fields $(\boldsymbol E_2, \boldsymbol H_2)$ satisfy the radiation
conditions \eqref{EsIRC} and \eqref{E2IRC}, respectively.

\end{enumerate}
\end{problem}

It requires to study the dyadic Green function in order to find the integral
representation of the solution for the scattering problem. The details may be
found in \cite{B-07} on the general properties of the dyadic Green function.

\begin{lemma}\label{EL02}
For each fixed $\boldsymbol{y}\in \Omega_{j}$, the dyadic Green function
$\boldsymbol{G}_{j}$ given in \eqref{G} admits the asymptotic
behaviour
\begin{align*}
\boldsymbol{G}_{j}(\boldsymbol{x}-\boldsymbol{y})&=\mathcal O\left(\frac{\exp{
(-\Im(\kappa_{j})|\boldsymbol{x}|)}}{|\boldsymbol{x}|}\right)\boldsymbol{\hat{I}
} \quad \text{as} \ |\boldsymbol{x}-\boldsymbol{y}|\to\infty,\\
\nabla_{\boldsymbol{x}}\times{\boldsymbol{G}_{j}(\boldsymbol{x}-\boldsymbol{y})
}&=\mathcal
O\left(\frac{\exp{(-\Im(\kappa_{j})|\boldsymbol{x}|)}}{|\boldsymbol{x } | }
\right)\boldsymbol{\hat{I}}\quad\text{as}\
|\boldsymbol{x}-\boldsymbol{y}|\to\infty,
\end{align*}
where $\boldsymbol{\hat{I}}:=\boldsymbol{\hat{e}}\boldsymbol{\hat{e}}$ and
$\boldsymbol{\hat{e}}=(1,1,1)^{\top}$.
\end{lemma}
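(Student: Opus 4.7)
The plan is to reduce everything to estimates on the scalar Helmholtz Green's function $g_j(\boldsymbol{x},\boldsymbol{y})=e^{\mathrm{i}\kappa_j r}/(4\pi r)$ with $r=|\boldsymbol{x}-\boldsymbol{y}|$, and then exploit two elementary observations: (a) because $\Im \kappa_j>0$, the factor $|e^{\mathrm{i}\kappa_j r}|=e^{-\Im(\kappa_j)r}$ dominates all algebraic powers of $r$; and (b) because $g_j$ is a function of $\boldsymbol{x}-\boldsymbol{y}$, one can freely trade $\nabla_{\boldsymbol{y}}$ for $-\nabla_{\boldsymbol{x}}$, which kills the second-derivative term in the curl by a curl-of-gradient argument.

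First I would compute, directly from $g_j=e^{\mathrm{i}\kappa_j r}/(4\pi r)$, the leading-order behaviour of $\nabla_{\boldsymbol{y}} g_j$ and $\nabla_{\boldsymbol{y}}\nabla_{\boldsymbol{y}} g_j$ as $r\to\infty$. Each $\boldsymbol{y}$-derivative either strikes the exponential, producing a factor $\mathrm{i}\kappa_j(\boldsymbol{x}-\boldsymbol{y})/r$ and preserving the $1/r$ decay, or it strikes an algebraic factor and yields a term of order $1/r^2$ or $1/r^3$. Thus both $\nabla_{\boldsymbol{y}} g_j$ and $\nabla_{\boldsymbol{y}}\nabla_{\boldsymbol{y}} g_j$ satisfy $|\cdot|=\mathcal{O}(r^{-1}e^{-\Im(\kappa_j)r})$. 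Substituting these estimates into the representation
\[
\boldsymbol{G}_j(\boldsymbol{x}-\boldsymbol{y})=g_j\boldsymbol{I}+\kappa_j^{-2}\nabla_{\boldsymbol{y}}\nabla_{\boldsymbol{y}} g_j
\]
gives the first asymptotic claim, once I replace $r$ by $|\boldsymbol{x}|$ using $||\boldsymbol{x}-\boldsymbol{y}|-|\boldsymbol{x}||\le|\boldsymbol{y}|$ (so $e^{-\Im(\kappa_j)r}$ and $e^{-\Im(\kappa_j)|\boldsymbol{x}|}$ differ by a constant depending on the fixed $\boldsymbol{y}$) and $r^{-1}=|\boldsymbol{x}|^{-1}(1+\mathcal{O}(|\boldsymbol{x}|^{-1}))$.

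For the curl, I would first note that since $g_j$ depends only on $\boldsymbol{x}-\boldsymbol{y}$, one has $\nabla_{\boldsymbol{y}}\nabla_{\boldsymbol{y}} g_j=\nabla_{\boldsymbol{x}}\nabla_{\boldsymbol{x}} g_j$, so that
\[
\boldsymbol{G}_j=g_j\boldsymbol{I}+\kappa_j^{-2}\nabla_{\boldsymbol{x}}\nabla_{\boldsymbol{x}} g_j.
\]
Taking $\nabla_{\boldsymbol{x}}\times$ entry-wise, the second term contributes $(\nabla_{\boldsymbol{x}}\times\nabla_{\boldsymbol{x}}\nabla_{\boldsymbol{x}} g_j)_{ij}=\sum_{k,l}\epsilon_{ikl}\partial_{x_k}\partial_{x_l}\partial_{x_j}g_j=0$ by the antisymmetry of $\epsilon_{ikl}$ against the symmetric pair $\partial_k\partial_l$. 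Hence $\nabla_{\boldsymbol{x}}\times\boldsymbol{G}_j=\nabla_{\boldsymbol{x}}\times(g_j\boldsymbol{I})$, whose entries are, up to signs, first-order partials of $g_j$. The asymptotic estimate for $\nabla g_j$ already established in the previous step then yields the second claim, again after swapping $r$ for $|\boldsymbol{x}|$.

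There is no serious obstacle here; the lemma is essentially a bookkeeping exercise in pointwise estimates. The only item requiring care is a clean way to see that the $\nabla_{\boldsymbol{y}}\nabla_{\boldsymbol{y}} g_j$ contribution to the curl vanishes identically, which the $\nabla_{\boldsymbol{y}}=-\nabla_{\boldsymbol{x}}$ identity handles in one line. Everything else reduces to chasing the factor $e^{\mathrm{i}\kappa_j r}/r$ through one and two derivatives and absorbing the inverse powers of $r$ into the $\mathcal{O}(\cdot)$ symbol.
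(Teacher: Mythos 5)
Your proof is correct, but it follows a different route from the paper's. The paper uses the classical far-field expansion $|\boldsymbol{x}-\boldsymbol{y}|=|\boldsymbol{x}|-\hat{\boldsymbol{x}}\cdot\boldsymbol{y}+\mathcal O(1/|\boldsymbol{x}|)$ to factor the scalar kernel as $\frac{e^{\mathrm{i}\kappa_j|\boldsymbol{x}|}}{4\pi|\boldsymbol{x}|}\{e^{-\mathrm{i}\kappa_j\hat{\boldsymbol{x}}\cdot\boldsymbol{y}}+\mathcal O(1/|\boldsymbol{x}|)\}$, then lets $\boldsymbol{I}+\kappa_j^{-2}\nabla_{\boldsymbol y}\nabla_{\boldsymbol y}$ act on the leading exponential to produce the explicit far-field amplitude $[\boldsymbol{I}-\hat{\boldsymbol{x}}\hat{\boldsymbol{x}}]e^{-\mathrm{i}\kappa_j\hat{\boldsymbol{x}}\cdot\boldsymbol{y}}$, and similarly obtains $\mathrm{i}\kappa_j\,\hat{\boldsymbol{x}}\times[\cdots]$ for the curl; the $\mathcal O$-bound then follows because $\Im\kappa_j>0$. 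You instead bound $\nabla_{\boldsymbol y}g_j$ and $\nabla_{\boldsymbol y}\nabla_{\boldsymbol y}g_j$ pointwise by brute-force Leibniz accounting, and dispose of the curl of the $\nabla\nabla$ term by the exact algebraic identity $\epsilon_{ikl}\partial_{x_k}\partial_{x_l}\partial_{x_m}g_j=0$, reducing $\nabla_{\boldsymbol x}\times\boldsymbol{G}_j$ to first derivatives of the scalar kernel. Your version is shorter and more elementary, and it sidesteps the uniformity-in-$\boldsymbol{y}$ issue of the asymptotic expansion entirely (which is harmless here since $\boldsymbol{y}$ is fixed); the paper's version costs a little more but delivers the explicit transversal far-field pattern, which is the standard object in scattering theory and is reused implicitly in the estimate \eqref{ET2-7}. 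Both arguments rest on the same essential fact that $\Im\kappa_j>0$ lets the exponential absorb all algebraic factors, and both correctly handle the passage from $|\boldsymbol{x}-\boldsymbol{y}|$ to $|\boldsymbol{x}|$ for fixed $\boldsymbol{y}$. The only cosmetic issue in your write-up is the clash between the medium index $j$ in $g_j$ and the tensor index $j$ in $\partial_{x_j}$, which you should rename.
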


\begin{proof}
Following
\[
|\boldsymbol{x}-\boldsymbol{y}|=\sqrt{|\boldsymbol{x}|^{2}-2\boldsymbol{x}
\cdot\boldsymbol{y}+|\boldsymbol{y}|^{2}}
=|\boldsymbol{x}|-\boldsymbol{\hat{x}}\cdot\boldsymbol{y}+\mathcal
O\left(\frac{1 } { |\boldsymbol{x}|}\right)\quad \text{as}\
|\boldsymbol{x}|\to\infty,
\]
where $\hat{\boldsymbol x}=\boldsymbol x/|\boldsymbol x|$, we have
\begin{align}\label{EL02-2}
\frac{\exp{(\mathrm{i}\kappa_{j}|\boldsymbol{x}-\boldsymbol{y}|)}}{|\boldsymbol{
x}-\boldsymbol{y}|} =&\frac{\exp{(\mathrm{i}\kappa_{j}|\boldsymbol{x}|)}}{
|\boldsymbol{x}|}\notag\\
&\quad \times\bigg\{\exp{(-\mathrm{i}\kappa_{j}\boldsymbol{\hat{x}}
\cdot\boldsymbol{y})} +\mathcal
O\left(\frac{1}{|\boldsymbol{x}|}\right)\bigg\}\quad \text{as}\
|\boldsymbol{x}|\to\infty
\end{align}
uniformly for all $\boldsymbol{y}$ satisfying
$|\boldsymbol{x}-\boldsymbol{y}|\rightarrow\infty$. By \eqref{EL02-2},
for $\Im\kappa_{j}>0$, we obtain for $|\boldsymbol x|\to\infty$ that
\begin{align*}
\boldsymbol{G}_{j}(\boldsymbol{x}-\boldsymbol{y})
&=\bigg[\boldsymbol{I}+\frac{\nabla_{\boldsymbol{y}}\nabla_{\boldsymbol{y}}}{
\kappa_{j}^{2}}\bigg]
\frac{\exp{(\mathrm{i}\kappa_{j}|\boldsymbol{x}|)}}{4\pi
|\boldsymbol{x}|}\bigg\{
\exp{(-\mathrm{i}\kappa_{j}\boldsymbol{\hat{x}}\cdot\boldsymbol{y})}
+\mathcal O\left(\frac{1}{|\boldsymbol{x}|}\right)\bigg\}\\
&=\frac{\exp{(\mathrm{i}\kappa_{j}|\boldsymbol{x}|)}}{4\pi|\boldsymbol{x}|}
\bigg\{[\boldsymbol{I}-\boldsymbol{\hat{x}}\boldsymbol{\hat{x}}]\exp{(-\mathrm{i
}\kappa_{j}\boldsymbol{\hat{x}}\cdot\boldsymbol{y})} +\mathcal
O\left(\frac{1}{|\boldsymbol{x}|}\right)\boldsymbol{\hat{I}}\bigg\}\\
&=\mathcal O\left(\frac{\exp{(-\Im{(\kappa_{j})}|\boldsymbol{x}|)}}{|\boldsymbol{x
}|}\right)\boldsymbol{\hat{I}}
\end{align*}
and
\begin{align*}
\nabla_{\boldsymbol{x}}\times\boldsymbol{G}_{j}(\boldsymbol{x}-\boldsymbol{y})
&=-\nabla_{\boldsymbol{y}}\times\boldsymbol{G}_{j}(\boldsymbol{x}-\boldsymbol{y}
)\\
&=\frac{\exp{(\mathrm{i}\kappa_{j}|\boldsymbol{x}|)}}{4\pi|\boldsymbol{x}|}
\bigg\{-\nabla_{\boldsymbol{y}}\times\bigg[(\boldsymbol{I}-\boldsymbol{\hat{x}}
\boldsymbol{\hat{x}})
\exp{(-\mathrm{i}\kappa_{j}\boldsymbol{\hat{x}}\cdot\boldsymbol{y})}\bigg]
+\mathcal O\left(\frac{1}{|\boldsymbol{x}|}\right)\boldsymbol{\hat{I}}\bigg\}\\
&=\mathrm{i}\kappa_{j}\frac{\exp{(\mathrm{i}\kappa_{j}|\boldsymbol{x}|)}}{
4\pi|\boldsymbol{x}|} \bigg\{\boldsymbol{\hat{x}}\times[(\boldsymbol{I}
-\boldsymbol{\hat{x}}
\boldsymbol{\hat{x}})\exp{(-\mathrm{i}\kappa_{j}\boldsymbol {\hat{x}}
\cdot\boldsymbol{y})}]
+\mathcal O\left(\frac{1}{|\boldsymbol{x}|}\right)\boldsymbol{\hat{I}}\bigg\}\\
&=\mathcal
O\left(\frac{\exp{(-\Im{(\kappa_{j})}|\boldsymbol{x}|)}}{|\boldsymbol{x } | }
\right)\boldsymbol{\hat{I}},
\end{align*}
which completes the proof.
\end{proof}

We introduce some Banach spaces. For
$V \subset \mathbb{R}^3$, denote by $BC(V)$ the set of bounded and
continuous functions on $V$, which is a Banach space under the norm
\[
\|\phi\|_{\infty}=\sup\limits_{x\in V}|\phi(x)|.
\]
For $0<\alpha\leq 1$, denote by $C^{0,\alpha}(V)$ the Banach
space of functions $\phi\in BC(V)$ which are uniformly
H\"{o}lder continuous with exponent $\alpha$. The norm
$\|\cdot\|_{C^{0,\alpha}(V)}$ is defined by
\[
\|\phi\|_{C^{0,\alpha}(V)}=\|\phi\|_{\infty}+\sup\limits_{\substack{x,y\in
V\\ x\neq y}}\frac{|\phi(x)-\phi(y)|}{|x-y|^{\alpha}}.
\]
Let $
C^{1,\alpha}(V)=\{\phi\in BC(V)\cap C^{1}(V) : \nabla\phi\in C^{0,\alpha}(V)\},
$
which is a Banach space under the norm
\[
\|\phi\|_{C^{1,\alpha}(V)}=\|\phi\|_{\infty}+\|\nabla\phi\|_{C^{0,\alpha}(V)}.
\]

\section{Well-posedness of the direct problem}

In this section, we show the existence and uniqueness of the solution to
Problem \ref{SP} by using the boundary integral equation method. First we derive
an integral representation for the solution of Problem \ref{SP} using dyadic
Green's theorem combined with the radiation conditions \eqref{EsIRC} and
\eqref{E2IRC}.

\begin{theorem}\label{ET1}
Let the fields $(\boldsymbol{E}_{1}, \boldsymbol{E}_{2})$ be the solution of
Problem \ref{SP}, then $(\boldsymbol{E}_{1}, \boldsymbol{E}_{2})$ have the
integral representations
\begin{align}\label{E1}
\boldsymbol{E}_{1}(\boldsymbol{x})
=\boldsymbol{E}^{i}(\boldsymbol{x}) + & \int_{S}
\big\{[\mathrm{i}\omega\mu_{1}\boldsymbol{G}_{1}(\boldsymbol{x}-\boldsymbol{y})]
\cdot[\boldsymbol{\nu}_{S}(\boldsymbol{y})\times\boldsymbol{H}_{1}(\boldsymbol{y
})]+\notag\\
&\qquad [\nabla_{\boldsymbol{x}}\times\boldsymbol{G}_{1}(\boldsymbol{x}
-\boldsymbol
{y})]\cdot[\boldsymbol{\nu}_{S}(\boldsymbol{y})\times\boldsymbol{E}_ {1}
(\boldsymbol{y })]\big\}{\rm d}s_{\boldsymbol{y}}\nonumber\\
 + & \int_{\Gamma} \big\{[\mathrm{i}\omega\mu_{1}\boldsymbol{G}_{1}(\boldsymbol
{x}-\boldsymbol{y})] \cdot[\boldsymbol{\nu}_{\Gamma}(\boldsymbol{y}
)\times\boldsymbol{H}_{1} (\boldsymbol{y})]\big\}{\rm d}s_{\boldsymbol{y}},
\quad\boldsymbol{x}\in \Omega_{1},
\end{align}
and
\begin{align}\label{E2}
\boldsymbol{E}_{2}(\boldsymbol{x})
= & -\int_{S}
\big\{[\mathrm{i}\omega\mu_{2}\boldsymbol{G}_{2}(\boldsymbol{x}-\boldsymbol{y})]
\cdot[\boldsymbol{\nu}_{S}(\boldsymbol{y})\times\boldsymbol{H}_{2}(\boldsymbol{y
})]\nonumber\\
&\qquad+[\nabla_{\boldsymbol{x}}\times\boldsymbol{G}_{2}(\boldsymbol{x}
-\boldsymbol{y})]
\cdot[\boldsymbol{\nu}_{\Gamma}(\boldsymbol{y})\times\boldsymbol{E}_{2}
(\boldsymbol{y})]\big\}{\rm d}s_{\boldsymbol{y}},\quad\boldsymbol{x}\in
\Omega_{2}.
\end{align}
\end{theorem}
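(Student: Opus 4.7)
I would apply the dyadic analogue of the second vector Green identity, column by column, to the pair $(\boldsymbol{E}_j,\boldsymbol{G}_j(\cdot-\boldsymbol{x}))$ on a carefully truncated domain, and then send the truncation parameters to their limits.

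\textbf{Derivation of \eqref{E1}.} Fix an evaluation point $\boldsymbol{x}\in\Omega_1$, choose $\rho>0$ small so that $\overline{B_\rho(\boldsymbol{x})}\subset\Omega_1$ and $\boldsymbol{x}_s\notin\overline{B_\rho(\boldsymbol{x})}$, and $r>0$ large so that $D\cup\{\boldsymbol{x},\boldsymbol{x}_s\}\subset B_r$. On the bounded domain $V_{r,\rho}=(\Omega_1\cap B_r)\setminus\overline{B_\rho(\boldsymbol{x})}$, whose boundary decomposes into $\Gamma$, $S\cap B_r$, the upper hemisphere $\partial B_r^+$, and $\partial B_\rho(\boldsymbol{x})$, apply the Green identity to $\boldsymbol{E}_1$ and each column of $\boldsymbol{G}_1(\cdot-\boldsymbol{x})$. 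Because $\boldsymbol{E}_1$ solves \eqref{Eeq1} with right-hand side $\boldsymbol{q}\,\delta(\cdot-\boldsymbol{x}_s)$ and $\boldsymbol{G}_1$ solves \eqref{G1eq}, the volume integrand vanishes on $V_{r,\rho}$. Sending $\rho\to0$ extracts $\boldsymbol{E}_1(\boldsymbol{x})$ from $\partial B_\rho(\boldsymbol{x})$ via the standard singularity analysis of $\boldsymbol{G}_1$ at the origin, while the source at $\boldsymbol{x}_s$ contributes $\boldsymbol{E}^i(\boldsymbol{x})$ to the right-hand side. The outward normals of $\Omega_1$ on $\Gamma$ and $S$ are $-\boldsymbol{\nu}_\Gamma$ and $-\boldsymbol{\nu}_S$; after accounting for these signs and invoking the perfect conductor condition \eqref{EBCS1} to kill the tangential electric trace on $\Gamma$, one obtains \eqref{E1}. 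Replacing $\boldsymbol{\nu}\times\nabla\times\boldsymbol{E}_1$ by $\mathrm{i}\omega\mu_1\,\boldsymbol{\nu}\times\boldsymbol{H}_1$ via \eqref{EHeq1} produces the factor of $\mathrm{i}\omega\mu_1$ appearing in the formula.

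\textbf{Passage to infinity and the formula for $\boldsymbol{E}_2$.} The critical step is the vanishing of the integrals over the hemispheres $\partial B_r^\pm$ as $r\to\infty$. Lemma \ref{EL02} yields
\[
|\boldsymbol{G}_j(\boldsymbol{x}-\boldsymbol{y})|+|\nabla_{\boldsymbol{y}}\times\boldsymbol{G}_j(\boldsymbol{x}-\boldsymbol{y})|=\mathcal O\!\left(\frac{\exp(-\Im(\kappa_j)|\boldsymbol{y}|)}{|\boldsymbol{y}|}\right)
\]
uniformly in $\boldsymbol{y}\in\partial B_r^\pm$, so Cauchy--Schwarz bounds the hemispheric integral by $C\,e^{-\Im(\kappa_j)r}\bigl(\|\boldsymbol{E}\|_{L^2(\partial B_r^\pm)}+\|\boldsymbol{H}\|_{L^2(\partial B_r^\pm)}\bigr)$, which tends to zero by \eqref{EsIRC} or \eqref{E2IRC}. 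For \eqref{E2}, the argument is identical on $(\Omega_2\cap B_r)\setminus\overline{B_\rho(\boldsymbol{x})}$: there is no source and no obstacle, so only the surface integral over $S$ survives, and the overall minus sign reflects that the outward normal of $\Omega_2$ on $S$ is $-\boldsymbol{\nu}_S$.

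\textbf{Main obstacle.} The algebra of the dyadic Green identity is routine. The substantive point, dictated by the unbounded structure, is that the scattered and transmitted fields satisfy only the weak $L^2$-type decay \eqref{EsIRC}--\eqref{E2IRC} rather than a Silver--M\"{u}ller condition; the compensation is the exponential decay supplied by $\Im\kappa_j>0$ through Lemma \ref{EL02}. These two ingredients must be combined through a Cauchy--Schwarz estimate to absorb the polynomial growth of the surface area of $\partial B_r^\pm$ and force the boundary-at-infinity terms to vanish. Once this coupling is in place, the representations \eqref{E1}--\eqref{E2} follow at once.
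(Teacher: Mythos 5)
Your proposal is correct and follows essentially the same route as the paper: the vector dyadic Green second identity on the truncated domain $B_r\cap\Omega_j$, the source term producing $\boldsymbol{E}^i$, the perfect conductor condition removing the tangential electric trace on $\Gamma$, and the hemisphere integrals killed by combining the exponential decay of Lemma \ref{EL02} with the $L^2$ radiation conditions \eqref{EsIRC}--\eqref{E2IRC} via Cauchy--Schwarz. The only cosmetic difference is that you excise a small ball around the evaluation point and recover $\boldsymbol{E}_1(\boldsymbol{x})$ in the limit $\rho\to 0$, whereas the paper invokes the delta identity directly; these are equivalent.
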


\begin{proof}
Let $B_r=\{\boldsymbol x\in\mathbb R^3: |\boldsymbol x|<r\}$. Denote
$\Omega_{r}=B_{r}\cap \Omega_{1}$ with the boundary $\partial
\Omega_{r}=\partial B_{r}^{+}\cup \Gamma\cup S_{r}$,
where $\partial B_{r}^{+}=\partial B_{r}\cap \Omega_{1}$ and $S_{r}=S\cap
B_{r}$. For each fixed $\boldsymbol{x}\in \Omega_{r}$,
applying the vector dyadic Green second theorem to $\boldsymbol{E}_{1}$ and
$\boldsymbol{G}_{1}$  in the region $\Omega_{r}$, we obtain
\begin{align}\label{ET1-1}
&\int_{\Omega_{r}}\{\boldsymbol{E}_{1}(\boldsymbol{y})
\cdot[\nabla_{\boldsymbol{y}}\times\nabla_{\boldsymbol{y}}\times\boldsymbol{G}_{
1}(\boldsymbol{y}-\boldsymbol{x})]
-[\nabla_{\boldsymbol{y}}\times\nabla_{\boldsymbol{y}}\times\boldsymbol{E}_{1}
(\boldsymbol{y})]\cdot\boldsymbol{G}_{1}(\boldsymbol{y}-\boldsymbol{x})\}
{\rm d}{\boldsymbol{y}}\nonumber\\
&=-\int_{\partial \Omega_{r}}
\{[\boldsymbol{\nu}(\boldsymbol{y})\times(\nabla_{\boldsymbol{y}}
\times\boldsymbol{E}_{1}(\boldsymbol{y}))]
\cdot\boldsymbol{G}_{1}(\boldsymbol{y}-\boldsymbol{x})\notag\\
&\hspace{1.7cm}
+[\boldsymbol{\nu}(\boldsymbol{y})\times\boldsymbol{E}_{1}(\boldsymbol{y} ) ]
\cdot[\nabla_{\boldsymbol{y}}\times\boldsymbol{G}_{1}(\boldsymbol{y}-\boldsymbol
{x})]\}{\rm d}s_{\boldsymbol{y}},
\end{align}
where $\boldsymbol{\nu}=\boldsymbol{\nu}(\boldsymbol{y})$ stands for the unit
normal vector at
$\boldsymbol{y}\in\partial \Omega_{r}$ pointing out of $\Omega_{r}$.

It follows from \eqref{Eeq1} and \eqref{G1eq} that
\begin{align*}
&\int_{\Omega_{r}}\{\boldsymbol{E}_{1}(\boldsymbol{y})
\cdot[\nabla_{\boldsymbol{y}}\times\nabla_{\boldsymbol{y}}\times\boldsymbol{G}_{
1}(\boldsymbol{y}-\boldsymbol{x})]
-[\nabla_{\boldsymbol{y}}\times\nabla_{\boldsymbol{y}}\times\boldsymbol{E}_{1}
(\boldsymbol{y})]\cdot\boldsymbol{G}_{1}(\boldsymbol{y}-\boldsymbol{x})\}
{\rm d}{\boldsymbol{y}}\\
&=\int_{\Omega_{r}}[\boldsymbol{E}_{1}(\boldsymbol{y})]\cdot[\nabla_{\boldsymbol
{y}}\times\nabla_{\boldsymbol{y}}\times\boldsymbol{G}_{1}(\boldsymbol{y}
-\boldsymbol{x})
-\kappa_{1}^{2}\boldsymbol{G}_{1}(\boldsymbol{y}-\boldsymbol{x})]{\rm d}{
\boldsymbol{y}}\\
&\qquad -\int_{\Omega_{r}}[\nabla_{\boldsymbol{y}}\times\nabla_{\boldsymbol{y}}
\times\boldsymbol{E}_{1}(\boldsymbol{y})
-\kappa_{1}^{2}\boldsymbol{E}_{1}(\boldsymbol{y})]\cdot[\boldsymbol{G}_{1}
(\boldsymbol{y}-\boldsymbol{x})]{\rm d}{\boldsymbol{y}}\nonumber\\
&=\int_{\Omega_{r}}[\boldsymbol{E}_{1}(\boldsymbol{y})\cdot(\delta(\boldsymbol{y
}-\boldsymbol{x})\boldsymbol{I} )]{\rm d}{\boldsymbol{y}}
-\int_{\Omega_{r}}[\mathrm{i}\omega\mu_{1}\boldsymbol{J}_{cs}(\boldsymbol{y}
)\cdot\boldsymbol{G}_{1}(\boldsymbol{y}-\boldsymbol{x})]{\rm
d}{\boldsymbol{y}}\\
&=\boldsymbol{E}_{1}(\boldsymbol{x})
-\int_{\Omega_{r}}[\mathrm{i}\omega\mu_{1}\boldsymbol{J}_{cs}(\boldsymbol{y}
)\cdot\boldsymbol{G}_{1}(\boldsymbol{y}-\boldsymbol{x})]{\rm
d}{\boldsymbol{y}},
\end{align*}
where
\begin{align}\label{ET1-3}
\lim_{r\to +\infty}\int_{\Omega_{r}}[\mathrm{i}\omega\mu_{1}\boldsymbol
{J}_{cs}(\boldsymbol{y})\cdot\boldsymbol{G}_{1}(\boldsymbol{y}-\boldsymbol{x})]{
\rm d} {\boldsymbol{y}}& =\int_{\Omega_{1}}[\boldsymbol{q}\delta(\boldsymbol{y}
-\boldsymbol{x}_s)\cdot\boldsymbol{G}_{1}(\boldsymbol{y}
-\boldsymbol{x})]{\rm d}{\boldsymbol{y}}\nonumber\\
&=\boldsymbol{G}_{1}(\boldsymbol{x}-\boldsymbol{x}_s)\boldsymbol{
q}=\boldsymbol{E}^{i}(\boldsymbol{x}).
\end{align}
Hence, letting $r\to +\infty$, with the aid of
\eqref{ET1-1}--\eqref{ET1-3}, we have
\begin{align}\label{ET1-4}
\boldsymbol{E}_{1}(\boldsymbol{x})-\boldsymbol{E}^{i}(\boldsymbol{x}
)&=-\int_{\partial \Omega_{1}}
\big\{[\boldsymbol{\nu}(\boldsymbol{y})\times(\nabla_{\boldsymbol{y}}
\times\boldsymbol{E}_{1}(\boldsymbol{y}))]
\cdot\boldsymbol{G}_{1}(\boldsymbol{y}-\boldsymbol{x})\notag\\
&\hspace{2cm} +[\boldsymbol{\nu}
(\boldsymbol{y})\times\boldsymbol{E}_{1}(\boldsymbol{y})]
\cdot[\nabla_{\boldsymbol{y}}\times\boldsymbol{G}_{1}(\boldsymbol{y}-\boldsymbol
{x})]\big\}{\rm d}s_{\boldsymbol{y}}\nonumber\\
&=-\left(\int_{S}+\int_{\Gamma}+\lim_{r\to +\infty}\int_{\partial
B_{r}^{+}}\right)
\big\{[\boldsymbol{\nu}(\boldsymbol{y})\times(\nabla_{\boldsymbol{y}}
\times\boldsymbol{E}_{1}(\boldsymbol{y}))]
\cdot\boldsymbol{G}_{1}(\boldsymbol{y}-\boldsymbol{x})\nonumber\\
&\hspace{2cm}+[\boldsymbol{
\nu}(\boldsymbol{y})\times\boldsymbol{E}_{1}(\boldsymbol{y})]
\cdot[\nabla_{\boldsymbol{y}}\times\boldsymbol{G}_{1}(\boldsymbol{y}-\boldsymbol
{x})]\big\}{\rm d}s_{\boldsymbol{y}}.
\end{align}
Following Lemma \ref{EL02} and \eqref{EsIRC}--\eqref{E2IRC}, we obtain for $r\to
+\infty$ that
\begin{align}\label{ET1-5}
&\bigg|\int_{\partial B_{r}^{+}}
\big\{[\boldsymbol{\nu}(\boldsymbol{y})\times(\nabla_{\boldsymbol{y}}
\times\boldsymbol{E}^{s}(\boldsymbol{y}))]
\cdot\boldsymbol{G}_{1}(\boldsymbol{y} -\boldsymbol{x})\notag\\
&\hspace{2cm} +[\boldsymbol{\nu}(\boldsymbol{y})\times\boldsymbol{E}^{s}
(\boldsymbol{y})] \cdot[\nabla_{\boldsymbol{y}}\times\boldsymbol{G}_{1}
(\boldsymbol{y}-\boldsymbol {x})]\big\}{\rm
d}s_{\boldsymbol{y}}\bigg|\notag\\
&\leq\bigg[\omega^{2}\mu_{1}^{2}\int_{\partial B_{r}^{+}}
|\boldsymbol{H}^{s}(\boldsymbol{y})|^{2}{\rm
d}s_{\boldsymbol{y}} \bigg]^{\frac{1}{2}}
\cdot\bigg[\int_{\partial B_{r}^{+}}
|\boldsymbol{G}_{1}(\boldsymbol{y}-\boldsymbol{x})|^{2}{\rm d}s_{\boldsymbol{y}}
\bigg ]^{\frac{1}{2}}\notag\\
&\hspace{2cm} +\bigg[\int_{\partial B_{r}^{+}}
|\boldsymbol{E}^{s}(\boldsymbol{y})|^{2}{\rm
d}s_{\boldsymbol{y}}\bigg]^{\frac{1}{ 2 } }
\cdot\bigg[\int_{\partial B_{r}^{+}}
|\nabla_{\boldsymbol{y}}\times\boldsymbol{G}_{1}(\boldsymbol{y}-\boldsymbol{x}
)|^{2}{ \rm d}s_{\boldsymbol{y}}\bigg]^{\frac{1}{2}}\to 0.
\end{align}
By Lemma \ref{EL02} and the definition of incident field $\boldsymbol E^i$, we
have for $r\to +\infty$ that
\begin{align}\label{ET1-6}
&\bigg|\int_{\partial B_{r}^{+}}
\big\{[\boldsymbol{\nu}(\boldsymbol{y})\times(\nabla_{\boldsymbol{y}}
\times\boldsymbol{E}^{i}(\boldsymbol{y}))]
\cdot\boldsymbol{G}_{1}(\boldsymbol{y}-\boldsymbol{x})\notag\\
&\hspace{2cm}+[\boldsymbol{\nu}
(\boldsymbol{y})\times\boldsymbol{E}^{i}(\boldsymbol{y})]
\cdot[\nabla_{\boldsymbol{y}}\times\boldsymbol{G}_{1}(\boldsymbol{y}-\boldsymbol
{x})]\big\}{\rm d}s_{\boldsymbol{y}}\bigg|\notag\\
&\leq\bigg[\omega^{2}\mu_{1}^{2}\int_{\partial B_{r}^{+}}
|\boldsymbol{H}^{i}(\boldsymbol{y})|^{2}{\rm
d}s_{\boldsymbol{y}} \bigg]^{\frac{1}{2}}
\cdot\bigg[\int_{\partial B_{r}^{+}}
|\boldsymbol{G}_{1}(\boldsymbol{y}-\boldsymbol{x})|^{2}{\rm d}s_{\boldsymbol{y}}
\bigg ]^{\frac{1}{2}}\notag\\
&\hspace{2cm} +\bigg[\int_{\partial B_{r}^{+}}
|\boldsymbol{E}^{i}(\boldsymbol{y})|^{2}{\rm
d}s_{\boldsymbol{y}}\bigg]^{\frac{1}{ 2 } }
\cdot\bigg[\int_{\partial B_{r}^{+}}
|\nabla_{\boldsymbol{y}}\times\boldsymbol{G}_{1}(\boldsymbol{y}-\boldsymbol{x}
)|^{2}{ \rm d}s_{\boldsymbol{y}}\bigg]^{\frac{1}{2}}\to 0.
\end{align}

Using \eqref{ET1-4}--\eqref{ET1-6} and conditions (ii), (iv) in Problem
\ref{SP}, and letting $r\to +\infty$, we have for each fixed $\boldsymbol{x}\in
\Omega_{1}$ that
\begin{align*}
\boldsymbol{E}_{1}(\boldsymbol{x})-\boldsymbol{E}^{i}(\boldsymbol{x})
=& -\int_{S}
\big\{[\boldsymbol{\nu}(\boldsymbol{y})\times(\nabla_{\boldsymbol{y}}
\times\boldsymbol{E}_{1}(\boldsymbol{y}))]
\cdot\boldsymbol{G}_{1}(\boldsymbol{y}
-\boldsymbol{x})\\
&\qquad +[\boldsymbol{\nu}(\boldsymbol{y})\times\boldsymbol{E}_{1}
(\boldsymbol{y})] \cdot[\nabla_{\boldsymbol{y}}\times\boldsymbol{G}_{1}
(\boldsymbol{y}-\boldsymbol {x})]\big\}{\rm d}s_{\boldsymbol{y}}\\
& -\int_{\Gamma}
\big\{[\boldsymbol{\nu}(\boldsymbol{y})\times(\nabla_{\boldsymbol{ y}}
\times\boldsymbol{E}_{1}(\boldsymbol{y}))]
\cdot\boldsymbol{G}_{1}(\boldsymbol{y} -\boldsymbol{x})\big\}{\rm
d}s_{\boldsymbol{y}}\\
=&\int_{S}\big\{[\mathrm{i}\omega\mu_{1}\boldsymbol{G}_{1}(\boldsymbol{x}
-\boldsymbol{y})]\cdot[\boldsymbol{\nu}_{S}(\boldsymbol{y})\times\boldsymbol{H}_
{1}(\boldsymbol{y})]\\
&\qquad +[\nabla_{\boldsymbol{x}}\times\boldsymbol{G}_{1}
(\boldsymbol{x}-\boldsymbol{y})]
\cdot[\boldsymbol{\nu}_{S}(\boldsymbol{y})\times\boldsymbol{E}_{1}(\boldsymbol{y
})]\big\}{\rm d}s_{\boldsymbol{y}}\\
&+\int_{\Gamma}\big\{[\mathrm{i}\omega\mu_{1}\boldsymbol{G}_{1}(\boldsymbol {x}
-\boldsymbol{y})] \cdot[\boldsymbol{\nu}_{\Gamma}(\boldsymbol{y}
)\times\boldsymbol{H}_{1}
(\boldsymbol{y})]\big\}{\rm d}s_{\boldsymbol{y}}.
\end{align*}
Similarly, for each fixed $\boldsymbol{x}\in \Omega_{2}$, we have
\begin{align*}
\boldsymbol{E}_{2}(\boldsymbol{x})
=&-\int_{S} \big\{[\boldsymbol{G}_{2}(\boldsymbol{x}-\boldsymbol{y})]
\cdot[\boldsymbol{\nu}_{S}
(\boldsymbol{y})\times(\nabla_{\boldsymbol{y}}\times\boldsymbol{E}_{2}
(\boldsymbol{y}))]\\
&\qquad+[\nabla_{\boldsymbol{x}}\times\boldsymbol{G}_{2}(\boldsymbol{
x}-\boldsymbol{y})]
\cdot[\boldsymbol{\nu}_{S}(\boldsymbol{y})\times\boldsymbol{E
}_{2}(\boldsymbol{y })]\big\}{\rm d}s_{\boldsymbol{y}}\\
=& -\int_{S}
\big\{[\mathrm{i}\omega\mu_{2}\boldsymbol{G}_{2}(\boldsymbol{x}-\boldsymbol{y})]
\cdot[\boldsymbol{\nu}_{S}(\boldsymbol{y})\times\boldsymbol{H}_{2}(\boldsymbol{y
})]\\
&\qquad+[\nabla_{\boldsymbol{x}}\times\boldsymbol{G}_{2}(\boldsymbol{x}
-\boldsymbol{y})]
\cdot[\boldsymbol{\nu}_{S}(\boldsymbol{y})\times\boldsymbol{E}_{2}(\boldsymbol{y
})]\big\}{\rm d}s_{\boldsymbol{y}},
\end{align*}
where
\begin{align*}
\boldsymbol{\nu}_{S}(\boldsymbol{y})\times\boldsymbol{E}_j(\boldsymbol{y})
&=\lim_{h\to +0}\boldsymbol{\nu}_{S}(\boldsymbol{y})\times\boldsymbol{E}
_j(\boldsymbol{y}+(-1)^{j}h\boldsymbol{\nu}_{S}(\boldsymbol{y})),\\
\boldsymbol{\nu}_{S}(\boldsymbol{y})\times[\nabla_{\boldsymbol{y}}
\times\boldsymbol{E}_j(\boldsymbol{y})]
&=\lim_{h\to
+0}\boldsymbol{\nu}_{S}(\boldsymbol{y})\times[\nabla_{\boldsymbol{y}}
\times\boldsymbol{E}_j(\boldsymbol{y}+(-1)^{j}h\boldsymbol{\nu}_{S}(\boldsymbol{y}))]
\end{align*}
are to be understood in the sense of uniform convergence on $S$, and $j=1,2$.

Finally, from the jump relations and \eqref{EBCS1}, we note that the integral
representations \eqref{E1}--\eqref{E2} lead to the boundary integral
equations:
\begin{align}\label{E1S}
\frac{1}{2}\boldsymbol{\nu}_{S}(\boldsymbol{x})\times\boldsymbol{E}_{1}
(\boldsymbol{x})=&\boldsymbol{\nu}_{S}(\boldsymbol{x})\times\boldsymbol{E}^{i}
(\boldsymbol{x})+\int_{S}
\big\{[\mathrm{i}\omega\mu_{1}\boldsymbol{\nu}_{S}(\boldsymbol{x}
)\times\boldsymbol {G}_{1}(\boldsymbol{x}-\boldsymbol{y})]
\cdot[\boldsymbol{\nu}_{S}(\boldsymbol{y})\times\boldsymbol{H}_{1}(\boldsymbol{y
})]\nonumber\\
&+[\boldsymbol{\nu}_{S}(\boldsymbol{x})\times(\nabla_{\boldsymbol{x}}
\times\boldsymbol{G}_{1}(\boldsymbol{x}-\boldsymbol{y}))]
\cdot[\boldsymbol{{\nu}}_{S}(\boldsymbol{y})\times\boldsymbol{E}_{1}(\boldsymbol
{y})]\big\}{\rm d}s_{\boldsymbol{y}}\nonumber\\
&+\int_{\Gamma}
\big\{[\mathrm{i}\omega\mu_{1}\boldsymbol{\nu}_{S}(\boldsymbol{x}
)\times\boldsymbol {G}_{1}(\boldsymbol{x}-\boldsymbol{y})]
\cdot[\boldsymbol{\nu}_{\Gamma}(\boldsymbol{y})\times\boldsymbol{H}_{1}
(\boldsymbol{y})]\big\}{\rm d}s_{\boldsymbol{y}},\quad \boldsymbol{x}\in S,
\end{align}
\begin{align}\label{E1Gam}
0=&\boldsymbol{\nu}_{\Gamma}(\boldsymbol{x})\times\boldsymbol{E}^{i}(\boldsymbol
{x})+\int_{S}
\big\{[\mathrm{i}\omega\mu_{1}\boldsymbol{\nu}_{\Gamma}(\boldsymbol{x}
)\times\boldsymbol{G}_{1}(\boldsymbol{x}-\boldsymbol{y})]
\cdot[\boldsymbol{\nu}_{S}(\boldsymbol{y})\times\boldsymbol{H}_{1}(\boldsymbol{y
})]\nonumber\\
&+[\boldsymbol{\nu}_{\Gamma}(\boldsymbol{x})\times(\nabla_{\boldsymbol
{x}}\times\boldsymbol{G}_{1}(\boldsymbol{x}-\boldsymbol{y}))]
\cdot[\boldsymbol{{\nu}}_{S}(\boldsymbol{y})\times\boldsymbol{E}_{1}(\boldsymbol
{y})]\big\}{\rm d}s_{\boldsymbol{y}}\nonumber\\
&+\int_{\Gamma}
\big\{[\mathrm{i}\omega\mu_{1}\boldsymbol{\nu}_{\Gamma}(\boldsymbol{x}
)\times\boldsymbol{G}_{1}(\boldsymbol{x}-\boldsymbol{y})]
\cdot[\boldsymbol{\nu}_{\Gamma}(\boldsymbol{y})\times\boldsymbol{H}_{1}
(\boldsymbol{y})]\big\}{\rm d}s_{\boldsymbol{y}},\quad \boldsymbol{x}\in
\Gamma,
\end{align}
and
\begin{align}\label{E2S}
\frac{1}{2}\boldsymbol{\nu}_{S}(\boldsymbol{x})\times\boldsymbol{E}_{2}
(\boldsymbol{x})=& -\int_{S}
\big\{[\mathrm{i}\omega\mu_{2}\boldsymbol{\nu}_{S}(\boldsymbol{x}
)\times\boldsymbol {G}_{2}(\boldsymbol{x}-\boldsymbol{y})]
\cdot[\boldsymbol{\nu}_{S}(\boldsymbol{y})\times\boldsymbol{H}_{2}(\boldsymbol{y
})]\nonumber\\
& +[\boldsymbol{\nu}_{S}(\boldsymbol{x})\times(\nabla_{\boldsymbol{x}}
\times\boldsymbol{G}_{2}(\boldsymbol{x}-\boldsymbol{y}))]
\cdot[\boldsymbol{\nu}_{S}(\boldsymbol{y})\times\boldsymbol{E}_{2}(\boldsymbol{y
})]\big\}{\rm d}s_{\boldsymbol{y}},\quad \boldsymbol{x}\in S.\quad
\end{align}
Hence, the electric fields $(\boldsymbol E_1, \boldsymbol E_2)$ satisfy the
boundary integral equations \eqref{E1S}--\eqref{E2S} and the continuity
conditions
\begin{equation}\label{E1bcs}
\boldsymbol{\nu}_{S}\times{\boldsymbol{E}_{1}}=\boldsymbol{\nu}_{S}\times{
\boldsymbol{E}_{2}}, \quad
\boldsymbol{\nu}_{S}\times\boldsymbol{H}_{1}=\boldsymbol{\nu}_{S}
\times\boldsymbol{H}_{2}\quad \text{on}\ S,
\end{equation}
which completes the proof.
\end{proof}

To show the well-posedness of the boundary integral equations
\eqref{E1S}--\eqref{E2S}, we introduce the normed subspace of continuous
tangential fields
\[
\mathfrak{T}(S):=\{\boldsymbol{\psi}\in
C(S): \boldsymbol{\nu}_{S}\cdot\boldsymbol{\psi}=0\},
\]
and the normed space of uniformly H\"{o}lder continuous tangential fields
\[
\mathfrak{T}^{0,\alpha}(S):=\{\boldsymbol{\psi}\in\mathfrak{T}(S)|\
\boldsymbol{\psi}\in C^{0,\alpha}(S)\}.
\]

We consider the integral operator $T :
\mathfrak{T}^{0,\alpha}(S) \to \mathfrak{T}^{0,\alpha}(S)$ defined by
\begin{align}\label{CP1}
(T\boldsymbol{\Psi})(\boldsymbol{x})&=\int_{S}
[\mathrm{i}\omega\mu_{1}\boldsymbol{\nu}_{S}(\boldsymbol{x})\times\boldsymbol{G}
_{1}(\boldsymbol{x}-\boldsymbol{y})]
\cdot[\boldsymbol{\Psi}(\boldsymbol{y})]
{\rm d}s_{\boldsymbol{y}}\nonumber\\
&=\int_{\mathbb R^2}
[\mathrm{i}\omega\mu_{1}(\boldsymbol{\nu}_{S}(\boldsymbol{x})\times\boldsymbol{G
}_{1}(\boldsymbol{x}-\boldsymbol{y}))
\cdot\boldsymbol{\Psi}(\boldsymbol{y})|_{y_{3}=f(y_{1}, y_{2})}]
(1+f_{y_{1}}^{2}+f_{y_{2}}^{2})^{1/2}
{\rm d}y_{1}{\rm d}y_{2},
\end{align}
and the integral operator $K : \mathfrak{T}^{0,\alpha}(S) \to
\mathfrak{T}^{0,\alpha}(S)$ defined by
\begin{align}\label{CP2}
(K\boldsymbol{\Phi})(\boldsymbol{x})&=\int_{S}[\boldsymbol{\nu}_{S}(\boldsymbol{
x})\times(\nabla_{\boldsymbol{x}}
\times\boldsymbol{G}_{1}(\boldsymbol{x}-\boldsymbol{y}))]
\cdot[\boldsymbol{\Phi}(\boldsymbol{y})]{\rm d}s_{\boldsymbol{y}}\nonumber\\
&=\int_{\mathbb R^2}
[\mathrm{i}\omega\mu_{1}(\boldsymbol{\nu}_{S}(\boldsymbol{x})\times\boldsymbol{G
}_{1}(\boldsymbol{x}-\boldsymbol{y}))
\cdot\boldsymbol{\Psi}(\boldsymbol{y})|_{y_{3}=f(y_{1}, y_{2})}]
(1+f_{y_{1}}^{2}+f_{y_{2}}^{2})^{1/2} {\rm d}y_{1}{\rm d}y_{2}.
\end{align}
For each $n\in\mathbb Z^+$, define the truncated operator $T_{n}:
\mathfrak{T}^{0,\alpha}(S_{n}) \to
\mathfrak{T}^{0,\alpha}(S_{n})$ by
\begin{align}\label{CPn1}
(T_{n}\boldsymbol{\Psi})(\boldsymbol{x})
=\int_{-n}^{n}\int_{-n}^{n}
[\mathrm{i}\omega\mu_{1}(\boldsymbol{\nu}_{S}(\boldsymbol{x})\times\boldsymbol{G
}_{1}(\boldsymbol{x}-\boldsymbol{y}))
\cdot\boldsymbol{\Psi}(\boldsymbol{y})|_{y_{3}=f(y_{1}, y_{2})}]\notag\\
\times(1+f_{y_{1}}^{2}+f_{y_{2}}^{2})^{1/2} {\rm d}y_{1}{\rm d}y_{2},
\end{align}
and the operator $K_{n}: \mathfrak{T}^{0,\alpha}(S_{n}) \to
\mathfrak{T}^{0,\alpha}(S_{n})$ by
\begin{align}\label{CPn2}
(K_{n}\boldsymbol{\Phi})(\boldsymbol{x}) =\int_{-n}^{n}\int_{-n}^{n}
[\mathrm{i}\omega\mu_{1}(\boldsymbol{\nu}_{S}(\boldsymbol{x})\times\boldsymbol{G
}_{1}(\boldsymbol{x}-\boldsymbol{y}))
\cdot\boldsymbol{\Psi}(\boldsymbol{y})|_{y_{3}=f(y_{1}, y_{2})}]\notag\\
\times(1+f_{y_{1}}^{2}+f_{y_{2}}^{2})^{1/2} {\rm d}y_{1}{\rm d}y_{2},
\end{align}
where $S_{n}=\{\boldsymbol{x}\in S : |x_{j}|\leq n,\ j=1,2\}$.

It follows from \cite[Theorems 2.32 and 2.33]{CK-83} that the integral
operators $T_{n}$ and $K_{n}$ are compact. We show that the
integral operators $T$ and $K$ are also compact. Hence the boundary integral
equations \eqref{E1S}--\eqref{E2S} are of the Fredholm type, i.e., the
existence of the solution follows immediately from the uniqueness of the
solution.

\begin{lemma}\label{EL01}
The integral operators $T$ and $K$  are compact.
\end{lemma}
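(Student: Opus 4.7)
The plan is to exhibit $T$ and $K$ as operator-norm limits of a sequence of compact operators and then invoke closedness of the compact ideal in $\mathcal{B}(\mathfrak{T}^{0,\alpha}(S))$. The natural candidates are extensions of the truncated operators $T_n$ and $K_n$, already known to be compact from \cite{CK-83}, to the full space $\mathfrak{T}^{0,\alpha}(S)$.

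Concretely, for each $n\in\mathbb{Z}^+$ I would define $\tilde{T}_n:\mathfrak{T}^{0,\alpha}(S)\to \mathfrak{T}^{0,\alpha}(S)$ via the same integrand as in \eqref{CP1} but with the $\boldsymbol{y}$-integration restricted to $S_n$, and $\tilde{K}_n$ analogously from \eqref{CP2}. Compactness of $\tilde{T}_n$ follows from that of $T_n$ on the bounded patch $S_n$ combined with an Arzel\`a--Ascoli argument on the exterior: for a bounded sequence $\boldsymbol{\Psi}_m$, the restrictions $\boldsymbol{\Psi}_m|_{S_n}$ are bounded in $\mathfrak{T}^{0,\alpha}(S_n)$, so a subsequence renders $T_n\boldsymbol{\Psi}_m$ convergent on $S_n$; for $\boldsymbol{x}\in S\setminus S_n$, since $\boldsymbol{y}$ ranges over the bounded set $S_n$, the exponential decay of $\boldsymbol{G}_1$ in Lemma \ref{EL02} forces $\tilde{T}_n\boldsymbol{\Psi}_m(\boldsymbol{x})$ to decay uniformly as $|\boldsymbol{x}|\to\infty$, while the smoothness of $\boldsymbol{G}_1$ off the diagonal delivers a uniform H\"older modulus.

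The decisive step is to establish $\|T-\tilde{T}_n\|_{\mathrm{op}}\to 0$. Writing
$$(T-\tilde{T}_n)\boldsymbol{\Psi}(\boldsymbol{x})=\int_{S\setminus S_n}[\mathrm{i}\omega\mu_1\boldsymbol{\nu}_S(\boldsymbol{x})\times\boldsymbol{G}_1(\boldsymbol{x}-\boldsymbol{y})]\cdot\boldsymbol{\Psi}(\boldsymbol{y})\,\mathrm{d}s_{\boldsymbol{y}},$$
Lemma \ref{EL02} bounds the kernel by $C e^{-\Im\kappa_1|\boldsymbol{x}-\boldsymbol{y}|}/|\boldsymbol{x}-\boldsymbol{y}|$. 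Parametrising $S$ by $(y_1,y_2)\in\mathbb{R}^2$ and using $f\in C^2(\mathbb{R}^2)$ to dominate $\mathrm{d}s_{\boldsymbol{y}}$ by a constant multiple of $\mathrm{d}y_1\,\mathrm{d}y_2$, the exponential weight makes the tail integral vanish as $n\to\infty$, and an analogous argument differentiating the kernel in $\boldsymbol{x}$ (with the matching exponential decay of $\nabla_{\boldsymbol{x}}\times\boldsymbol{G}_1$ from Lemma \ref{EL02}) controls the H\"older seminorm. The parallel treatment of $\tilde{K}_n-K$, with the kernel $\boldsymbol{\nu}_S(\boldsymbol{x})\times(\nabla_{\boldsymbol{x}}\times\boldsymbol{G}_1)$ replacing $\boldsymbol{\nu}_S(\boldsymbol{x})\times\boldsymbol{G}_1$, completes the argument for $K$.

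The main obstacle will be making the tail estimate genuinely uniform in $\boldsymbol{x}\in S$, including when $\boldsymbol{x}$ itself lies far from the origin so that points $\boldsymbol{y}\in S\setminus S_n$ may still be close to $\boldsymbol{x}$. The exponential decay of both $\boldsymbol{G}_1$ and its curl in $|\boldsymbol{x}-\boldsymbol{y}|$ provided by Lemma \ref{EL02}, combined with the finite amplitude and bounded derivatives of $f$, is precisely what delivers this uniformity and allows the operator-norm convergence to close the argument.
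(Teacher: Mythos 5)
Your strategy --- realise $T$ and $K$ as operator-norm limits of compactly truncated operators and invoke the closedness of the ideal of compact operators --- is exactly the paper's strategy, and your explicit extension of $T_n$ from $\mathfrak{T}^{0,\alpha}(S_n)$ to an operator $\tilde T_n$ acting on all of $\mathfrak{T}^{0,\alpha}(S)$ addresses a domain/codomain mismatch that the paper leaves implicit; in that respect you are more careful than the source.

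The difficulty you flag in your last paragraph, however, is not a technicality that the exponential decay disposes of; it is the crux, and your proposed resolution does not work. The bound from Lemma \ref{EL02} is only useful when $|\boldsymbol{x}-\boldsymbol{y}|$ is large. If $\boldsymbol{x}\in S$ lies well outside $S_n$ (say $|x_1|\geq 2n$), then $S\setminus S_n$ contains an entire surface neighbourhood of $\boldsymbol{x}$, and
\[
\int_{S\setminus S_n}\big|\boldsymbol{\nu}_S(\boldsymbol{x})\times\boldsymbol{G}_1(\boldsymbol{x}-\boldsymbol{y})\big|\,{\rm d}s_{\boldsymbol{y}}\;\geq\;\int_{S\cap B_1(\boldsymbol{x})}\big|\boldsymbol{\nu}_S(\boldsymbol{x})\times\boldsymbol{G}_1(\boldsymbol{x}-\boldsymbol{y})\big|\,{\rm d}s_{\boldsymbol{y}},
\]
which is bounded below by a positive constant independent of $n$ --- it is precisely the near-diagonal contribution that makes $T\boldsymbol{\Psi}(\boldsymbol{x})$ nontrivial in the first place. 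Testing $T-\tilde T_n$ against a unit-norm tangential density supported near such an $\boldsymbol{x}$ shows that $\|T-\tilde T_n\|$ does \emph{not} tend to zero: truncating in $\boldsymbol{y}$ alone cannot yield operator-norm convergence uniformly over $\boldsymbol{x}\in S$. You should be aware that the paper's own proof derives its tail estimates only ``for each fixed $\boldsymbol{x}\in S_n$'', with a constant that in fact degenerates as $\boldsymbol{x}$ approaches $\partial S_n$, before silently taking the supremum over all of $S$; so your sketch reproduces, rather than repairs, the weak point of the published argument. Closing the gap would require a different mechanism --- a kernel decomposition adapted to the observation point (locally singular part plus exponentially small far part), weighted spaces, or the limit-operator/local-compactness framework used elsewhere for rough-surface scattering --- and the flat-surface case, where $T$ reduces to a nonzero convolution operator on $\mathbb{R}^2$ whose compactness in sup-type norms is obstructed by translating a fixed bump to infinity, indicates that some such additional structure is unavoidable.
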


\begin{proof}
For each fixed $\boldsymbol{x}\in S_{n}$, it follows from \eqref{CP1} and
\eqref{CPn1} that
\begin{align}\label{EL01-1}
&(T\boldsymbol{\Psi})(\boldsymbol{x})
-(T_{n}\boldsymbol{\Psi})(\boldsymbol{x})\nonumber\\
&=\left(\int_{n}^{+\infty}\int_{-\infty}^{+\infty}+\int_{-\infty}^{-n}\int_{
-\infty}^{+\infty}
+\int_{-n}^{+n}\int_{-\infty}^{-n}+\int_{-n}^{+n}\int_{n}^{+\infty}
\right)\boldsymbol{\varphi}(\boldsymbol{x},y_{1},y_{2})
{\rm d}y_{1}{\rm d}y_{2}\nonumber\\
&=I_{1}+I_{2}+I_{3}+I_{4},
\end{align}
where
\begin{eqnarray*}
\boldsymbol{\varphi}(\boldsymbol{x},y_{1},y_{2}) =[\mathrm{i}\omega\mu_{1}
(\boldsymbol{\nu}_{S}(\boldsymbol{x})\times\boldsymbol{G}_{1}(\boldsymbol{x}
-\boldsymbol{y})) \cdot\boldsymbol{\Psi}(\boldsymbol{y})|_{y_{3}=f(y_{1},
y_{2})}] (1+f_{y_{1}}^{2}+f_{y_{2}}^{2})^{1/2}.
\end{eqnarray*}

By Lemma \ref{EL02}, we have for $n\rightarrow+\infty$ that
\begin{align}\label{EL01-2}
|I_{1}|\nonumber &\leq\int_{n}^{+\infty}\int_{-\infty}^{+\infty}
|\boldsymbol{\varphi}(\boldsymbol{x},y_{1},y_{2})|
{\rm d}y_{1}{\rm d}y_{2}\nonumber\\
&\leq C\int_{n}^{+\infty}\int_{-\infty}^{+\infty}
[|\boldsymbol{G}_{1}(\boldsymbol{x}-\boldsymbol{y})|
\cdot|\boldsymbol{\Psi}(\boldsymbol{y})|_{y_{3}=f(y_{1}, y_{2})}]
{\rm d}y_{1}{\rm d}y_{2}\nonumber\\
&\leq C\|\boldsymbol{\Psi}\|_{C^{0,\alpha}(S)}\int_{n}^{+\infty}\int_{-\infty}^{
+\infty} \bigg(\frac{\exp{(-\Im(\kappa_{1})|\boldsymbol{y}|)}}{|\boldsymbol{y}|}
\bigg|_{y_{3}=f(y_{1}, y_{2})}\bigg){\rm d}y_{1}{\rm d}y_{2}\nonumber\\
&\leq  C\|\boldsymbol{\Psi}\|_{C^{0,\alpha}(S)}
\int_{0}^{+\infty}\exp{\big(-\frac{1}{2}\Im(\kappa_{1})y_{1}\big)}{\rm
d}y_{1} \int_{n}^{+\infty}\frac{\exp{(-\frac{1}{2}\Im(\kappa_{
1})y_{2})}}{y_{2}}{\rm d}y_{2}\nonumber\\
&= C\|\boldsymbol{\Psi}\|_{C^{0,\alpha}(S)}
\left(\frac{2}{\Im(\kappa_{1})}\right)^{2}
\left(\frac{1}{n}\exp{\big(-\frac{n}{2}\Im(\kappa_{1})\big)}
\right)\rightarrow 0,
\end{align}
where $C$ is a positive constant and may change from step to step. Similarly,
we may show for $j=2,3,4$ that
\begin{eqnarray}\label{EL01-3}
|I_{j}|
\leq C\|\boldsymbol{\Psi}\|_{C^{0,\alpha}(S)}
\left(\frac{1}{n}\exp{\big(-\frac{n}{2}\Im(\kappa_{1})\big)}
\right)\rightarrow 0
\quad\text{as}\ n\rightarrow+\infty.
\end{eqnarray}
Combining \eqref{EL01-1}--\eqref{EL01-3} leads to
\begin{align*}
&|(T\boldsymbol{\Psi})(\boldsymbol{x})
-(T_{n}\boldsymbol{\Psi})(\boldsymbol{x})|
\leq \sum_{j=1}^{4}|I_{j}|\\
&\leq C\|\boldsymbol{\Psi}\|_{C^{0,\alpha}(S)}
\left(\frac{1}{n}\exp{\big(-\frac{n}{2}\Im(\kappa_{1})\big)}
\right)\rightarrow 0
\quad\text{as}\ n\rightarrow+\infty.
\end{align*}
Hence we have
\begin{eqnarray}\label{EL01-5}
\|(T-T_{n})\boldsymbol{\Psi}\|_{\infty}\leq
C\|\boldsymbol{\Psi}\|_{C^{0,\alpha}(S)}
\left(\frac{1}{n}\exp{\big(-\frac{n}{2}\Im{(\kappa_{1})}\big)}
\right)\rightarrow 0
\quad\text{as}\ n\rightarrow+\infty.
\end{eqnarray}

For each fixed $\boldsymbol{x}, \boldsymbol{\widetilde{x}}\in S_{n}$ and
$\boldsymbol{x}\neq\boldsymbol{\widetilde{x}}$,
it follows from \eqref{CP1} and \eqref{CPn1} that
\begin{align}\label{EL01-6}
&((T-T_{n})\boldsymbol{\Psi})(\boldsymbol{x})-((T-T_{n})\boldsymbol{\Psi}
)(\boldsymbol{\widetilde{x}})\nonumber\\
&=\left(\int_{n}^{+\infty}\int_{-\infty}^{+\infty}+\int_{-\infty}^{-n}\int_{
-\infty}^{+\infty}
+\int_{-n}^{+n}\int_{-\infty}^{-n}+\int_{-n}^{+n}\int_{n}^{+\infty}
\right)\notag\\
&\hspace{2cm}[\boldsymbol{\varphi}(\boldsymbol{x},y_{1},y_{2})-\boldsymbol{
\varphi }
(\boldsymbol{\widetilde{x}},y_{1},y_{2})]
{\rm d}y_{1}{\rm d}y_{2}\nonumber\\
&=I_{5}+I_{6}+I_{7}+I_{8}.
\end{align}
From Lemma \ref{EL02} and the mean value theorem, we get
\begin{eqnarray*}
|\boldsymbol{G}_{1}(\boldsymbol{x}-\boldsymbol{y})-\boldsymbol{G}_{1}
(\boldsymbol{\widetilde{x}}-\boldsymbol{y})|
\leq C\frac{\exp{(-\Im(\kappa_{1})|\boldsymbol{y}|)}}{|\boldsymbol{y}|}
|\boldsymbol{x }-\boldsymbol{\widetilde{x}}|.
\end{eqnarray*}
Therefore
\begin{align}\label{EL01-7}
|I_{5}|
&\leq\int_{n}^{+\infty}\int_{-\infty}^{+\infty}
|\boldsymbol{\varphi}(\boldsymbol{x},y_{1},y_{2})-\boldsymbol{\varphi}
(\boldsymbol{\widetilde{x}},y_{1},y_{2})|
{\rm d}y_{1}{\rm d}y_{2}\nonumber\\
&\leq C\int_{n}^{+\infty}\int_{-\infty}^{+\infty}
[|\boldsymbol{G}_{1}(\boldsymbol{x}-\boldsymbol{y})-\boldsymbol{G}_{1}
(\boldsymbol{\widetilde{x}}-\boldsymbol{y})|
\cdot|\boldsymbol{\Psi}(\boldsymbol{y})|_{y_{3}=f(y_{1}, y_{2})}]
{\rm d}y_{1}{\rm d}y_{2}\nonumber\\
&\leq
C(|\boldsymbol{x}-\boldsymbol{\widetilde{x}}|)\sup\limits_{\boldsymbol{y}\in
S}|\boldsymbol{\Psi}(\boldsymbol{y})|\int_{n}^{+\infty}\int_{-\infty}^{+\infty}
\bigg(\frac{\exp{(-\Im(\kappa_{1})|\boldsymbol{y}|)}}{|\boldsymbol{y}|}\bigg|_{
y_{3}=f(y_{1}, y_{2})}\bigg)
{\rm d}y_{1}{\rm d}y_{2}\nonumber\\
&\leq
C(|\boldsymbol{x}-\boldsymbol{\widetilde{x}}|)\|\boldsymbol{\Psi}\|_{C^{0,\alpha
}(S)}
\left(\frac{2}{\Im{(\kappa_{1})}}\right)^{2}
\left(\frac{1}{n}\exp{\left(-\frac{n}{2}\Im{(\kappa_{1})}\right)}\right).
\end{align}
Similarly, for $j=6,7,8$, we also have
\begin{eqnarray}\label{EL01-8}
|I_{j}|
\leq
C(|\boldsymbol{x}-\boldsymbol{\widetilde{x}}|)\|\boldsymbol{\Psi}\|_{C^{0,\alpha
}(S)}
\left(\frac{1}{n}\exp{\big(-\frac{n}{2}\Im{(\kappa_{1})}\big)}\right).
\end{eqnarray}
Combining \eqref{EL01-6}--\eqref{EL01-8} and noting
$0<\alpha\leq 1$, we obtain
\begin{align}\label{EL01-9}
&\frac{|((T-T_{n})\boldsymbol{\Psi})(\boldsymbol{x})-((T-T_{n})\boldsymbol{\Psi}
)(\boldsymbol{\widetilde{x}})|}
{|\boldsymbol{x}-\boldsymbol{\widetilde{x}}|^{\alpha}}\nonumber\\
&\leq \sum_{j=5}^{8}|I_{j}|
\leq
C(|\boldsymbol{x}-\boldsymbol{\widetilde{x}}|^{1-\alpha})\|\boldsymbol{\Psi}
\|_{C^{0,\alpha}(S)}
\left(\frac{1}{n}\exp{\big(-\frac{n}{2}\Im{(\kappa_{1})}\big)}
\right)\nonumber\\
&\leq C(n^{1-\alpha})\|\boldsymbol{\Psi}\|_{C^{0,\alpha}(S)}
\left(\frac{1}{n}\exp{\big(-\frac{n}{2}\Im{(\kappa_{1})}\big)}
\right)\nonumber\\
&= C\|\boldsymbol{\Psi}\|_{C^{0,\alpha}(S)}
\left(n^{-\alpha}\exp{\big(-\frac{n}{2}\Im{(\kappa_{1})}\big)}\right)
\rightarrow 0
\quad\text{as}\ n\rightarrow+\infty.
\end{align}

For $0<\alpha\leq 1$, it can be deduced from \eqref{EL01-5} and
\eqref{EL01-9} that
\begin{align*}
&\|T-T_{n}\|_{C^{0,\alpha}(S)}\\
&=\sup_{\|\boldsymbol{\Psi}\|_{C^{0,\alpha}(S)}\neq0}\frac{\|(T-T_{n}
)\boldsymbol{\Psi}\|_{C^{0,\alpha}(S)}}{\|\boldsymbol{\Psi}\|_{C^{0,\alpha}(S)}}
\\
&=\sup_{\|\boldsymbol{\Psi}\|_{C^{0,\alpha}(S)}\neq0}\frac{1}{\|\boldsymbol{\Psi
}\|_{C^{0,\alpha}(S)}}
\bigg[\|(T-T_{n})\boldsymbol{\Psi}\|_{\infty}\\
&\hspace{3cm}+\sup\limits_{\substack{
\boldsymbol
{x},\boldsymbol{\widetilde{x}}\in S\\
\boldsymbol{x}\neq\boldsymbol{\widetilde{x}}}}\frac{|((T-T_{n})\boldsymbol{\Psi}
)(\boldsymbol{x} )-((T-T_{n})\boldsymbol{\Psi})(\boldsymbol{\widetilde{x}})|}
{|\boldsymbol{x}-\boldsymbol{\widetilde{x}}|^{\alpha}}\bigg]\\
&\leq C \left(n^{-\alpha}\exp{\big(-\frac{n}{2}\Im{(\kappa_{1})}\big)}
\right)\rightarrow 0
\quad\text{as}\ n\rightarrow+\infty,
\end{align*}
which shows that the operator $T$ is compact on $\mathfrak{T}^{0,\alpha}(S)$.
Similarly, it can be shown from \eqref{CP2} and \eqref{CPn2} that operator $K$
is also compact on $\mathfrak{T}^{0,\alpha}(S)$.
\end{proof}

\begin{theorem}\label{ET2}
Let $\boldsymbol{E}^{s}\in \mathcal{T}_{1}, \boldsymbol{E}_{2}\in
\mathcal{T}_{2}$ have the integral representations \eqref{E1}--\eqref{E2}
and satisfy the boundary integral equations \eqref{E1S}--\eqref{E2S} with the
continuity conditions \eqref{E1bcs}. Then $(\boldsymbol{E}_{1},
\boldsymbol{E}_{2})$ are the solutions of
Problem \ref{SP}.
\end{theorem}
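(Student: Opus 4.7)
The plan is to verify conditions (i)--(iv) of Problem \ref{SP} directly from the integral representations \eqref{E1}--\eqref{E2}, the boundary integral equations \eqref{E1S}--\eqref{E2S}, and the continuity relations \eqref{E1bcs}, essentially reversing the derivation carried out in Theorem \ref{ET1}. Throughout, the defining property \eqref{G1eq} of the dyadic Green function and the jump relations for the vector single/double layer potentials will do most of the work.

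First, I would verify condition (i). For $\boldsymbol x\in \Omega_1$ with $\boldsymbol x\notin S\cup\Gamma$, the kernels $\boldsymbol{G}_1(\boldsymbol x-\boldsymbol y)$ and $\nabla_{\boldsymbol x}\times\boldsymbol{G}_1(\boldsymbol x-\boldsymbol y)$ appearing in \eqref{E1} are smooth in $\boldsymbol x$; differentiation under the integral sign is therefore legitimate. Applying $\nabla\times\nabla\times(\cdot)-\kappa_1^2(\cdot)$ to each integral and using \eqref{G1eq} kills the surface contributions, leaving only $\nabla\times\nabla\times \boldsymbol E^i-\kappa_1^2\boldsymbol E^i=\mathrm{i}\omega\mu_1\boldsymbol J_{cs}$ from \eqref{Eieq}, which is exactly \eqref{Eeq1}. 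The same computation applied to \eqref{E2} yields \eqref{Eeq2} in $\Omega_2$.

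Second, I would address conditions (ii) and (iii) via the jump relations. Letting $\boldsymbol x\in\Omega_1$ approach $\Gamma$, the vector double layer and magnetic dipole potentials in \eqref{E1} produce the usual $\pm\tfrac12$ jump in the tangential trace; matching these limits against \eqref{E1Gam} gives $\boldsymbol\nu_{\Gamma}\times\boldsymbol E_1=0$ on $\Gamma$, which is \eqref{EBCS1}. For condition (iii), I would take the tangential trace of \eqref{E1} as $\boldsymbol x\to S$ from $\Omega_1$ and of \eqref{E2} as $\boldsymbol x\to S$ from $\Omega_2$; combining the resulting identities with \eqref{E1S}, \eqref{E2S} and the assumed relation \eqref{E1bcs} yields $\boldsymbol\nu_S\times\boldsymbol E_1=\boldsymbol\nu_S\times\boldsymbol E_2$ on $S$. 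The continuity $\boldsymbol\nu_S\times\boldsymbol H_1=\boldsymbol\nu_S\times\boldsymbol H_2$ is part of \eqref{E1bcs}; applying $(\mathrm{i}\omega\mu_j)^{-1}\nabla\times$ to the integral representations recovers $\boldsymbol H_j$ in a manner consistent with this identity.

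Third, condition (iv) follows from the decay estimates of Lemma \ref{EL02}. Since $\Im\kappa_j>0$, both $\boldsymbol G_j(\boldsymbol x-\boldsymbol y)$ and $\nabla_{\boldsymbol x}\times\boldsymbol G_j(\boldsymbol x-\boldsymbol y)$ decay like $|\boldsymbol x|^{-1}\exp(-\Im(\kappa_j)|\boldsymbol x|)$ as $|\boldsymbol x|\to\infty$, uniformly for $\boldsymbol y$ in any bounded subset of $\Gamma$ or in a strip around $S$. Inserting these bounds into \eqref{E1}--\eqref{E2} and integrating over $\partial B_r^{\pm}$ gives the $L^2$-decay required in \eqref{EsIRC}--\eqref{E2IRC}; the magnetic fields inherit the same decay because $\boldsymbol H^s=(\mathrm{i}\omega\mu_1)^{-1}\nabla\times\boldsymbol E^s$ and analogously for $\boldsymbol H_2$.

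The step I expect to be most delicate is the second one: the kernels in \eqref{E1}--\eqref{E2} are only weakly singular for the electric single-layer part but strongly singular for the curl part, and the surface $S$ is unbounded, so the classical boundary jump relations for vector layer potentials must be transferred to this setting. My plan is to localize: write the portion of the integral over $S$ as the sum of a compactly supported piece near the target point $\boldsymbol x$ (for which the classical jump relations of \cite{CK-83} apply verbatim, thanks to $f\in C^2$) and a far-field remainder which is smooth across $S$ in a neighbourhood of $\boldsymbol x$ and contributes no jump. This reduces the trace analysis on $S$ to the standard bounded case, after which \eqref{E1S}, \eqref{E2S} and \eqref{E1bcs} can be assembled to yield the transmission conditions.
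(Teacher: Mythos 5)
Your proposal is correct and follows essentially the same route as the paper: condition (i) by differentiating under the integral sign and invoking \eqref{G1eq} and \eqref{Eieq}, condition (ii) from \eqref{E1Gam} via the jump relations on $\Gamma$, condition (iii) being already contained in the hypothesis \eqref{E1bcs}, and condition (iv) from the exponential decay in Lemma \ref{EL02}. The one place where the paper is more explicit than you is the radiation condition, where it truncates $S$ into the pieces $S_n$ to control the integral of the kernel over the \emph{unbounded} surface (cf.\ \eqref{ET2-4}--\eqref{ET2-6}) before concluding $|\boldsymbol{E}^{s}(\boldsymbol{x})|=\mathcal O\big(|\boldsymbol{x}|^{-1}\exp(-\tfrac{1}{2}\Im(\kappa_{1})|\boldsymbol{x}|)\big)$; your appeal to uniform decay for $\boldsymbol{y}$ in a ``strip around $S$'' should be sharpened in the same way, since that strip is unbounded.
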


\begin{proof}
We only show the proof for the field $\boldsymbol{E}_{1}$. If the field
$\boldsymbol{E}^{s}\in \mathcal{T}_{1}$ has the integral representation
\eqref{E1}, then we
have
\begin{align}\label{ET2-1}
\boldsymbol{E}^{s}(\boldsymbol{x})
&=\int_{S}
\big\{[\mathrm{i}\omega\mu_{1}\boldsymbol{G}_{1}(\boldsymbol{x}-\boldsymbol{y})]
\cdot[\boldsymbol{\nu}_{S}(\boldsymbol{y})\times\boldsymbol{H}_{1}(\boldsymbol{y
})]\notag\\
&\qquad+[\nabla_{\boldsymbol{x}}\times\boldsymbol{G}_{1}(\boldsymbol{x}
-\boldsymbol { y } )
]\cdot[\boldsymbol{\nu}_{S}(\boldsymbol{y})\times\boldsymbol{E}_{1}(\boldsymbol{
y})]\big\}{\rm d}s_{\boldsymbol{y}}\nonumber\\
& +\int_{\Gamma}
\big\{[\mathrm{i}\omega\mu_{1}\boldsymbol{G}_{1}(\boldsymbol{x}-\boldsymbol{y})]
\cdot[\boldsymbol{\nu}_{\Gamma}(\boldsymbol{y})\times\boldsymbol{H}_{1}
(\boldsymbol{y})]\big\}{\rm d}s_{\boldsymbol{y}},
\quad\boldsymbol{x}\in \Omega_{1}.
\end{align}
It is easy to verify
that $\boldsymbol{\nu}_{\Gamma}(\boldsymbol{y})\times\boldsymbol{E}_{1}
(\boldsymbol{y })\big|_{\Gamma}=0$, i.e., $\boldsymbol E_1=\boldsymbol
E^s+\boldsymbol E^i$ satisfies the boundary condition (ii) of Problem \ref{SP}.

Noting that for any $\boldsymbol{x}\in \Omega_{1}$ and
$\boldsymbol{y}\in S\cup \Gamma$, we have
$\boldsymbol{x}\neq\boldsymbol{y}$. Taking double curl of
\eqref{ET2-1}, multiplying \eqref{ET2-1} by
$-\kappa_{1}^{2}=-\omega^{2}\mu_{1}(\varepsilon_{1}+\mathrm{i}\frac{\sigma_{1}}{
\omega})$, and adding the resulting two equations with the aid of \eqref{G1eq},
we obtain
\begin{align}\label{ET2-2}
&\nabla\times(\nabla\times\boldsymbol{E}^{s}(\boldsymbol{x}))
-\kappa_{1}^{2}\boldsymbol{E}^{s}(\boldsymbol{x})\nonumber\\
=&\int_{S}
\big\{\mathrm{i}\omega\mu_{1}[\nabla_{\boldsymbol{x}}\times\nabla_{\boldsymbol{x
} } \times\boldsymbol{G}_{1}(\boldsymbol{x}-\boldsymbol{y})
-\kappa_{1}^{2}\boldsymbol{G}_{1}(\boldsymbol{x}-\boldsymbol{y})]
\cdot[\boldsymbol{\nu}_{S}(\boldsymbol{y})\times\boldsymbol{H}_{1}(\boldsymbol{y
})]\nonumber\\
&+[\nabla_{\boldsymbol{x}}\times(\nabla_{\boldsymbol{x}}\times\nabla_{
\boldsymbol{x}}\times\boldsymbol{G}_{1}(\boldsymbol{x}-\boldsymbol{y})
-\kappa_{1}^{2}\boldsymbol{G}_{1}(\boldsymbol{x}-\boldsymbol{y}))]
\cdot[\boldsymbol{\nu}_{S}(\boldsymbol{y})\times\boldsymbol{E}_{1}(\boldsymbol{y
})]\big\}{\rm d}s_{\boldsymbol{y}}\nonumber\\
& +\int_{\Gamma}
\big\{\mathrm{i}\omega\mu_{1}[\nabla_{\boldsymbol{x}}\times\nabla_{\boldsymbol{x
} } \times\boldsymbol{G}_{1}(\boldsymbol{x}-\boldsymbol{y})
-\kappa_{1}^{2}\boldsymbol{G}_{1}(\boldsymbol{x}-\boldsymbol{y})]
\cdot[\boldsymbol{\nu}_{\Gamma}(\boldsymbol{y})\times\boldsymbol{H}_{1}
(\boldsymbol{y})]\big\}{\rm d}s_{\boldsymbol{y}}\nonumber\\
=&0,\quad\boldsymbol{x}\in \Omega_{1}.
\end{align}
It follows from \eqref{Eieq} and \eqref{ET2-2} that
\begin{align*}
&\nabla\times(\nabla\times\boldsymbol{E}_{1}(\boldsymbol{x}))
-\kappa_{1}^{2}\boldsymbol{E}_{1}(\boldsymbol{x})\\
=&[\nabla\times\nabla\times\boldsymbol{E}^{s}(\boldsymbol{x})
-\kappa_{1}^{2}\boldsymbol{E}^{s}(\boldsymbol{x})]
+[\nabla\times\nabla\times\boldsymbol{E}^{i}(\boldsymbol{x})
-\kappa_{1}^{2}\boldsymbol{E}^{i}(\boldsymbol{x})]\\
=&\mathrm{i}
\omega\mu_{1}\boldsymbol{J}_{cs}(\boldsymbol{x}), \quad\boldsymbol{x}\in\
\Omega_{1}.
\end{align*}
Furthermore, with the help of Lemma \ref{EL01} and \eqref{ET2-1},
we deduce that
\begin{align}\label{ET2-4}
|\boldsymbol{E}^{s}(\boldsymbol{x})|
\leq & C\bigg[\int_{S}
|\boldsymbol{G}_{1}(\boldsymbol{x}-\boldsymbol{y})|
\cdot|\boldsymbol{\nu}_{S}(\boldsymbol{y})\times\boldsymbol{H}_{1}(\boldsymbol{y
})|{\rm d}s_{\boldsymbol{y}}\notag\\
& \qquad
+\int_{S}|\nabla_{\boldsymbol{x}}\times\boldsymbol{G}_{1}(\boldsymbol{x}
-\boldsymbol{y})|
\cdot|\boldsymbol{\nu}_{S}(\boldsymbol{y})\times\boldsymbol{E}_{1}(\boldsymbol{y
})|{\rm d}s_{\boldsymbol{y}}\nonumber\\
&\qquad +\int_{\Gamma}
|\boldsymbol{G}_{1}(\boldsymbol{x}-\boldsymbol{y})|
\cdot|\boldsymbol{\nu}_{\Gamma}(\boldsymbol{y})\times\boldsymbol{H}_{1}
(\boldsymbol{y})|{\rm d}s_{\boldsymbol{y}}\bigg]\nonumber\\
\leq & C\bigg[
\|\boldsymbol{\nu}_{S}\times\boldsymbol{H}_{1}\|_{C^{0,\alpha}(S)}\int_{S}
|\boldsymbol{G}_{1}(\boldsymbol{x}-\boldsymbol{y})|{\rm
d}s_{\boldsymbol{y}}\notag\\
&\qquad
+\|\boldsymbol{\nu}_{S}\times\boldsymbol{E}_{1}\|_{C^{0,\alpha}(S)}\int_{ S }
|\nabla_{\boldsymbol{x}}\times\boldsymbol{G}_{1}(\boldsymbol{x}-\boldsymbol{y})|
{\rm d}s_{\boldsymbol{y}}\nonumber\\
&\quad \quad
+\|\boldsymbol{\nu}_{\Gamma}\times\boldsymbol{H}_{1}\|_{C^{0,\alpha}(\Gamma)}
\int_{\Gamma}
|\boldsymbol{G}_{1}(\boldsymbol{x}-\boldsymbol{y})|
{\rm d}s_{\boldsymbol{y}}\bigg]\nonumber\\
\leq & C\bigg[\lim_{n\rightarrow+\infty}\int_{S_{n}}
|\boldsymbol{G}_{1}(\boldsymbol{x}-\boldsymbol{y})|{\rm d}s_{\boldsymbol{y}}
+\lim_{n\rightarrow+\infty}\int_{S_{n}}|\nabla_{\boldsymbol{x}}\times\boldsymbol
{G}_{1}(\boldsymbol{x}-\boldsymbol{y})|
{\rm d}s_{\boldsymbol{y}}\notag\\
&\qquad +\int_{\Gamma}
|\boldsymbol{G}_{1}(\boldsymbol{x}-\boldsymbol{y})|
{\rm d}s_{\boldsymbol{y}}\bigg].
\end{align}
For each fixed $n\geq 1$, as $|\boldsymbol{x}|\to +\infty$, by Lemma \ref{EL02},
we have
\begin{align}\label{ET2-5}
\int_{S_{n}}|\boldsymbol{G}_{1}(\boldsymbol{x}-\boldsymbol{y})|{\rm
d}s_{\boldsymbol{y}}&\leq C\int_{S_{n}}\bigg|\frac{\exp{(\frac{1}{2}\mathrm{i}
\kappa_{1}|\boldsymbol{x} -\boldsymbol{y}|)}}{|\boldsymbol{
x}-\boldsymbol{y}|}\exp{\big(\frac{1}{2}\mathrm{i}\kappa_{1}|\boldsymbol{x}
-\boldsymbol{y}|\big)}\bigg|{\rm d}s_{\boldsymbol{y}}
\nonumber\\
&\leq
C\frac{\exp{(-\frac{1}{2}\Im(\kappa_{1})|\boldsymbol{x}|)}}{|\boldsymbol{x}|}
\int_{S_{n}} \exp{\big(-\frac{1}{2}\Im(\kappa_{1})|\boldsymbol{y}|\big)}{\rm
d}s_{\boldsymbol{y}}\nonumber\\
&\leq
C\frac{\exp{(-\frac{1}{2}\Im(\kappa_{1})|\boldsymbol{x}|)}}{|\boldsymbol{x}|}
\left(\int_{0}^{n} \exp{\big(-\frac{1}{4}\Im(\kappa_{1})y_{1}\big)}{\rm d}
y_{1}\right)^{2}\nonumber\\
&\leq
C \frac{\exp{(-\frac{1}{2}\Im(\kappa_{1})|\boldsymbol{x}|)}}{|\boldsymbol{x}|}
\left(1-\exp{\big(-\frac{n}{4}\Im{(\kappa_{1})}\big)}\right)^{2}
\end{align}
and
\begin{align}\label{ET2-6}
\int_{S_{n}}
|\nabla_{\boldsymbol{x}}\times\boldsymbol{G}_{1}(\boldsymbol{x}-\boldsymbol{y})|
{\rm d}s_{\boldsymbol{y}}
\leq C
\frac{\exp{(-\frac{1}{2}\Im(\kappa_{1})|\boldsymbol{x}|)}}{|\boldsymbol{x}|}
\left(1-
\exp{\big(-\frac{n}{4}\Im(\kappa_{1})\big)}\right)^{2}.
\end{align}
Similarly, we can obtain
\begin{align}\label{ET2-7}
\int_{\Gamma}
|\boldsymbol{G}_{1}(\boldsymbol{x}-\boldsymbol{y})|{\rm d}s_{\boldsymbol{y}}
&\leq C \frac{\exp{(-\Im(\kappa_{1})|\boldsymbol{x}|)}}{|\boldsymbol{x}|}
\int_{\Gamma}
\bigg|\exp{(-\mathrm{i}\kappa_{j}\boldsymbol{\hat{x}}\cdot\boldsymbol{y})}[
\boldsymbol{I}-\boldsymbol{\hat{x}}\boldsymbol{\hat{x}}]
+\mathcal O\left(\frac{1}{|\boldsymbol{x}|}\right)\boldsymbol{\hat{I}}\bigg|{\rm
d}s_{\boldsymbol{y}}\nonumber\\
&\leq C \frac{\exp{(-\Im(\kappa_{1})|\boldsymbol{x}|)}}{|\boldsymbol{x}|}.
\end{align}
Combining \eqref{ET2-4}--\eqref{ET2-7}, we have for $\Im{(\kappa_{1})}>0$ that
\[
|\boldsymbol{E}^{s}(\boldsymbol{x})|=\mathcal
O\left(\frac{\exp{(-\frac{1}{2}\Im{(\kappa_{1})}
|\boldsymbol{x}|)}}{|\boldsymbol{x}|}\right)\quad \text{as}\
|\boldsymbol{x}|\to +\infty
\]
and
\begin{align*}
\int_{\partial
B_{r}^{+}}|\boldsymbol{E}^{s}|^{2}{\rm d}{s_{\boldsymbol{x}}}
&\leq C \int_{\partial
B_{r}^{+}}\frac{\exp{(-\Im(\kappa_{1})|\boldsymbol{x}|)}}{|\boldsymbol{x}|^{2}}
{\rm d}{s_{\boldsymbol{x}}}\\
&\leq
C\left(\frac{\exp{(-\Im{(\kappa_{1})}r)}}{r^{2}}4\pi{r^{2}}\right)=
C\exp{(-\Im{(\kappa_{1})}r)}\rightarrow 0\quad \text{as}\ r\to +\infty,
\end{align*}
where $C$ is a positive constant independent of $r$.

Similarly, we can also show that
\[
\lim_{r\to +\infty}\int_{\partial
B_{r}^{+}}|\boldsymbol{H}^{s}|^{2}{\rm d}{s_{\boldsymbol{x}}}
=\lim_{r\to +\infty}\int_{\partial
B_{r}^{-}}|\boldsymbol{E}_{2}|^{2}{\rm d}{s_{\boldsymbol{x}}}
=\lim_{r\to +\infty}\int_{\partial
B_{r}^{-}}|\boldsymbol{H}_{2}|^{2}{\rm d}{s_{\boldsymbol{x}}}=0,
\]
which complete the proof.
\end{proof}

It can be seen from Theorems \ref{ET1} and \ref{ET2} that there exits a
solution of Problem \ref{SP} by using the boundary integral
equation method. To prove the uniqueness, it suffices to show that
$\boldsymbol{E}_{1}=\boldsymbol{E}^{s}$ and
$\boldsymbol{E}_{2}$ vanish identically in $\overline{\Omega}_{1}$ and
$\overline{\Omega}_{2}$ if $\boldsymbol{E}^{i}=0$.
For the sake of brevity  for the proof, we consider the homogeneous Maxwell's
equations
\begin{equation}\label{Eeq01}
\nabla\times(\nabla\times\boldsymbol{E}_j)
-\kappa_j^{2}\boldsymbol{E}_j=0 \quad \text{in}\ \Omega_j,
\end{equation}
along with the boundary condition
\begin{equation}\label{EBCS01}
\boldsymbol{\nu}_{\Gamma}\times{\boldsymbol{E}_{1}}=0\quad \text{on}\ \Gamma,
\end{equation}
and the continuity conditions
\begin{equation}\label{EBCS0}
\boldsymbol{\nu}_{S}\times{\boldsymbol{E}_{1}}=\boldsymbol{\nu}_{S}\times{
\boldsymbol{E}_{2}}, \quad
\boldsymbol{\nu}_{S}\times\boldsymbol{H}_{1}=\boldsymbol{\nu}_{S}
\times\boldsymbol{H}_{2}\quad \text{on}\ S,
\end{equation}
and the radiation conditions
\begin{align}\label{EsIRC0}
&\lim_{r\to +\infty}\int_{\partial B_{r}^{+}}|\boldsymbol{E}_{1}|^{2}{\rm
d}s=\lim_{r\to +\infty}\int_{\partial
B_{r}^{+}}|\boldsymbol{H}_{1}|^{2}{\rm d}s\notag\\
= &\lim_{r\rightarrow+\infty}\int_{\partial
B_{r}^{-}}|\boldsymbol{E}_{2}|^{2}{\rm
d}s=\lim_{r\rightarrow+\infty}\int_{\partial
B_{r}^{-}}|\boldsymbol{H}_{2}|^{2}{\rm d}s=0.
\end{align}

\begin{theorem}\label{ET3}
Let $(\boldsymbol{E}_{1}, \boldsymbol{E}_{2})$ be the solutions of
the problem \eqref{Eeq01}--\eqref{EsIRC0}. Then $(\boldsymbol{E}_{1},
\boldsymbol{E}_{2})$ vanish identically.
\end{theorem}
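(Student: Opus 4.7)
The plan is an energy argument that exploits the lossy nature of the media: $\Im\kappa_j^2 = \omega\mu_j\sigma_j>0$. First I would apply the vector Green identity (in the same fashion as the proof of Theorem \ref{ET1}) to $\boldsymbol{E}_j$ on the truncated domain $\Omega_{j,r} = \Omega_j \cap B_r$. Using $\nabla\cdot((\nabla\times\boldsymbol{E}_j)\times\overline{\boldsymbol{E}_j}) = \overline{\boldsymbol{E}_j}\cdot(\nabla\times\nabla\times\boldsymbol{E}_j) - |\nabla\times\boldsymbol{E}_j|^2$ together with \eqref{Eeq01} and $\nabla\times\boldsymbol{E}_j = \mathrm{i}\omega\mu_j\boldsymbol{H}_j$, this produces
\[
\int_{\Omega_{j,r}} |\nabla\times\boldsymbol{E}_j|^2\, dV - \kappa_j^2 \int_{\Omega_{j,r}} |\boldsymbol{E}_j|^2\, dV = -\mathrm{i}\omega\mu_j \int_{\partial\Omega_{j,r}} \overline{\boldsymbol{E}_j}\cdot(\boldsymbol{\nu}\times\boldsymbol{H}_j)\, ds.
\]
I would then divide by $\mu_j$ and sum the identities for $j=1,2$.

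The boundary $\partial\Omega_{j,r}$ decomposes into $S_r$, the spherical cap $\partial B_r^{\pm}$, and (for $j=1$) the obstacle boundary $\Gamma$. On $\Gamma$, the condition \eqref{EBCS01} gives $\boldsymbol{\nu}_\Gamma\times\overline{\boldsymbol{E}_1}=0$, so $\overline{\boldsymbol{E}_1}\cdot(\boldsymbol{\nu}_\Gamma\times\boldsymbol{H}_1) = \boldsymbol{H}_1\cdot(\overline{\boldsymbol{E}_1}\times\boldsymbol{\nu}_\Gamma) = 0$ pointwise. On $\partial B_r^{\pm}$, Cauchy--Schwarz combined with \eqref{EsIRC0} yields
\[
\bigg|\int_{\partial B_r^{\pm}} \overline{\boldsymbol{E}_j}\cdot(\boldsymbol{\nu}\times\boldsymbol{H}_j)\, ds\bigg| \le \Big(\int_{\partial B_r^{\pm}}|\boldsymbol{E}_j|^2 ds\Big)^{1/2} \Big(\int_{\partial B_r^{\pm}}|\boldsymbol{H}_j|^2 ds\Big)^{1/2} \to 0 \quad\text{as } r\to\infty.
\]
On $S_r$ the outer normals to $\Omega_{1,r}$ and $\Omega_{2,r}$ are $-\boldsymbol{\nu}_S$ and $+\boldsymbol{\nu}_S$ respectively, so after dividing by $\mu_j$ and summing, the $S_r$ contribution equals $\mathrm{i}\omega\int_{S_r}\big[\overline{\boldsymbol{E}_1}\cdot(\boldsymbol{\nu}_S\times\boldsymbol{H}_1) - \overline{\boldsymbol{E}_2}\cdot(\boldsymbol{\nu}_S\times\boldsymbol{H}_2)\big]\, ds$. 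Since $\boldsymbol{\nu}_S\times\boldsymbol{H}_j$ is tangential to $S$ and equal for $j=1,2$ by the second transmission condition in \eqref{EBCS0}, while the tangential traces of $\overline{\boldsymbol{E}_j}$ coincide by the first, the integrand vanishes pointwise and the combined $S_r$ term is zero.

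Taking imaginary parts of the summed identity and using $\Im\kappa_j^2 = \omega\mu_j\sigma_j>0$, I arrive at
\[
\omega\sigma_1 \int_{\Omega_{1,r}}|\boldsymbol{E}_1|^2\, dV + \omega\sigma_2 \int_{\Omega_{2,r}}|\boldsymbol{E}_2|^2\, dV = -\Im\big(\text{hemispherical boundary terms at radius } r\big),
\]
and the right-hand side tends to $0$ as $r\to\infty$ by the Cauchy--Schwarz bound above. The left-hand side is non-negative and monotone non-decreasing in $r$, so having limit zero forces it to vanish identically; hence $\int_{\Omega_{j,r}}|\boldsymbol{E}_j|^2\, dV = 0$ for every $r$ and both $j$. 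Consequently $\boldsymbol{E}_j \equiv 0$ in $\Omega_j$, and the $C^{0,\alpha}(\overline{\Omega}_j)$ regularity inherent in $\mathcal{T}_j$ extends the vanishing to the closures, as desired.

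The step I expect to be the main obstacle is the \emph{exact} cancellation on $S_r$: because $S$ is unbounded, no $L^2$-type decay of the traces on $S_r$ is available as $r\to\infty$, so only pointwise cancellation on $S$ can save the argument. This must come from a delicate interaction between both transmission conditions in \eqref{EBCS0} and the opposite outward-normal orientations of $\Omega_1$ and $\Omega_2$ along $S$; tangentiality of $\boldsymbol{\nu}_S\times\boldsymbol{H}_j$ is essential so that only the tangential components of $\overline{\boldsymbol{E}_j}$ — which match across $S$ — contribute.
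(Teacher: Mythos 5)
Your proposal is correct and follows essentially the same route as the paper's proof: the vector Green identity on truncated domains, the radiation conditions \eqref{EsIRC0} to remove the hemispherical terms, the PEC condition \eqref{EBCS01} to remove the $\Gamma$ term, pointwise cancellation of the $S$ contributions via both transmission conditions in \eqref{EBCS0}, and the sign of $\Im\kappa_j^2=\omega\mu_j\sigma_j>0$ to conclude. The only (cosmetic) differences are that you conjugate $\boldsymbol{E}_j$ rather than $\boldsymbol{H}_j$ in the flux term and cancel the two $S_r$ integrals at finite $r$ before letting $r\to\infty$, whereas the paper passes to the limit in each half first and then subtracts the resulting integrals over all of $S$; your ordering is, if anything, slightly more careful about the convergence of the surface integral over the unbounded $S$.
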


\begin{proof}
Denote $\Omega_{r}=(B_{r}\cap \Omega_{1})$ with boundary $\partial
\Omega_{r}=\partial B_{r}^{+}\cup \Gamma\cup S_{r}$,
where $\partial B_{r}^{+}=\partial B_{r}\cap \Omega_{1}$ and
$S_{r}=S\cap B_{r}$.
For each fixed $\boldsymbol{x}\in \Omega_{r}$,
applying the vector Green first theorem to $\boldsymbol{E}_{1}$ in
$\Omega_{r}$, we have
\begin{align}\label{ET3-1}
&\int_{\Omega_{r}}\big[|\nabla\times\boldsymbol{E}_{1}|^{2}
-\boldsymbol{E}_{1}\cdot(\nabla\times\nabla\times\overline{\boldsymbol{E}}_{1}
)\big]
{\rm d}{\boldsymbol{x}}\nonumber\\
= & \int_{\partial
\Omega_{r}}\boldsymbol{\nu}\cdot[\boldsymbol{E}_{1}
\times(\nabla\times\overline{\boldsymbol{E}}_{1})] {\rm
d}{s_{\boldsymbol{x}}}
= \int_{\partial
\Omega_{r}}(\nabla\times\overline{\boldsymbol{E}}_{1})\cdot[\boldsymbol{\nu}
\times\boldsymbol{E}_{1}] {\rm d}{s_{\boldsymbol{x}}}\nonumber\\
= & \mathrm{i}\omega\mu_{1}\left(\int_{\partial
B_{r}^{+}}+\int_{\Gamma}+\int_{S_{r}}\right)
\overline{\boldsymbol{H}}_{1}\cdot(\boldsymbol{\nu}\times\boldsymbol{E}_{1})
{\rm d}{s_{\boldsymbol{x}}},
\end{align}
where $\boldsymbol{\nu}=\boldsymbol{\nu}(\boldsymbol{x})$ stands for the unit
normal vector at
$\boldsymbol{x}\in\partial \Omega_{r}$ pointing out of $\Omega_{r}$.
Letting $r\to +\infty$, we have from \eqref{EBCS01}, \eqref{EsIRC0}, and
\eqref{ET3-1} that
\begin{eqnarray}\label{ET3-2}
\int_{\Omega_{1}}\big[|\nabla\times\boldsymbol{E}_{1}|^{2}
-\boldsymbol{E}_{1}\cdot(\nabla\times\nabla\times\overline{\boldsymbol{E}}_{1}
)\big]{\rm d}{\boldsymbol{x}}
=-\mathrm{i}\omega\mu_{1}\int_{S}
\overline{\boldsymbol{H}}_{1}\cdot(\boldsymbol{\nu}_{S}\times\boldsymbol{E}_{1})
{\rm d}{s_{\boldsymbol{x}}},
\end{eqnarray}
where $\boldsymbol{\nu}_{S}=\boldsymbol{\nu}_{S}(\boldsymbol{x})$
denotes the unit normal vector at $\boldsymbol{x} \in S$ pointing from
region $\Omega_{2}$ to region $\Omega_{1}$.

Using \eqref{ET3-2} and \eqref{Eeq01} yields
\begin{align}\label{ET3-3}
&\int_{\Omega_{1}}\big[|\nabla\times\boldsymbol{E}_{1}|^{2}
-\boldsymbol{E}_{1}\cdot(\nabla\times\nabla\times\overline{\boldsymbol{E}}_{1}
)\big]{\rm d}{\boldsymbol{x}}\nonumber\\
= & \int_{\Omega_{1}}\left(|\nabla\times\boldsymbol{E}_{1}|^{2}
-\omega^{2}\mu_{1}\varepsilon_{1}|\boldsymbol{E}_{1}|^{2}
+\mathrm{i}\omega\mu_{1}\sigma_{1}|\boldsymbol{E}_{1}|^{2}\right){\rm
d}{\boldsymbol{x}}\nonumber\\
= & -\mathrm{i}\omega\mu_{1}\int_{S}
\overline{\boldsymbol{H}}_{1}\cdot(\boldsymbol{\nu}_{S}\times\boldsymbol{E}_{1})
{\rm d}{s_{\boldsymbol{x}}},
\end{align}
which gives by taking the imaginary part of \eqref{ET3-3} that
\begin{eqnarray}\label{ET3-4}
-\Re\bigg[\int_{S}
\overline{\boldsymbol{H}}_{1}\cdot(\boldsymbol{\nu}_{S}\times\boldsymbol{E}_{1})
{\rm d}{s_{\boldsymbol{x}}}\bigg]
=\sigma_{1}\int_{\Omega_{1}}|\boldsymbol{E}_{1}|^{2}{\rm
d}{\boldsymbol{x}}\geq0.
\end{eqnarray}
Similarly, we may show that
\begin{eqnarray}\label{ET3-5}
\Re\bigg[\int_{S}
\overline{\boldsymbol{H}}_{2}\cdot(\boldsymbol{\nu}_{S}\times\boldsymbol{E}_{2})
{\rm d}{s_{\boldsymbol{x}}}\bigg]
=\sigma_{2}\int_{\Omega_{2}}|\boldsymbol{E}_{2}|^{2}{\rm
d}{\boldsymbol{x}}\geq0.
\end{eqnarray}
Noting the continuity conditions \eqref{EBCS0} and
$\boldsymbol{H}_{j}=\boldsymbol{\nu}_{S}\times(\boldsymbol{H}_{j}
\times\boldsymbol{\nu}_{S})+
(\boldsymbol{\nu}_{S}\cdot\boldsymbol{H}_{j})\boldsymbol{\nu}_{S}, j=1, 2$ on
$S$, we have
$\overline{\boldsymbol{H}}_{1}\cdot(\boldsymbol{\nu}_{S}\times\boldsymbol{E}
_{1}
)=\overline{\boldsymbol{H}}_{2}\cdot(\boldsymbol{\nu}_{S}\times\boldsymbol{E}_{2
})$ on $S$, and
\begin{eqnarray}\label{ET3-6}
\int_{S}
\overline{\boldsymbol{H}}_{1}\cdot(\boldsymbol{\nu}_{S}\times\boldsymbol{E}_{1})
{\rm d}{s_{\boldsymbol{x}}}
=\int_{S}
\overline{\boldsymbol{H}}_{2}\cdot(\boldsymbol{\nu}_{S}\times\boldsymbol{E}_{2})
{\rm d}{s_{\boldsymbol{x}}}.
\end{eqnarray}
It follows immediately from combining \eqref{ET3-4}--\eqref{ET3-6} and
$\sigma_{j}>0$ that
\[
\int_{{\Omega}_{1}}|\boldsymbol{E}_{1}|^{2}{\rm
d}{\boldsymbol{x}}=\int_{{\Omega}_{2}}|\boldsymbol{E}_{2}|^{2}{\rm
d}{\boldsymbol{x}}=0,
\]
which implies that $\boldsymbol E_1=0$ in $\Omega_1$ and $\boldsymbol E_2=0$ in
$\Omega_2$.
\end{proof}

\section{Uniqueness of the inverse problem}

This section addresses the uniqueness of the inverse hybrid surface scattering
problem. For the given incident field, we show that the obstacle and the
unbounded rough surface can be uniquely determined by  the tangential trace
of the electric field
$\boldsymbol{\nu}_{\Gamma_{H}}\times\boldsymbol{E}_{1}|_{\Gamma_{H}}$,
where $\Gamma_{H}=\{\boldsymbol x\in \mathbb{R}^{3}|\ x_{3}=H\}$ is a
plane surface above the obstacle and unbounded rough surface and
$\boldsymbol{\nu}_{\Gamma_{H}}=(0,0,1)^{\top}$.

Let $\widetilde S\in C^2$ be an unbounded rough surface which divides
$\mathbb R^3$ into the upper half space $\widetilde\Omega_1^+$ and the lower
half space $\widetilde\Omega_2$. Let $\widetilde
D\subset\subset\widetilde\Omega_1^+$ be a bounded domain with the boundary
$\widetilde\Gamma\in C^2$.
Define $\widetilde\Omega_1=\widetilde\Omega_1^+\setminus\overline{\widetilde
D}$. Let $(\widetilde{\boldsymbol{E}}_{1},
\widetilde{\boldsymbol{E}}_{2})$ be the unique solutions of Problem 2.1 with
the hybrid surface
$(D, S)$ replaced by $(\widetilde{D}, \widetilde S)$
but for the same incident field $\boldsymbol E^i$ satisfying \eqref{PSEi}.
The point dipole source is assumed to be located at $\boldsymbol{x}_s\in
\Omega_{1}\cap \widetilde{\Omega}_{1}$.

\begin{theorem}\label{ET4}
Assume that
$\boldsymbol{\nu}_{\Gamma_{H}}\times\boldsymbol{E}_{1}|_{\Gamma_{H}}=
\boldsymbol{\nu}_{\Gamma_{H}}\times\boldsymbol{\widetilde{E}}_{1}|_{\Gamma_{H}}$
, then $D=\widetilde{D}, S=\widetilde{S}$.
\end{theorem}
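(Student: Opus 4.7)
The plan is to argue by contradiction, combining three classical ingredients: (i) propagation of the equality $\boldsymbol{E}_1=\widetilde{\boldsymbol{E}}_1$ from the measurement plane $\Gamma_H$ into the common exterior via Holmgren's theorem and unique continuation, (ii) the freedom to move the dipole source $\boldsymbol{x}_s$, and (iii) a singular source construction that forces one of the two total fields to blow up while the other remains bounded.

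Set $G=\Omega_1\cap\widetilde\Omega_1$. In $G\setminus\{\boldsymbol{x}_s\}$ both $\boldsymbol{E}_1$ and $\widetilde{\boldsymbol{E}}_1$ solve the same equation \eqref{Eeq1} driven by the identical incident field \eqref{PSEi}, so the difference $\boldsymbol{W}=\boldsymbol{E}_1-\widetilde{\boldsymbol{E}}_1$ satisfies the homogeneous Maxwell system together with $\boldsymbol{\nu}_{\Gamma_H}\times\boldsymbol{W}=0$ on $\Gamma_H$. Since $\boldsymbol{W}$ is real-analytic away from the source, Holmgren's uniqueness theorem in a thin slab on each side of $\Gamma_H$ gives $\boldsymbol{W}\equiv 0$ near $\Gamma_H$, and the unique continuation principle extends the identity $\boldsymbol{E}_1=\widetilde{\boldsymbol{E}}_1$ to the whole unbounded connected component $G^\ast$ of $G\setminus\{\boldsymbol{x}_s\}$. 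This identification holds for every admissible source location $\boldsymbol{x}_s\in G^\ast$, the measurement hypothesis being understood in this naturally parametrised form.

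Next I would suppose, for contradiction, that $(D,S)\neq(\widetilde D,\widetilde S)$. A case analysis produces a point $\boldsymbol{z}^\ast\in\partial G^\ast$ which lies in the interior of one of the two exterior domains but is a boundary point of the other; the representative case is $\boldsymbol{z}^\ast\in\Gamma$ and $\boldsymbol{z}^\ast\in\widetilde\Omega_1$, while the cases involving $\widetilde\Gamma$, $S$ or $\widetilde S$ are treated symmetrically. Choose a sequence of source points $\boldsymbol{x}_s^{(n)}\in G^\ast$ with $\boldsymbol{x}_s^{(n)}\to\boldsymbol{z}^\ast$ along the exterior normal, together with a fixed test point $\boldsymbol{x}_0\in G^\ast$ bounded away from $\boldsymbol{z}^\ast$. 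By the preceding step, $\boldsymbol{E}_1(\boldsymbol{x}_0;\boldsymbol{x}_s^{(n)})=\widetilde{\boldsymbol{E}}_1(\boldsymbol{x}_0;\boldsymbol{x}_s^{(n)})$ for each $n$.

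The heart of the argument, and its main technical obstacle, is to drive this equality to a contradiction via a reciprocity and a blow-up estimate. For $(\widetilde D,\widetilde S)$ the point $\boldsymbol{z}^\ast$ is interior to $\widetilde\Omega_1$, so interior regularity together with the well-posedness obtained in Section 3 and the exponential decay of Lemma \ref{EL02} imply that $\{\widetilde{\boldsymbol{E}}_1(\boldsymbol{x}_0;\boldsymbol{x}_s^{(n)})\}_n$ stays uniformly bounded. For $(D,S)$, however, the source approaches the perfect conductor $\Gamma$; invoking a reciprocity identity for the full two-layered dyadic Green's function of the scattering configuration, one trades the roles of $\boldsymbol{x}_0$ and $\boldsymbol{x}_s^{(n)}$ and reads off the near-source singular behaviour of $\boldsymbol{G}_1$ in \eqref{G}, which forces $|\boldsymbol{E}_1(\boldsymbol{x}_0;\boldsymbol{x}_s^{(n)})|\to\infty$. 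The contradiction yields $D=\widetilde D$ and $S=\widetilde S$ simultaneously. The delicate points are (a) justifying the reciprocity relation in the lossy stratified setting with an unbounded rough interface, and (b) quantifying the blow-up rate so that it survives a possibly oblique approach of $\boldsymbol{x}_s^{(n)}$ to $\boldsymbol{z}^\ast$; these will constitute the bulk of the rigorous proof.
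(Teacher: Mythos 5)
Your first step (Holmgren plus unique continuation to propagate $\boldsymbol{E}_{1}=\widetilde{\boldsymbol{E}}_{1}$ from $\Gamma_{H}$ into $\Omega_{1}\cap\widetilde{\Omega}_{1}$) matches the paper. The core of your contradiction argument, however, contains a genuine error: the claim that $|\boldsymbol{E}_{1}(\boldsymbol{x}_{0};\boldsymbol{x}_{s}^{(n)})|\to\infty$ at a \emph{fixed remote} point $\boldsymbol{x}_{0}$ as the source $\boldsymbol{x}_{s}^{(n)}$ approaches a boundary point $\boldsymbol{z}^{\ast}$ is false. The reciprocity identity you invoke works against you here: it converts $\boldsymbol{E}_{1}(\boldsymbol{x}_{0};\boldsymbol{x}_{s}^{(n)})$ into (the transpose of) the field evaluated at $\boldsymbol{x}_{s}^{(n)}$ generated by a source fixed at $\boldsymbol{x}_{0}$, and since that solution is H\"older continuous up to the boundary (the solution class is $C^{2}(\Omega_{1})\cap C^{0,\alpha}(\overline{\Omega}_{1})$), this quantity converges to a finite boundary value rather than blowing up; the "near-source singularity of $\boldsymbol{G}_{1}$" plays no role after the swap because the two arguments of the Green function stay far apart. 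The singular-sources method only produces a blow-up when the field is evaluated \emph{near the approaching source itself}, where the scattered field must cancel the increasingly singular tangential trace of $\boldsymbol{G}_{1}(\cdot-\boldsymbol{x}_{s}^{(n)})\boldsymbol{q}$ on the nearby boundary. Beyond this, you explicitly defer the two hardest ingredients (reciprocity for the layered lossy configuration with an unbounded interface, and the quantitative blow-up rate), so even the corrected version of your plan is not yet a proof.

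For comparison, the paper avoids both difficulties. For the obstacle it does not use singular sources at all: it forms the gap region $\widetilde{Q}=D\setminus\overline{(D\cap\widetilde{D})}$, whose boundary consists of pieces of $\Gamma$ and $\widetilde{\Gamma}$ on which $\boldsymbol{\nu}\times\widetilde{\boldsymbol{E}}_{1}=0$ (by the PEC condition and the identity $\boldsymbol{E}_{1}=\widetilde{\boldsymbol{E}}_{1}$), applies the vector Green identity in $\widetilde{Q}$, and uses the loss $\sigma_{1}>0$ to take the imaginary part and conclude $\widetilde{\boldsymbol{E}}_{1}\equiv 0$ in $\widetilde{Q}$, hence everywhere by unique continuation --- a contradiction with the inhomogeneous source term. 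For the surface it does use a singular source at $\boldsymbol{x}_{\varepsilon}=\boldsymbol{x}^{\ast}+\varepsilon\boldsymbol{e}_{3}$, but the contradiction is read off on the nearby patch $S_{\varepsilon}\subset S$: there $\|\boldsymbol{G}_{1}\boldsymbol{q}_{\varepsilon}\|_{L^{\infty}(S_{\varepsilon})}\to\infty$ while $\boldsymbol{E}_{1}=\widetilde{\boldsymbol{E}}_{1}$ and $\boldsymbol{E}^{s}=\widetilde{\boldsymbol{E}}^{s}$ stay bounded on $S_{\varepsilon}$ because $\boldsymbol{x}^{\ast}$ has positive distance from $\widetilde{S}$ and the tilde problem is well posed. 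No reciprocity and no remote evaluation point are needed. If you wish to salvage a unified singular-source treatment, you must relocate your evaluation point to a neighbourhood of $\boldsymbol{z}^{\ast}$ and prove a lower bound on the scattered field there; as written, the argument does not close.
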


\begin{proof}
We prove it by contradiction and assume that $D\neq \widetilde{D}$, $S\neq
\widetilde{S}$. The problem geometry is shown in  Figure \ref{pg_2}. Let
$\boldsymbol{\widehat{E}}=\boldsymbol{E}_{1}-\boldsymbol{\widetilde{E}}_{1}$,
then $\boldsymbol{\widehat{E}}$ satisfies Maxwell's equation
\[
\nabla\times\nabla\times
\boldsymbol{\widehat{E}}-\kappa_{1}^{2}\boldsymbol{\widehat{E}}=0\quad\text{in
} \ \Omega_{1}\cap \widetilde{\Omega}_{1}.
\]
By the assumption $
\boldsymbol{\nu}_{\Gamma_{H}}\times\boldsymbol{E}_{1}|_{\Gamma_{H}}=
\boldsymbol{\nu}_{\Gamma_{H}}\times\boldsymbol{\widetilde{E}}_{1}|_{\Gamma_{H}}$
and the uniqueness result for the direct scattering problem,
it follows that $\boldsymbol{E}_{1}(\boldsymbol{x})=
\boldsymbol{\widetilde{E}}_{1}(\boldsymbol{x})$ for all
$\boldsymbol{x}\in \Omega_{H}=\{\boldsymbol x\in \mathbb{R}^{3}|: x_{3}\geq
H\}$.
By the analytic continuation,
we get that $\boldsymbol{E}_{1}(\boldsymbol{x})=
\boldsymbol{\widetilde{E}}_{1}(\boldsymbol{x})$ for all $\boldsymbol{x}\in
\Omega_{1}\cap \widetilde{\Omega}_{1}$. Since
$\boldsymbol{\widehat{E}}\in C^{2}(\Omega_{1}\cap \widetilde{\Omega}_{1})\cap
C^{0,\alpha}(\overline{\Omega_{1}\cap \widetilde{\Omega}_{1}})$, we have
\begin{equation*}\label{ET4-2}
\boldsymbol{\widehat{E}}(\boldsymbol{x})
=\boldsymbol{E}_{1}(\boldsymbol{x})-\boldsymbol{\widetilde{E}}_{1}(\boldsymbol{x
})
= 0,\quad \boldsymbol{x}\in\ \overline{\Omega_{1}\cap
\widetilde{\Omega}_{1}}.
\end{equation*}
In particular, we have
\begin{equation}\label{ET4-3}
\boldsymbol{\widehat{E}}\big|_{\partial (\Omega_{1}\cap \widetilde{\Omega}_{1})}
=\boldsymbol{E}_{1}\big|_{\partial (\Omega_{1}\cap \widetilde{\Omega}_{1})}
-\boldsymbol{\widetilde{E}}_{1}\big|_{\partial (\Omega_{1}\cap
\widetilde{\Omega}_{1})}=0.
\end{equation}

\begin{figure}
\centering
\includegraphics[width=0.5\textwidth]{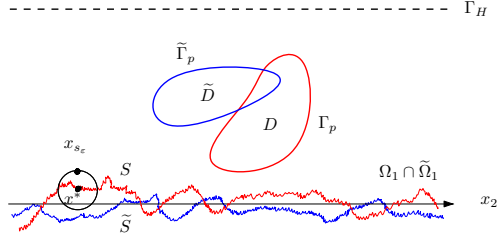}
\caption{Problem geometry of the scattering problem.}
\label{pg_2}
\end{figure}

First, we prove that the obstacle can be uniquely determined. In the case when
$D\neq \widetilde{D}$ which include $D\cap \widetilde{D}\neq
\emptyset$ and $D\cap \widetilde{D}=\emptyset$, without loss of generality,
let us denote the region between  $D$  and $D\cap \widetilde{D}$  by
$\widetilde{Q}=D\setminus\overline{(D\cap \widetilde{D})}$,
then we have $\widetilde{Q}\subset D$ and $\widetilde{Q}\nsubseteq
\widetilde{D}$
with the boundary $\partial\widetilde{Q}=\Gamma_{p}\cup \widetilde{\Gamma}_{
p}$,
where $\Gamma_{p}$ and $\widetilde{\Gamma}_{p}$ denote the part of the boundary
$\Gamma$ and $\widetilde{\Gamma}$, respectively.
Thus, from \eqref{ET4-3} and \eqref{EBCS1}, we obtain
\begin{eqnarray}\label{ET4-4}
\boldsymbol{\nu}_{\Gamma_{p}}\times\boldsymbol{\widetilde{E}}_{1}\big|_{\Gamma_{
p}}
=\boldsymbol{\nu}_{\widetilde{\Gamma}_{p}}\times\boldsymbol{\widetilde{E}}_{1}
\big|_{\widetilde{\Gamma}_{p}}=0.
\end{eqnarray}
Applying vector Green's first theorem to $\boldsymbol{\widetilde{E}}_{1}$  in
$\widetilde{Q}$, we have from \eqref{ET4-4} that
\begin{align*}
\int_{\widetilde{Q}}\left(|\nabla\times\boldsymbol{\widetilde{E}}_{1}|^{2}
-\boldsymbol{\widetilde{E}}_{1}\cdot(\nabla\times\nabla\times\overline{
\boldsymbol{\widetilde{E}}_{1}})\right){\rm d}{\boldsymbol{x}}
=\int_{\partial \widetilde{Q}}\boldsymbol{\nu}
\cdot[\boldsymbol{\widetilde{E}}_{1}\times(\nabla\times\overline{\boldsymbol{
\widetilde{E}}_{1}})] {\rm d}{s_{\boldsymbol{x}}}\nonumber\\
=\int_{\partial
\widetilde{Q}}(\nabla\times\overline{\boldsymbol{\widetilde{E}}_{1}})
\cdot[\boldsymbol{\nu}\times\boldsymbol{\widetilde{E}}_{1}] {\rm
d}{s_{\boldsymbol{x}}}=0.
\end{align*}
On the other hand, note that the incident field is a point
dipole source located at $\boldsymbol{x}_s \in \Omega_{1}\cap
\widetilde{\Omega}_{1}$, then we have
$\mathrm{i}\omega\mu_{1}\boldsymbol{\tilde{J}}_{cs}(\boldsymbol{x})=\boldsymbol{
I}\delta(\boldsymbol{x}-\boldsymbol{x}_s)=0$ in $\widetilde{Q}$.
By \eqref{Eeq1}, we have
\begin{align}\label{ET4-6}
&\int_{\widetilde{Q}}\left(|\nabla\times\boldsymbol{\widetilde{E}}_{1}|^{2}
-\boldsymbol{\widetilde{E}}_{1}\cdot(\nabla\times\nabla\times\overline{
\boldsymbol{\widetilde{E}}_{1}})\right){\rm d}{\boldsymbol{x}}\nonumber\\
=&\int_{\widetilde{Q}}\left(|\nabla\times\boldsymbol{\widetilde{E}}_{1}|^{2}
-\overline{\kappa_{1}^{2}}|\boldsymbol{\widetilde{E}}_{1}|^{2}+\mathrm{i}
\omega\mu_{1}\boldsymbol{\widetilde{E}}_{1}
\cdot\overline{\boldsymbol{\tilde{J}}}_{cs}\right){\rm
d}{\boldsymbol{x}}\nonumber\\
=&\int_{\widetilde{Q}}\left(|\nabla\times\boldsymbol{\widetilde{E}}_{1}|^{2}
-\omega^{2}\mu_{1}\varepsilon_{1}|\boldsymbol{\widetilde{E}}_{1}|^{2}
+\mathrm{i}\omega\mu_{1}\sigma_{1}|\boldsymbol{\widetilde{E}}_{1}|^{2}\right){
\rm d}{\boldsymbol{x}}.
\end{align}
For $\omega\mu_{1}\sigma_{1}>0$, taking the imaginary part of \eqref{ET4-6}, we
obtain $
\int_{\widetilde{Q}}|\boldsymbol{\widetilde{E}}_{1}|^{2}{\rm
d}{\boldsymbol{x}}=0,$
which implies that $\boldsymbol{\widetilde{E}}_{1}=0$ in $\widetilde{Q}$.
It follows from Theorem \ref{ET3} and $\boldsymbol{\widetilde{E}}_{1}\in
\mathcal{T}_{1}(\widetilde{\Omega}_{1})$ that
we have $\boldsymbol{\widetilde{E}}_{1}=0$ in
$\overline{\widetilde{\Omega}}_{1}$.
This is a contradiction because the total field $\boldsymbol{\widetilde{E}}_{1}$
is a nontrivial solution of the inhomogeneous equation \eqref{Eeq1} in
$\widetilde{\Omega}_{1}$.
Hence, $D=\widetilde{D}$.

Next we show that the unbounded rough surface can also be uniquely determined.
In the case when $S\neq \widetilde{S}$ which includes $S\cap \widetilde{S}\neq
\emptyset$ and $S\cap \widetilde{S}=\emptyset$.
If $S_{1}$ is a segment of $S$,
we may assume without loss of generality that $S_{1}$ is located above
$\widetilde{S}$. Let $\boldsymbol{x}^{*} \in S_{1}$,
choose $\varepsilon>0$ such that
$\boldsymbol{x}_{\varepsilon}:=\boldsymbol{x}^{*}+\varepsilon
\boldsymbol{e}_{3}\in \Omega_{1}\cap \widetilde{\Omega}_{1}$, where
$\boldsymbol e_3=(0, 0, 1)^\top$. Assuming that the incident field is given by a
point dipole source located at
$\boldsymbol{x}_{\varepsilon}$ with the unit polarization vector
$\boldsymbol{q}_{\varepsilon}$, we take
\begin{equation}\label{ET4-12}
\boldsymbol{E}^{i}(\boldsymbol{x})=\boldsymbol{G}_{1}(\boldsymbol{x}-\boldsymbol
{x}_{\varepsilon})\boldsymbol{q}_{\varepsilon}\quad\text{in}\
\overline{\Omega_{1}^{+}\cap\widetilde{ \Omega}_{1}^{+}}.
\end{equation}
Let $S_{\varepsilon}=S_{1}\cap B_{\varepsilon}(\boldsymbol{x}^{*})\subset S$,
where $B_{\varepsilon}(\boldsymbol{x}^{*})$ denotes the sphere centered at the
origin $\boldsymbol{x}^{*}$ with radius $\varepsilon$
and $\widetilde{S}\cap B_{\varepsilon}(\boldsymbol{x}^{*})=\emptyset$.
Then, from $\boldsymbol{E}^{i}+\boldsymbol{E}^{s}=\boldsymbol{E}_{1}$ and
\eqref{ET4-12}, we have
\begin{equation}\label{ET4-13}
\|\boldsymbol{G}_{1}\boldsymbol{q}_{\varepsilon}\|_{L^{\infty}(S_{\varepsilon})}
=\|\boldsymbol{E}_{1}-\boldsymbol{E}^{s}\|_{L^{\infty}(S_{\varepsilon})}
\leq\|\boldsymbol{E}_{1}\|_{L^{\infty}(S_{\varepsilon})}
+\|\boldsymbol{E}^{s}\|_{L^{\infty}(S_{\varepsilon})}.
\end{equation}
Because $\boldsymbol{x}^{*}$ has a positive distance from $\widetilde{S}$,
the well-posedness of the direct problem implies that there exists $C_1 > 0$
(independent of $\varepsilon$) such that
$
\boldsymbol{E}_{1}\big|_{S_{\varepsilon}}
=\boldsymbol{\widetilde{E}}_{1}\big|_{S_{\varepsilon}}
$
and
$
\boldsymbol{E}^{s}\big|_{S_{\varepsilon}}
=\boldsymbol{\widetilde{E}}^{s}\big|_{S_{\varepsilon}}
$
satisfy the estimate
\begin{equation}\label{ET4-14}
\|\boldsymbol{E}_{1}\|_{L^{\infty}(S_{\varepsilon})}
+\|\boldsymbol{E}^{s}\|_{L^{\infty}(S_{\varepsilon})}
=\|\boldsymbol{\widetilde{E}}_{1}\|_{L^{\infty}(S_{\varepsilon})}
+\|\boldsymbol{\widetilde{E}}^{s}\|_{L^{\infty}(S_{\varepsilon})}
\leq C_1<+\infty.
\end{equation}
It follows from \eqref{ET4-13} and \eqref{ET4-14} that
\begin{equation}\label{ET4-15}
\|\boldsymbol{G}_{1}\boldsymbol{q}_{\varepsilon}\|_{L^{\infty}(S_{\varepsilon})}
\leq C_1.
\end{equation}
This is a contradiction because the left-hand side of the above inequality
\eqref{ET4-15} goes to infinity as $\varepsilon\to 0$. Hence, $S=\widetilde{S}$.
\end{proof}

\section{Local stability}

In this section, we present a local stability result. Let us begin with the
calculation of domain derivative which plays an important role in the stability
analysis.

Let $\mathcal{I}: \mathbb{R}^{3}\rightarrow\mathbb{R}^{3}$  be the identity
mapping and let $\theta: \Gamma\cup S\rightarrow\mathbb{R}^{3}$ be an
admissible perturbation, where $\theta$ is assumed to be an admissible
perturbation in $C^{2}(\Gamma\cup S, \mathbb{R}^{3})$ and has a compact support.
For $\theta\in C^{2}(\Gamma\cup S, \mathbb{R}^{3})$,
we can extend the definition of function $\theta(\boldsymbol{x})$ to
$\overline{\Omega}_{j}$ by satisfying:
$\theta(\boldsymbol{x})\in C^{2}(\Omega_{j}, \mathbb{R}^{3}) \cap C
(\overline{\Omega}_{j})$;
$\mathcal{I}+{\theta}: \Omega_{j}\rightarrow \Omega_{j,\theta}, j=1, 2$.
Here the region $\Omega_{j,\theta}$ bounded by $\Gamma_{\theta}$ and
$S_{\theta}$, where
\begin{eqnarray*}\label{GaS}
\Gamma_{\theta}=\{\boldsymbol{x}+\theta(\boldsymbol{x}):\boldsymbol{x}\in
\Gamma\},\quad S_{\theta}=\{\boldsymbol{x}+\theta(\boldsymbol{x}):\boldsymbol{x}
\in S\}.
\end{eqnarray*}
Let $\theta(\boldsymbol{x})=(\theta_{1}(\boldsymbol{x}),
\theta_{2}(\boldsymbol{x}), \theta_{3}(\boldsymbol{x}))^{\top}$.
Clearly, $\Omega_{j,\theta}$ is an admissible perturbed configuration of the
reference region $\Omega_{j}$. Note that
$\Omega_{j,\boldsymbol{0}}=\Omega_{j}$, $\Gamma_{\boldsymbol{0}}=\Gamma$,
and $S_{\boldsymbol{0}}=S$.
According to Theorem \ref{ET3}, there exist the unique solutions
$(\boldsymbol{E}_{1,\theta}, \boldsymbol{E}_{2,\theta})$
to Problem \ref{SP} corresponding to the region $\Omega_{j,\theta}$
for any small enough $\theta$. Note that this function
$\boldsymbol{E}_{j,\theta}=\boldsymbol{E}_{j}(\theta, \boldsymbol{x})$
cannot be differentiated with respect to $\theta$ in the classical sense.
For this reason, we adopt the following concept of a domain derivative.

Denote by
\[
 \boldsymbol{E}'_{j}=\frac{\partial\boldsymbol{E}_{j, \theta}}{\partial
\theta}(0)\boldsymbol{p}
\]
the domain derivative of $\boldsymbol{E}_{j,\theta}$ at $\theta=0$ in the
direction $\boldsymbol{p}(\boldsymbol{x})
=(p_{1}(\boldsymbol{x}), p_{2}(\boldsymbol{x}), p_{3}(\boldsymbol{x}))^{\top}\in
C^{2}(\Gamma\cup S, \mathbb{R}^{3})$.
Define a nonlinear map
\begin{eqnarray*}\label{NoNM}
Y : \Gamma_{\theta}\cup S_{\theta}\rightarrow
\boldsymbol{\nu}_{\Gamma_{H}}\times \boldsymbol{E}_{1,\theta}|_{\Gamma_{H}}.
\end{eqnarray*}
The domain derivative of the operator $Y$ on the boundary $\Gamma\cup S$ along
the direction $\boldsymbol{p}$ is defined by
\begin{eqnarray*}\label{NoNMF}
Y'(\Gamma\cup S, \boldsymbol{p}):=\boldsymbol{\nu}_{\Gamma_{H}}\times
\boldsymbol{E}'_{1}|_{\Gamma_{H}}.
\end{eqnarray*}

We introduce the notations
\begin{eqnarray*}\label{nt}
\boldsymbol{V}_{\Gamma_{\tau}}=\boldsymbol{\nu}_{\Gamma}\times(\boldsymbol{V}
\times\boldsymbol{\nu}_{\Gamma}),\quad
\boldsymbol{V}_{\Gamma_{\nu}}=\boldsymbol{\nu}_{\Gamma}\cdot\boldsymbol{V},\quad
\boldsymbol{V}_{S_{\tau}}=\boldsymbol{\nu}_{S}\times(\boldsymbol{V}
\times\boldsymbol{\nu}_{S}),\quad
\boldsymbol{V}_{S_{\nu}}=\boldsymbol{\nu}_{S}\cdot\boldsymbol{V},
\end{eqnarray*}
which are the tangential and the normal components of a vector $\boldsymbol{V}$
on the boundary $\Gamma$ and $S$, respectively. It is clear to note that
$\boldsymbol{V}=\boldsymbol{V}_{\Gamma_{\tau}}+\boldsymbol{V}_{\Gamma_{\nu}}
\boldsymbol{\nu}_{\Gamma}$ on $\Gamma$
and
$\boldsymbol{V}=\boldsymbol{V}_{S_{\tau}}+\boldsymbol{V}_{S_{\nu}}\boldsymbol{
\nu}_{S}$ on $S$.
Denote by $\nabla_{\Gamma_{\tau}}$ and $\nabla_{S_{\tau}}$ the surface gradient
on $\Gamma$
and $S$, and denote by $\partial_{\boldsymbol{\nu}_{\Gamma}}$ and
$\partial_{\boldsymbol{\nu}_{S}}$ the normal derivative on $\Gamma$
and $S$, respectively.

Define the jump
\begin{eqnarray}\label{ET5-23}
[\boldsymbol{E}]
=\lim_{\substack{\boldsymbol{a}_{1}\to 0\\
\boldsymbol{x}+\boldsymbol{a}_{1}\in
\Omega_{1}}}\boldsymbol{E}_{1}(\boldsymbol{x}+\boldsymbol{a}_{1})
-\lim_{\substack{\boldsymbol{a}_{2}\to 0\\
\boldsymbol{x}+\boldsymbol{a}_{2}\in
\Omega_{2}}}\boldsymbol{E}_{2}(\boldsymbol{x}+\boldsymbol{a}_{2})
,\quad\boldsymbol{x}\in S,
\end{eqnarray}
of the continuous extension of a function $\boldsymbol{E}$ to the boundary from
$\Omega_{1}$ and $\Omega_{2}$, respectively.

\begin{theorem}\label{ET5}
Let $(\boldsymbol{E}_{1}, \boldsymbol{E}_{2})$ be the solutions of Problem
\ref{SP}.
Given $\boldsymbol{p}\in C^{2}(\Gamma\cup S, \mathbb{R}^{3})$,
the domain derivatives $(\boldsymbol{E}'_1, \boldsymbol E'_2)$ of
$(\boldsymbol{E}_1, \boldsymbol E_2)$ are the radiation solutions of the
following problem:
\begin{equation}\label{E1F}
\begin{cases}
\nabla\times\nabla\times\boldsymbol{E}'_{1}
-\kappa_{1}^{2}\boldsymbol{E}'_{1}=0 &\quad \text{in}\ \Omega_{1},\\
\nabla\times\nabla\times\boldsymbol{E}'_{2}
-\kappa_{2}^{2}\boldsymbol{E}'_{2}=0 &\quad \text{in}\ \Omega_{2},\\
\boldsymbol{\nu}_{\Gamma}\times{\boldsymbol{E}'_{1}}
=\big[\boldsymbol{p}_{\Gamma_{\nu}}(\partial_{\boldsymbol{\nu}_{\Gamma}}
\boldsymbol{E}_{1,\Gamma_{\tau}})+
\boldsymbol{E}_{1,\Gamma_{\nu}}(\nabla_{\Gamma_{\tau}}\boldsymbol{p}_{\Gamma_{
\nu}})\big]
\times\boldsymbol{\nu}_{\Gamma} &\quad \text{on}\ \Gamma,\\
[\boldsymbol{\nu}_{S}\times\boldsymbol{E}']
=-\mathrm{i}\omega[\mu\boldsymbol{H}_{S_{\tau}}]\boldsymbol{p}_{S_{\nu}}
-[\boldsymbol{\nu}_{S}\times(\nabla_{S_{\tau}}(\boldsymbol{p}_{S_{\nu}}
\boldsymbol{E}_{S_{\nu}}))] &\quad \text{on}\ S,\\
[\boldsymbol{\nu}_{S}\times\boldsymbol{H}']
=\mathrm{i}\omega[(\varepsilon+\mathrm{i}\frac{\sigma}{\omega})\boldsymbol{E}_{
S_{\tau}}]\boldsymbol{p}_{S_{\nu}}
-[\boldsymbol{\nu}_{S}\times(\nabla_{S_{\tau}}(\boldsymbol{p}_{S_{\nu}}
\boldsymbol{H}_{S_{\nu}}))] &\quad \text{on}\ S.
\end{cases}
\end{equation}
\end{theorem}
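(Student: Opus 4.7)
The strategy is the classical shape derivative approach, adapted to the coupled Maxwell system in the presence of two interfaces. It has four stages: transport the perturbed boundary value problem back to the reference configuration, establish differentiability in the perturbation parameter via the boundary integral framework of Section~3, differentiate the interior equations, and differentiate the surface identities to read off the boundary and transmission conditions.

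\textbf{Transport and differentiability.} First I would set $\boldsymbol{\Phi}_t(\boldsymbol{x}) = \boldsymbol{x}+t\boldsymbol{p}(\boldsymbol{x})$, which for $|t|$ small is a $C^{2}$ diffeomorphism mapping $(\Omega_j,\Gamma,S)$ onto $(\Omega_{j,t\boldsymbol{p}},\Gamma_{t\boldsymbol{p}},S_{t\boldsymbol{p}})$ and equal to the identity outside a compact set. The pulled-back fields $\hat{\boldsymbol{E}}_{j,t}(\boldsymbol{x}) := \boldsymbol{E}_{j,t\boldsymbol{p}}(\boldsymbol{\Phi}_t(\boldsymbol{x}))$ solve a family of Maxwell systems on the fixed domain $\Omega_j$ whose coefficients depend analytically on $t$, with transmission and perfect conductor data pulled back from $S_{t\boldsymbol{p}}$ and $\Gamma_{t\boldsymbol{p}}$. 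The corresponding boundary integral operators in \eqref{E1S}--\eqref{E2S} depend smoothly on $t$, and the operator $\tfrac{1}{2}I+(\text{compact})$ is invertible at $t=0$ by Lemma~\ref{EL01} together with Theorem~\ref{ET3}; an implicit function theorem argument then yields a differentiable family $t\mapsto\hat{\boldsymbol{E}}_{j,t}$. This produces the material derivative $\dot{\boldsymbol{E}}_j := \partial_t\hat{\boldsymbol{E}}_{j,t}|_{t=0}\in\mathcal{T}_j$, and the shape derivative is then $\boldsymbol{E}'_j = \dot{\boldsymbol{E}}_j - (\boldsymbol{p}\cdot\nabla)\boldsymbol{E}_j$.

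\textbf{Interior equations and the condition on $\Gamma$.} Since $\boldsymbol{p}$ has compact support and the source is a point dipole located in $\Omega_1\cap\Omega_{1,t\boldsymbol{p}}$ for all small $|t|$, every fixed $\boldsymbol{x}\in\Omega_j\setminus\{\boldsymbol{x}_s\}$ lies in the source-free region of $\boldsymbol{E}_{j,t\boldsymbol{p}}$ as well. Differentiating the equation $\nabla\times\nabla\times\boldsymbol{E}_{j,t\boldsymbol{p}}-\kappa_j^2\boldsymbol{E}_{j,t\boldsymbol{p}}=0$ in $t$ at $t=0$ therefore yields the first two lines of \eqref{E1F}, and the radiation conditions \eqref{EsIRC}--\eqref{E2IRC} transfer directly to $\boldsymbol{E}'_j$ because the perturbation is compactly supported. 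For the perfect conductor condition, I would differentiate $\boldsymbol{\nu}_{\Gamma_{t\boldsymbol{p}}}\times\boldsymbol{E}_{1,t\boldsymbol{p}}=0$ at $t=0$, use the splitting $\boldsymbol{E}_1=\boldsymbol{E}_{1,\Gamma_\tau}+\boldsymbol{E}_{1,\Gamma_\nu}\boldsymbol{\nu}_\Gamma$ together with $\boldsymbol{E}_{1,\Gamma_\tau}|_\Gamma=0$, and invoke the standard deformation formula $\dot{\boldsymbol{\nu}}_\Gamma = -\nabla_{\Gamma_\tau}\boldsymbol{p}_{\Gamma_\nu}$ (up to a sign convention). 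Rewriting via $\dot{\boldsymbol{E}}_1 = \boldsymbol{E}'_1+(\boldsymbol{p}\cdot\nabla)\boldsymbol{E}_1$ and collecting the purely normal-direction part $\boldsymbol{p}_{\Gamma_\nu}\partial_{\boldsymbol{\nu}_\Gamma}\boldsymbol{E}_{1,\Gamma_\tau}$ with the geometric contribution $\boldsymbol{E}_{1,\Gamma_\nu}(\nabla_{\Gamma_\tau}\boldsymbol{p}_{\Gamma_\nu})$ coming from $\dot{\boldsymbol{\nu}}_\Gamma\times\boldsymbol{E}_1$ produces the displayed formula on $\Gamma$.

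\textbf{Transmission on $S$ and main obstacle.} The delicate step is the last two lines of \eqref{E1F}. I would differentiate the two interface identities $[\boldsymbol{\nu}_{S_{t\boldsymbol{p}}}\times\boldsymbol{E}_{t\boldsymbol{p}}]=0$ and $[\boldsymbol{\nu}_{S_{t\boldsymbol{p}}}\times\boldsymbol{H}_{t\boldsymbol{p}}]=0$ along $S_{t\boldsymbol{p}}$ at $t=0$ and pull the result back to $S$. The difficulty is that tangential continuity of $\boldsymbol{E}$ and $\boldsymbol{H}$ does \emph{not} imply continuity of their material (or shape) derivatives, because the two sides of $S$ carry different constitutive parameters $\mu_j$ and $\varepsilon_j+\mathrm{i}\sigma_j/\omega$. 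To convert normal derivatives on $S$ into tangential objects I would apply the Maxwell identities $\nabla\times\boldsymbol{E}_j = \mathrm{i}\omega\mu_j\boldsymbol{H}_j$ and $\nabla\times\boldsymbol{H}_j = -\mathrm{i}\omega(\varepsilon_j+\mathrm{i}\sigma_j/\omega)\boldsymbol{E}_j$, and then use that $\mu_j\boldsymbol{H}_{j,S_\nu}$ and $(\varepsilon_j+\mathrm{i}\sigma_j/\omega)\boldsymbol{E}_{j,S_\nu}$ are continuous across $S$ while $\mu_j\boldsymbol{H}_{j,S_\tau}$ and $(\varepsilon_j+\mathrm{i}\sigma_j/\omega)\boldsymbol{E}_{j,S_\tau}$ are not. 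The main obstacle is essentially bookkeeping: one must keep careful track of which surface traces are continuous and which jump, substitute $\dot{\boldsymbol{\nu}}_S=-\nabla_{S_\tau}\boldsymbol{p}_{S_\nu}$ and $\dot{\boldsymbol{E}}_j = \boldsymbol{E}'_j+(\boldsymbol{p}\cdot\nabla)\boldsymbol{E}_j$, recognize the surface-gradient term $\nabla_{S_\tau}(\boldsymbol{p}_{S_\nu}\boldsymbol{V}_{S_\nu})$ emerging from the tangential contribution of $(\boldsymbol{p}\cdot\nabla)\boldsymbol{V}$ on $S$ for $\boldsymbol{V}\in\{\boldsymbol{E},\boldsymbol{H}\}$, and assemble these to match the prescribed jumps $-\mathrm{i}\omega[\mu\boldsymbol{H}_{S_\tau}]\boldsymbol{p}_{S_\nu}$ and $\mathrm{i}\omega[(\varepsilon+\mathrm{i}\sigma/\omega)\boldsymbol{E}_{S_\tau}]\boldsymbol{p}_{S_\nu}$ together with the surface gradient terms.
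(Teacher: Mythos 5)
Your plan follows essentially the same route as the paper's proof: transport the perturbed problem to the reference configuration via $\mathcal{I}+\theta$, obtain the shape derivative as the material derivative minus the convective term $(\boldsymbol{p}\cdot\nabla)\boldsymbol{E}_j$, differentiate the interior (source-free) equations directly, and differentiate the pulled-back transmission identities on $S$ using the transformation rule for the normal vector together with the continuity of $[\boldsymbol{\nu}_S\times\boldsymbol{E}]$ and $[\mu\boldsymbol{H}_{S_\nu}]$ to isolate the jump terms. The only notable difference is that you make the differentiability of $t\mapsto\boldsymbol{E}_{j,t\boldsymbol{p}}(\boldsymbol{\Phi}_t)$ explicit via an implicit-function-theorem argument on the boundary integral equations of Section~3, whereas the paper asserts this differentiability more briefly and cites \cite{L-JDE12} for the condition on $\Gamma$; this is a refinement of a sub-step rather than a different method.
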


\begin{proof}
Define the operator $
\mathcal{A}=\nabla\times(\nabla\times)-\kappa_{1}^{2}\mathcal{I}$ and let
\begin{equation}\label{ET5-2}
\boldsymbol{\omega}_{\theta}=\mathcal{A}\boldsymbol{E}_{1, \theta},
\end{equation}
where $\boldsymbol{E}_{j,\theta}$ is a solution of Problem \ref{SP}
corresponding to the region $\Omega_{j,\theta}, j=1, 2$ for sufficiently small
$\theta$. Then, we have
\begin{equation}\label{ET5-3}
\boldsymbol{\omega}_{\theta}
=\boldsymbol{q}\delta\quad\text{in}\ \Omega_{1,\theta}
\end{equation}
and
\begin{equation}\label{ET5-4}
\boldsymbol{\omega}_{\theta}(\mathcal{I}+\theta)
=\boldsymbol{q}\delta\quad \text{in}\ \Omega_{1}.
\end{equation}

Since $\mathcal{A}$ is a linear and continuous operator from $H({\rm curl},
\Omega_{1})=\{\boldsymbol u\in L^2(\Omega_1)^3: \nabla\times\boldsymbol u\in
L^2(\Omega_1)^3\}$ into $\mathcal{D}'(\Omega_{1})$,
$\mathcal{A}$ is differentiable in the distribution sense, i.e.,
$\boldsymbol{\upsilon}\mapsto\langle\mathcal{A}\boldsymbol{\upsilon},
\boldsymbol{\psi}\rangle$ is differentiable for each
$\boldsymbol{\psi}\in\mathcal{D}(\Omega_{1})$ and
\begin{equation}\label{ET5-5}
\frac{\partial \mathcal{A}}{\partial\boldsymbol{\upsilon}}=\mathcal{A}.
\end{equation}
Here $\mathcal{D}(\Omega_{1})$ is the standard space of infinitely
differentiable functions with compact support in $\Omega_{1}$
and $\mathcal{D}'(\Omega_{1})$ is the standard space of distributions.
Therefore, it follows from the differentiability of
$\theta\mapsto\ \boldsymbol{E}_{1,\theta}(\mathcal{I}+\theta)$
and $\theta\mapsto\ \boldsymbol{E}_{1,\theta}$ that
$\theta\mapsto\ \boldsymbol{\omega}_{\theta}(\mathcal{I}+\theta)$
is continuously Fr\'{e}chet differentiable at $\theta=0$
in the direction $\boldsymbol{p}\in C^{2}(\Gamma\cup S, \mathbb{R}^{3})$.
Moreover, for an admissible perturbation $\theta$, their derivatives satisfy
\begin{equation}\label{ET5-6}
\frac{\partial}{\partial
\theta}(\boldsymbol{\omega}_{\theta}(\mathcal{I}+\theta))(0)\boldsymbol{p}
=\frac{\partial\boldsymbol{\omega}_{\theta}}{\partial \theta}(0)\boldsymbol{p}
+(\boldsymbol{p}\cdot\nabla)\boldsymbol{\omega}\quad \text{in}\ \Omega_{1}.
\end{equation}
We deduce from \eqref{ET5-2}--\eqref{ET5-4} and \eqref{ET5-6} that
\begin{align}\label{ET5-7}
\frac{\partial\boldsymbol{\omega}_{\theta}}{\partial \theta}(0)\boldsymbol{p}
&=\frac{\partial \mathcal{A}}{\partial\boldsymbol{E}_{1, \theta}}
\frac{\partial\boldsymbol{E}_{1, \theta}}{\partial \theta}(0)\boldsymbol{p}
=\frac{\partial
\mathcal{A}}{\partial\boldsymbol{E}_{1}}\boldsymbol{E}'_{1}\nonumber\\
&=\frac{\partial}{\partial
\theta}(\boldsymbol{\omega}_{\theta}(\mathcal{I}+\theta))(0)\boldsymbol{p}
-(\boldsymbol{p}\cdot\nabla)\boldsymbol{\omega}\nonumber\\
&=(\boldsymbol{p}\cdot\nabla)\boldsymbol{q}\delta
-(\boldsymbol{p}\cdot\nabla)\boldsymbol{q}\delta=0\quad\text{in}\
\Omega_{1}.
\end{align}
It follows  from \eqref{ET5-5} and \eqref{ET5-7} that
\begin{equation*}\label{ET5-8}
\mathcal{A}\boldsymbol{E}'_{1}=\nabla\times(\nabla\times\boldsymbol{E}'_{1})
-\kappa_{1}^{2}\boldsymbol{E}'_{1}=0\quad \text{in}\ \Omega_{1}.
\end{equation*}
For the boundary condition, we may follow the same steps as those in
\cite{L-JDE12} and obtain
\begin{equation*}\label{ET5-10}
\boldsymbol{\nu}_{\Gamma}\times{\boldsymbol{E}'_{1}}
=\big[\boldsymbol{p}_{\Gamma_{\nu}}(\partial_{\boldsymbol{\nu}_{\Gamma}}
\boldsymbol{E}_{1,\Gamma_{\tau}})+
\boldsymbol{E}_{1,\Gamma_{\nu}}(\nabla_{\Gamma_{\tau}}\boldsymbol{p}_{\Gamma_{
\nu}})\big]
\times\boldsymbol{\nu}_{\Gamma}\quad \text{on}\ \Gamma.
\end{equation*}

Furthermore, for every perturbation $\theta\in C^{2}(\Gamma\cup S,
\mathbb{R}^{3})$,
the tangential traces of the electric fields are assumed to be continuous across
$S$, i.e.,
\begin{eqnarray}\label{ET5-11}
\boldsymbol{\nu}_{\theta}\times\boldsymbol{E}_{1,\theta}
=\boldsymbol{\nu}_{\theta}\times\boldsymbol{E}_{2,\theta}\quad\text{on
}\ S_{\theta}.
\end{eqnarray}
Hence, we have
\begin{eqnarray}\label{ET5-12}
[\boldsymbol{\nu}_{\theta}(\mathcal{I}+\theta)]\times[\boldsymbol{E}_{1,
\theta}(\mathcal{I}+\theta)]
=[\boldsymbol{\nu}_{\theta}(\mathcal{I}+\theta)]\times[\boldsymbol{E}_{2,
\theta}(\mathcal{I}+\theta)]\quad\text{on}\ S.
\end{eqnarray}
Moreover, it follows from \cite[Lemma 3]{DF-JMAA99} and \cite[Lemma 4.8]{MS-74}
that
\begin{eqnarray}\label{ET5-13}
\boldsymbol{\nu}_{\theta}(\mathcal{I}+\theta)
=\frac{1}{\|g(\theta)\boldsymbol{\nu}_{S}\|_{L^{2}(S)}}g(\theta)\boldsymbol{\nu}
_{S}\quad\text{on}\ S,
\end{eqnarray}
where the matrix $g(\theta)=(\boldsymbol{I}+\frac{\partial\theta}{\partial \boldsymbol{x}})^{-\top}$
satisfies
\[
g(\boldsymbol{0})=\boldsymbol{I},\quad
\frac{\partial g(\theta)}{\partial \theta}(0)\boldsymbol{p}
=-(\nabla\boldsymbol{p})^{\top}.
\]

By \eqref{ET5-12} and \eqref{ET5-13}, we have
\begin{eqnarray}\label{ET5-15}
[g(\theta)\boldsymbol{\nu}_{S}]\times[\boldsymbol{E}_{1,\theta}(\mathcal{I}
+\theta)]
=[g(\theta)\boldsymbol{\nu}_{S}]\times[\boldsymbol{E}_{2,\theta}(\mathcal{I}
+\theta)]\quad\text{on}\ S
\end{eqnarray}
and
\begin{align}\label{ET5-16}
&\frac{\partial}{\partial
\theta}\{[g(\theta)\boldsymbol{\nu}_{S}]\times[\boldsymbol{E}_{1,\theta}
(\mathcal{I}+\theta)]\}(0)\boldsymbol{p}\notag\\
&=\frac{\partial}{\partial
\theta}\{[g(\theta)\boldsymbol{\nu}_{S}]\times[\boldsymbol{E}_{2,\theta}
(\mathcal{I}+\theta)]\}(0)\boldsymbol{p}\quad\text{on}\ S.
\end{align}

Using the chain rule, we deduce from \eqref{ET5-16} that
\begin{align}\label{ET5-17}
&\frac{\partial}{\partial
\theta}\{[g(\theta)\boldsymbol{\nu}_{S}]\times[\boldsymbol{E}_{j,\theta}
(\mathcal{I}+\theta)]\}(0)\boldsymbol{p}\nonumber\\
&=\bigg[\left(\frac{\partial g(\theta)}{\partial
\theta}(0)\boldsymbol{p}\right)\boldsymbol{\nu}_{S}\bigg]\times\boldsymbol{E}_{j
}
+\boldsymbol{\nu}_{S}\times\bigg[\frac{\partial}{\partial
\theta}(\boldsymbol{E}_{j,
\theta}(\mathcal{I}+\theta))(0)\boldsymbol{p}\bigg]\nonumber\\
&=-((\nabla\boldsymbol{p})^{\top}\boldsymbol{\nu}_{S})\times\boldsymbol{E}_{j}
+\boldsymbol{\nu}_{S}\times[\boldsymbol{E}'_{j}+(\boldsymbol{p}
\cdot\nabla)\boldsymbol{E}_{j}]
\quad\text{on}\ S,\ \ \ j=1,2.
\end{align}
Since on $S$ we have
\begin{align}\label{ET5-18}
((\nabla\boldsymbol{p})^{\top}\boldsymbol{\nu}_{S})\times\boldsymbol{E}_{j}
&=[\boldsymbol{\nu}_{S}\times(\nabla\times\boldsymbol{p})
+(\boldsymbol{\nu}_{S}\cdot\nabla)\boldsymbol{p}]\times\boldsymbol{E}_{j}
\nonumber\\
&=[\boldsymbol{\nu}_{S}\times(\nabla\times\boldsymbol{p})]\times\boldsymbol{E}_{
j}
+[(\boldsymbol{\nu}_{S}\cdot\nabla)\boldsymbol{p}]\times\boldsymbol{E}_{j}
\nonumber\\
&=-\boldsymbol{\nu}_{S}\times[\boldsymbol{E}_{j}\times(\nabla\times\boldsymbol{p
})]
-(\nabla\times\boldsymbol{p})\times(\boldsymbol{\nu}_{S}\times\boldsymbol{E}_{j}
)
+[(\boldsymbol{\nu}_{S}\cdot\nabla)\boldsymbol{p}]\times\boldsymbol{E}_{j}
\nonumber\\
&=-\boldsymbol{\nu}_{S}\times[\boldsymbol{E}_{j}\times(\nabla\times\boldsymbol{p
})]
-\boldsymbol{\nu}_{S}\times[(\boldsymbol{E}_{j}\cdot\nabla)\boldsymbol{p}]
\nonumber\\
&\quad-(\nabla\boldsymbol{p})(\boldsymbol{\nu}_{S}\times\boldsymbol{E}_{j})
+(\nabla\cdot\boldsymbol{p})(\boldsymbol{\nu}_{S}\times\boldsymbol{E}_{j})
,\quad j=1,2.
\end{align}
With the aid of \eqref{ET5-17} and \eqref{ET5-18}, we obtain
\begin{align}\label{ET5-19}
&\frac{\partial}{\partial \theta}\{[g(\theta)\boldsymbol{\nu}_{S}]\times[
\boldsymbol{E}_{j,\theta} (\mathcal{I}+\theta)]\}(0)\boldsymbol{p}\nonumber\\
&=-\{-\boldsymbol{\nu}_{S}\times[\boldsymbol{E}_{j}
\times(\nabla\times\boldsymbol{p})]
-\boldsymbol{\nu}_{S}\times[(\boldsymbol{E}_{ j}\cdot\nabla)\boldsymbol{p}]
-(\nabla\boldsymbol{p})(\boldsymbol{\nu}_{S}\times\boldsymbol{E}_{j})\notag\\
&\qquad+(\nabla\cdot\boldsymbol{p})(\boldsymbol{\nu}_{S}\times\boldsymbol{E}_{j}
)\}+\boldsymbol{\nu}_{S}\times\boldsymbol{E}'_{j}+\boldsymbol{\nu}_{S}\times
[(\boldsymbol{p}\cdot\nabla)\boldsymbol{E}_{j}]\nonumber\\
&=\{\boldsymbol{\nu}_{S}\times[\boldsymbol{E}_{j}\times(\nabla\times\boldsymbol{
p})]
+\boldsymbol{\nu}_{S}\times[(\boldsymbol{E}_{j}\cdot\nabla)\boldsymbol{p}]
+\boldsymbol{\nu}_{S}\times[(\boldsymbol{p}\cdot\nabla)\boldsymbol{E}_{j}]\}
\nonumber\\
&\qquad+\boldsymbol{\nu}_{S}\times\boldsymbol{E}'_{j}
+(\nabla\boldsymbol{p})(\boldsymbol{\nu}_{S}\times\boldsymbol{E}_{j})
-(\nabla\cdot\boldsymbol{p})(\boldsymbol{\nu}_{S}\times\boldsymbol{E}_{j}
)\nonumber\\
&=\boldsymbol{\nu}_{S}\times[\boldsymbol{E}_{j}\times(\nabla\times\boldsymbol{p}
)+(\boldsymbol{E}_{j}\cdot\nabla)\boldsymbol{p}
+(\boldsymbol{p}\cdot\nabla)\boldsymbol{E}_{j}]\nonumber\\
&\qquad+\boldsymbol{\nu}_{S}\times\boldsymbol{E}'_{j}
+(\nabla\boldsymbol{p})(\boldsymbol{\nu}_{S}\times\boldsymbol{E}_{j})
-(\nabla\cdot\boldsymbol{p})(\boldsymbol{\nu}_{S}\times\boldsymbol{E}_{j}
)\nonumber\\
&=\boldsymbol{\nu}_{S}\times[(\nabla\times\boldsymbol{E}_{j})\times\boldsymbol{p
}]+\boldsymbol{\nu}_{S}\times[\boldsymbol{p}
\times(\nabla\times\boldsymbol{E}_{j})
+\boldsymbol{E}_{j}\times(\nabla\times\boldsymbol{p})+(\boldsymbol{E}_{j
}\cdot\nabla)\boldsymbol{p}\notag\\
&\qquad+(\boldsymbol{p}\cdot\nabla)\boldsymbol{E}_{j}]+\boldsymbol{\nu}_{S}
\times\boldsymbol{E}'_{j}
+(\nabla\boldsymbol{p})(\boldsymbol{\nu}_{S}\times\boldsymbol{E}_{j})
-(\nabla\cdot\boldsymbol{p})(\boldsymbol{\nu}_{S}\times\boldsymbol{E}_{j}
)\nonumber\\
&=\boldsymbol{\nu}_{S}\times[(\nabla\times\boldsymbol{E}_{j})\times\boldsymbol{p
}]+\boldsymbol{\nu}_{S}\times[\nabla(\boldsymbol{p}
\cdot\boldsymbol{E}_{j})]
+\boldsymbol{\nu}_{S}\times\boldsymbol{E}'_{j}\notag\\
&\qquad+(\nabla\boldsymbol{p})(\boldsymbol{\nu}_{S}\times\boldsymbol{E}_{j})
-(\nabla\cdot\boldsymbol{p})(\boldsymbol{\nu}_{S}\times\boldsymbol{E}_{j}
)\nonumber\\
&=\mathrm{i}\omega[\boldsymbol{\nu}_{S}\times((\mu\boldsymbol{H}_{j}
)\times\boldsymbol{p})]
+[\boldsymbol{\nu}_{S}\times(\nabla(\boldsymbol{p}\cdot\boldsymbol{E}_{j}))]
\nonumber\\
&\qquad+\boldsymbol{\nu}_{S}\times\boldsymbol{E}'_{j}
+(\nabla\boldsymbol{p})(\boldsymbol{\nu}_{S}\times\boldsymbol{E}_{j})
-(\nabla\cdot\boldsymbol{p})(\boldsymbol{\nu}_{S}\times\boldsymbol{E}_{j})
\quad\text{on}\ S,\ j=1,2.
\end{align}

By taking into account of the continuous conditions \eqref{EBCS} and
$\boldsymbol{p}\in C^{2}(\Gamma\cup S, \mathbb{R}^{3})$,
from \eqref{ET5-23} and \eqref{ET5-20}, the jump relations read
\begin{align}\label{ET5-20}
[\boldsymbol{\nu}_{S}\times\boldsymbol{E}']
=-\mathrm{i}\omega[\boldsymbol{\nu}_{S}\times((\mu\boldsymbol{H}
)\times\boldsymbol{p})]
-[\boldsymbol{\nu}_{S}\times(\nabla(\boldsymbol{p}\cdot\boldsymbol{E}))].
\end{align}

For the first term of in the right hand side of \eqref{ET5-20}, we conclude
from the jump condition $[\mu\boldsymbol{H}_{S_{\nu}}]=0$ that
\begin{align}\label{ET5-21}
\mathrm{i}\omega[\boldsymbol{\nu}_{S}\times((\mu\boldsymbol{H})\times\boldsymbol
{p})]
&=\mathrm{i}\omega[(\mu\boldsymbol{H})(\boldsymbol{\nu}_{S}\cdot\boldsymbol{p})
-\boldsymbol{p}(\boldsymbol{\nu}_{S}\cdot(\mu\boldsymbol{H}))]\nonumber\\
&=\mathrm{i}\omega[\mu(\boldsymbol{H}_{S_{\tau}}+\boldsymbol{H}_{S_{\nu}}
\boldsymbol{\nu}_{S})\boldsymbol{p}_{S_{\nu}}
-(\boldsymbol{p}_{S_{\tau}}+\boldsymbol{p}_{S_{\nu}}\boldsymbol{\nu}_{S}
)(\mu\boldsymbol{H}_{S_{\nu}})]\nonumber\\
&=\mathrm{i}\omega[\mu\boldsymbol{H}_{S_{\tau}}\boldsymbol{p}_{S_{\nu}}
-\mu\boldsymbol{H}_{S_{\nu}}\boldsymbol{p}_{S_{\tau}}]\nonumber\\
&=\mathrm{i}\omega[\mu\boldsymbol{H}_{S_{\tau}}]\boldsymbol{p}_{S_{\nu}}
-\mathrm{i}\omega[\mu\boldsymbol{H}_{S_{\nu}}]\boldsymbol{p}_{S_{\tau}}
\nonumber\\
&=\mathrm{i}\omega[\mu\boldsymbol{H}_{S_{\tau}}]\boldsymbol{p}_{S_{\nu}}
\quad\text{on}\ S.
\end{align}
It follows from
$[\boldsymbol{\nu}_{S}\times\boldsymbol{E}]=[\boldsymbol{\nu}_{S}
\times\boldsymbol{E}_{S_{\tau}}]=0$
and the definition of the surface gradient $\nabla_{S_{\tau}}$ that
we obtain
$[\boldsymbol{\nu}_{S}\times(\nabla_{S_{\tau}}(\boldsymbol{p}_{S_{\tau}}
\cdot\boldsymbol{E}_{S_{\tau}}))]=0$.
Thus, the second term in the right hand side of \eqref{ET5-20} reduces to
\begin{align}\label{ET5-22}
[\boldsymbol{\nu}_{S}\times(\nabla(\boldsymbol{p}\cdot\boldsymbol{E}))]
&=[\boldsymbol{\nu}_{S}\times(\nabla_{S_{\tau}}(\boldsymbol{p}\cdot\boldsymbol{E
}))]\nonumber\\
&=[\boldsymbol{\nu}_{S}\times(\nabla_{S_{\tau}}((\boldsymbol{p}_{S_{\tau}}
+\boldsymbol{p}_{S_{\nu}}\boldsymbol{\nu}_{S})
\cdot(\boldsymbol{E}_{S_{\tau}}+\boldsymbol{E}_{S_{\nu}}\boldsymbol{\nu}_{S})))]\nonumber\\
&=[\boldsymbol{\nu}_{S}\times(\nabla_{S_{\tau}}(\boldsymbol{p}_{S_{\tau}}
\cdot\boldsymbol{E}_{S_{\tau}}
+\boldsymbol{p}_{S_{\nu}}\boldsymbol{E}_{S_{\nu}}))]\nonumber\\
&=[\boldsymbol{\nu}_{S}\times(\nabla_{S_{\tau}}(\boldsymbol{p}_{S_{\nu}}
\boldsymbol{E}_{S_{\nu}}))]
\quad \text{on}\ S.
\end{align}
Finally, by \eqref{ET5-20}--\eqref{ET5-22}, we have the boundary condition
\begin{eqnarray*}\label{ET5-24}
[\boldsymbol{\nu}_{S}\times\boldsymbol{E}']
=-\mathrm{i}\omega[\mu\boldsymbol{H}_{S_{\tau}}]\boldsymbol{p}_{S_{\nu}}
-[\boldsymbol{\nu}_{S}\times(\nabla_{S_{\tau}}(\boldsymbol{p}_{S_{\nu}}
\boldsymbol{E}_{S_{\nu}}))]\quad \text{on}\ S.
\end{eqnarray*}
Similarly, we can obtain
\begin{eqnarray*}\label{ET5-25}
[\boldsymbol{\nu}_{S}\times\boldsymbol{H}']
=\mathrm{i}\omega[(\varepsilon+\mathrm{i}\frac{\sigma}{\omega})\boldsymbol{E}_{
S_{\tau}}]\boldsymbol{p}_{S_{\nu}}
-[\boldsymbol{\nu}_{S}\times(\nabla_{S_{\tau}}(\boldsymbol{p}_{S_{\nu}}
\boldsymbol{H}_{S_{\nu}}))]\quad \text{on}\ S.
\end{eqnarray*}

Based on the existence of the domain derivatives $\boldsymbol{E}'_{j}$,
the proof of the the integral representations for $\boldsymbol{E}'_{j}$
follow in the same manner as for the the integral representation of
$\boldsymbol{E}_{j}$. Therefore, the asymptotic behavior to the domain
derivative $\boldsymbol{E}'_{j}$ has the same form as
$\boldsymbol{E}_{j}$. This means that the domain derivatives
$(\boldsymbol{E}'_1, \boldsymbol{E}'_2)$ are the radiation solutions of the
problem \eqref{E1F}.
\end{proof}

Introduce the domain $\Omega_{1,h}$ bounded by $\Gamma_{h}$ and $S_{h}$,
where
\begin{eqnarray*}\label{GaSh}
\Gamma_{h}=\{\boldsymbol{x}+hp(\boldsymbol{x})\boldsymbol{\nu}_{\Gamma}
:\boldsymbol{x}\in \Gamma\},\quad
S_{h}=\{\boldsymbol{x}+hp(\boldsymbol{x})\boldsymbol{\nu}_{S}:\boldsymbol{x}\in
S\}.
\end{eqnarray*}
where $p\in C^{2}(\mathbb{R}^{3}, \mathbb{R})$ and $h>0$.
For any two domains $\Omega_{1}$ and $\Omega_{1,h}$ in $\mathbb{R}^{3}$, define
the Hausdorff distance
\begin{eqnarray*}\label{Om12}
\mathrm{dist }(\Omega_{1} ,\Omega_{1,h})=\max\{\rho(\Omega_{1,h} ,\Omega_{1}),
\rho(\Omega_{1} ,\Omega_{1,h})\},
\end{eqnarray*}
where
\begin{eqnarray*}
\rho(\Omega_{1}
,\Omega_{1,h})=\sup_{\boldsymbol{x}\in\Omega_{1}}\inf_{\boldsymbol{y}\in\Omega_{
1,h}}|\boldsymbol{x}-\boldsymbol{y}|.
\end{eqnarray*}

It can be easily seen that the Hausdorff distance between $\Omega_{1,h}$ and
$\Omega_{1}$ is of the order $h$,
i.e., $\mathrm{dist} (\Omega_{1} ,\Omega_{1,h}) =\mathcal O(h)$.
We have the following local stability result.

\begin{theorem}\label{ET6}
If $p\in C^{2}(\Gamma\cup S, \mathbb{R})$ and $h > 0$ is sufficiently small,
then
\begin{eqnarray*}\label{dist}
\mathrm{dist }(\Omega_{1} ,\Omega_{1,h})\leq
C\|
\boldsymbol{\nu}_{\Gamma_{H}}\times\boldsymbol{E}_{1,h}-\boldsymbol{\nu}_{
\Gamma_{H}}\times \boldsymbol{E}_{1}\|_{C^{0,\alpha}(\Gamma_{H})},
\end{eqnarray*}
where $\boldsymbol{E}_{1,h}$ and $\boldsymbol{E}_{1}$ is the solution of
Problem \ref{SP} corresponding to the domain $\Omega_{1, h}$ and $\Omega_1$,
respectively, and $C$ is a positive constant independent of $h$.
\end{theorem}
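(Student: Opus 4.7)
The plan is to linearize the forward map at the reference configuration, show that the linearization is injective, upgrade this to a quantitative lower bound, and combine with a first-order Taylor expansion of the boundary data. Throughout I specialize the direction of perturbation in Theorem \ref{ET5} to $\boldsymbol{p} = p\,\boldsymbol{\nu}$ (so that $\boldsymbol{p}_{\Gamma_\nu} = \boldsymbol{p}_{S_\nu} = p$ and $\boldsymbol{p}_{\Gamma_\tau} = \boldsymbol{p}_{S_\tau} = 0$), which is exactly the situation defining $\Omega_{1,h}$.

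\textbf{Step 1 (Taylor expansion of the measurement).} From the Fr\'echet differentiability of $\theta \mapsto \boldsymbol{E}_{1,\theta}$ established in Theorem \ref{ET5}, evaluated at $\theta = hp\boldsymbol{\nu}$, one gets
\begin{equation*}
\boldsymbol{\nu}_{\Gamma_{H}}\times\bigl(\boldsymbol{E}_{1,h}-\boldsymbol{E}_{1}\bigr)\big|_{\Gamma_{H}}
= h\,\boldsymbol{\nu}_{\Gamma_{H}}\times\boldsymbol{E}'_{1}\big|_{\Gamma_{H}} + \boldsymbol{R}_{h},
\qquad \|\boldsymbol{R}_{h}\|_{C^{0,\alpha}(\Gamma_{H})} = o(h),
\end{equation*}
where $\boldsymbol{E}'_{1}$ solves the linearized problem \eqref{E1F} with the specific boundary data corresponding to $\boldsymbol{p} = p\,\boldsymbol{\nu}$. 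A straightforward geometric argument also gives $\mathrm{dist}(\Omega_{1},\Omega_{1,h}) \le h\|p\|_\infty$, so the goal reduces to a bound of the form $h\|p\|_{C^{0,\alpha}(\Gamma\cup S)} \le C\,\|\boldsymbol{\nu}_{\Gamma_{H}}\times\boldsymbol{E}'_{1}\|_{C^{0,\alpha}(\Gamma_{H})}$.

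\textbf{Step 2 (Injectivity of the linearized map).} Define $\mathcal{L}\colon C^{0,\alpha}(\Gamma\cup S)\to C^{0,\alpha}(\Gamma_{H})$ by $\mathcal{L}(p)=\boldsymbol{\nu}_{\Gamma_{H}}\times\boldsymbol{E}'_{1}|_{\Gamma_{H}}$, with $\boldsymbol{E}'_{1}$ supplied by Theorem \ref{ET5}. I would first check that $\ker\mathcal{L}=\{0\}$: if $\mathcal{L}(p)=0$, then $\boldsymbol{E}'_{1}$ is a radiating solution of the homogeneous Maxwell system in the half space $\{x_{3}>H\}\subset\Omega_{1}$ with vanishing tangential trace on the plane $\Gamma_{H}$, so Theorem \ref{ET3} (applied on this half space) combined with unique continuation across $\Gamma_{H}$ and the analyticity of $\boldsymbol{E}'_{j}$ in $\Omega_{j}$ forces $\boldsymbol{E}'_{1}\equiv 0$ in $\Omega_{1}$. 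The third line of \eqref{E1F} then reduces to $p\,\partial_{\boldsymbol{\nu}_{\Gamma}}\boldsymbol{E}_{1,\Gamma_{\tau}}+\boldsymbol{E}_{1,\Gamma_{\nu}}\,\nabla_{\Gamma_{\tau}}p\equiv 0$ on $\Gamma$, a first-order linear PDE whose coefficients are generated by a nontrivial dipole incident field and therefore do not vanish on any open piece of $\Gamma$; this forces $p\equiv 0$ on $\Gamma$. An analogous use of the jump conditions on $S$ gives $p\equiv 0$ on $S$, so $\mathcal{L}$ is injective.

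\textbf{Step 3 (Quantitative lower bound for $\mathcal{L}$ and conclusion).} To upgrade injectivity to $\|p\|_{C^{0,\alpha}(\Gamma\cup S)}\le C\|\mathcal{L}(p)\|_{C^{0,\alpha}(\Gamma_{H})}$, I would decompose $\mathcal{L}$, via the boundary integral representation used in Theorems \ref{ET1}--\ref{ET2} applied to $\boldsymbol{E}'_{1}$, as $\mathcal{L}=\mathcal{L}_{0}+\mathcal{K}$, where $\mathcal{L}_{0}$ is the ``principal'' part coming from the identity $\tfrac{1}{2}$-jump together with the Cauchy data $p\partial_{\boldsymbol{\nu}}\boldsymbol{E}_{1,\tau}+\boldsymbol{E}_{1,\nu}\nabla_{\tau}p$ and $[\mu\boldsymbol{H}_{S_{\tau}}]p$, and $\mathcal{K}$ is compact by the argument used in Lemma \ref{EL01}. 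Invertibility of $\mathcal{L}_{0}$ on the relevant finite-codimension subspace, together with the Fredholm alternative and the injectivity of Step 2, then yields the required stability constant $C$. Finally, substituting into Step 1 and absorbing the $o(h)$ remainder into the left-hand side for $h$ sufficiently small gives
\begin{equation*}
h\,\|p\|_{C^{0,\alpha}(\Gamma\cup S)} \le C\,\bigl\|\boldsymbol{\nu}_{\Gamma_{H}}\times(\boldsymbol{E}_{1,h}-\boldsymbol{E}_{1})\bigr\|_{C^{0,\alpha}(\Gamma_{H})},
\end{equation*}
which together with $\mathrm{dist}(\Omega_{1},\Omega_{1,h})\le h\|p\|_\infty$ is exactly the claim.

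\textbf{Main obstacle.} Step 2 is accessible through the uniqueness machinery already set up in the paper, but Step 3 is the delicate one. The simultaneous presence of the bounded obstacle $\Gamma$ and the unbounded rough surface $S$ means that the natural splitting $\mathcal{L}=\mathcal{L}_{0}+\mathcal{K}$ is not as clean as in the classical bounded-obstacle case of \cite{L-JDE12}: both the coefficient $\boldsymbol{E}_{1,\Gamma_{\nu}}$ appearing in the $\Gamma$-condition of \eqref{E1F} and the jump coefficient $[\mu\boldsymbol{H}_{S_{\tau}}]$ on $S$ must be controlled uniformly along $S$, and the compactness estimate of Lemma \ref{EL01} has to be combined with sharp regularity estimates for $\boldsymbol{E}'_{1}$ up to $\Gamma_{H}$. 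Making this decomposition rigorous is where the bulk of the work lies.
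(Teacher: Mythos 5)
Your Steps 1 and 2 are essentially the paper's proof in disguise: the paper argues by contradiction that if the estimate fails along a subsequence then $\|\boldsymbol{\nu}_{\Gamma_{H}}\times\boldsymbol{E}'_{1}\|_{C^{0,\alpha}(\Gamma_{H})}=0$, invokes uniqueness for the linearized problem \eqref{E1F} to get $\boldsymbol{E}'_{j}\equiv 0$, and then uses the boundary condition on $\Gamma$ together with unique continuation to contradict the presence of the dipole source. The genuine gap is your Step 3, which is both unnecessary and, as formulated, unattainable. It is unnecessary because the theorem only asserts a constant $C$ independent of $h$ for a \emph{fixed} direction $p$; once Step 2 gives $\boldsymbol{\nu}_{\Gamma_{H}}\times\boldsymbol{E}'_{1}|_{\Gamma_{H}}\neq 0$ for the given nonzero $p$, the Taylor expansion of Step 1 already yields the claim with $C=2\|p\|_{\infty}/\|\boldsymbol{\nu}_{\Gamma_{H}}\times\boldsymbol{E}'_{1}\|_{C^{0,\alpha}(\Gamma_{H})}$ after absorbing the $o(h)$ remainder; no bound uniform in $p$ is required. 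It is unattainable because $\Gamma_{H}$ lies at a positive distance from $\Gamma\cup S$, so $\boldsymbol{E}'_{1}$ is analytic in a neighborhood of $\Gamma_{H}$ and the map $\mathcal{L}\colon p\mapsto\boldsymbol{\nu}_{\Gamma_{H}}\times\boldsymbol{E}'_{1}|_{\Gamma_{H}}$ is infinitely smoothing, hence compact from $C^{0,\alpha}(\Gamma\cup S)$ to $C^{0,\alpha}(\Gamma_{H})$. An injective compact operator on an infinite-dimensional space admits no estimate $\|p\|\leq C\|\mathcal{L}p\|$, so the proposed splitting $\mathcal{L}=\mathcal{L}_{0}+\mathcal{K}$ with $\mathcal{L}_{0}$ boundedly invertible modulo finite dimensions cannot exist: the ``principal part'' you would extract is itself compact. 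This is the usual ill-posedness of the linearized inverse problem, and the Fredholm alternative cannot circumvent it.

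A secondary weakness is the last implication of Step 2: you assert that $p\,\partial_{\boldsymbol{\nu}_{\Gamma}}\boldsymbol{E}_{1,\Gamma_{\tau}}+\boldsymbol{E}_{1,\Gamma_{\nu}}\nabla_{\Gamma_{\tau}}p\equiv 0$ on $\Gamma$ forces $p\equiv 0$ because the coefficients do not vanish on any open piece of $\Gamma$. That nonvanishing claim is itself unproved, and even granting it, the relation is a first-order equation in $p$ along $\Gamma$ which in general admits nontrivial solutions. The paper reaches the contradiction differently: it uses the freedom in the perturbation to conclude that both coefficients vanish, i.e. $\partial_{\boldsymbol{\nu}_{\Gamma}}\boldsymbol{E}_{1}=0$ and $\boldsymbol{\nu}_{\Gamma}\cdot\boldsymbol{E}_{1}=0$ on $\Gamma$, which together with $\boldsymbol{\nu}_{\Gamma}\times\boldsymbol{E}_{1}=0$ gives vanishing Cauchy data for $\boldsymbol{E}_{1}$ on $\Gamma$ and hence, by unique continuation, $\boldsymbol{E}_{1}\equiv 0$ in $\Omega_{1}$, contradicting the inhomogeneous source term. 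In other words, the contradiction is obtained at the level of the total field $\boldsymbol{E}_{1}$, not by solving for $p$; if you retain your formulation, you must supply the missing argument that the first-order relation has only the trivial solution.
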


\begin{proof}
Assume by contradiction that there exists a subsequence from
$\{\boldsymbol{E}_{1,h}\}$, which is still denoted as $\{\boldsymbol{E}_{1,h}\}$
for simplicity, such that
\begin{eqnarray*}
\lim_{h\to 0}\bigg\|
\frac{\boldsymbol{\nu}_{\Gamma_{H}}\times\boldsymbol{E}_{1,h}-\boldsymbol{\nu}_{
\Gamma_{H}}\times \boldsymbol{E}_{1}}{h}\bigg\|_{C^{0,\alpha}(\Gamma_{H})}
=\|\boldsymbol{\nu}_{\Gamma_{H}}\times\boldsymbol{E}'_{1}\|_{C^{0,\alpha}
(\Gamma_{H})} =0\quad \text{as}\ h\to 0,
\end{eqnarray*}
which yields $\boldsymbol{\nu}_{\Gamma_{H}}\times\boldsymbol{E}'_{1}=0$ on
$\Gamma_{H}$.
Following a similar proof of Theorem \ref{ET3}, we can show the uniqueness of
the solution for problem \eqref{E1F}.
An application of the uniqueness for problem \eqref{E1F} yields that
$\boldsymbol{E}'_{j}= 0$ in $\Omega_{j}, j=1, 2$.
Noting  the boundary condition of $\boldsymbol{E}'_{1}$ in problem \eqref{E1F}
gives
\begin{align}\label{ET6-2}
\boldsymbol{\nu}_{\Gamma}\times{\boldsymbol{E}'_{1}}
&=[(p(\boldsymbol{x})\boldsymbol{\nu}_{\Gamma})_{\Gamma_{\nu}}(\partial_{
\boldsymbol{\nu}_{\Gamma}}\boldsymbol{E}_{1,\Gamma_{\tau}})+
\boldsymbol{E}_{1,\Gamma_{\nu}}(\nabla_{\Gamma_{\tau}}(p(\boldsymbol{x}
)\boldsymbol{\nu}_{\Gamma})_{\Gamma_{\nu}})]\times\boldsymbol{\nu}_{\Gamma}
\nonumber\\
&=[p(\partial_{\boldsymbol{\nu}_{\Gamma}}\boldsymbol{E}_{1,\Gamma_{\tau}})+
\boldsymbol{E}_{1,\Gamma_{\nu}}(\nabla_{\Gamma_{\tau}}p)]\times\boldsymbol{\nu}
_{\Gamma}=0
\quad \text{on}\ \Gamma.
\end{align}
Since $p$ is arbitrary in \eqref{ET6-2}, we have
\begin{align}\label{ET6-3}
\partial_{\boldsymbol{\nu}_{\Gamma}}\boldsymbol{E}_{1,\Gamma_{\tau}}
&=\partial_{\boldsymbol{\nu}_{\Gamma}}[\boldsymbol{\nu}_{\Gamma}
\times(\boldsymbol{E}_{1}\times\boldsymbol{\nu}_{\Gamma})]\nonumber\\
&=\partial_{\boldsymbol{\nu}_{\Gamma}}\boldsymbol{E}_{1}
-\partial_{\boldsymbol{\nu}_{\Gamma}}[(\boldsymbol{\nu}_{
\Gamma}\cdot\boldsymbol{E}_{1})\boldsymbol{\nu}_{\Gamma}] =0\quad\text{on}\
\Gamma
\end{align}
and
\begin{eqnarray}\label{ET6-4}
\boldsymbol{E}_{1,\Gamma_{\nu}}=\boldsymbol{\nu}_{\Gamma}\cdot\boldsymbol{E}_{1}
=0
\quad \text{on}\ \Gamma.
\end{eqnarray}
It follows from \eqref{ET6-3} and \eqref{ET6-4} that
\begin{eqnarray}\label{ET6-5}
\partial_{\boldsymbol{\nu}_{\Gamma}}\boldsymbol{E}_{1}=0\quad
\text{on}\ \Gamma.
\end{eqnarray}
With the aid of $\boldsymbol{\nu}_{\Gamma}\times\boldsymbol{E}_{1}|_{\Gamma} =0$
and $\boldsymbol{\nu}_{\Gamma}\cdot\boldsymbol{E}_{1}|_{\Gamma}=0$,
we have
\begin{eqnarray}\label{ET6-6}
\boldsymbol{E}_{1} =0\quad \text{on}\ \Gamma.
\end{eqnarray}
Therefore, combining \eqref{ET6-5} and \eqref{ET6-6}, we infer by unique
continuation that
 \begin{eqnarray*}\label{ET6-7}
\boldsymbol{E}_{1} =0 \quad \text{in}\ \Omega_{1},
\end{eqnarray*}
which is a contradiction to the
 \begin{eqnarray*}\label{ET6-8}
\nabla\times(\nabla\times\boldsymbol{E}_{1})-\kappa_{1}^{2}\boldsymbol{E}_{1}
=\mathrm{i}\omega\mu_{1}\boldsymbol{J}_{cs}\neq0 \quad\text{in}\
\Omega_{1}.
\end{eqnarray*}
The proof is completed.
\end{proof}

\section{Conclusion}\label{cl}

In this paper, we have studied the direct and inverse electromagnetic obstacle
scattering problems for the three-dimensional Maxwell equations in an unbounded
structure. We present an equivalent integral equation to the boundary value
problem and show that it has a unique solution. For the
inverse problem, we prove that the obstacle and unbounded rough surface can be
uniquely determined by the tangential component of the electric field measured
on the plane surface above the obstacle. The local stability shows
that the Hausdorff distance of the two regions, corresponding to small
perturbations of the obstacle and the unbounded rough surface, is bounded
by the distance of corresponding tangential trace of the electric fields if they
are close enough. To prove the stability, the domain derivative of the electric
field with respect to the change of the shape of the obstacle and
unbounded rough surface is examined. In particular, we deduce that the domain
derivative satisfies a boundary value problem of the Maxwell equations, which is
similar to the model equation of the direct problem.

\end{document}